\newtheorem{lemma}{Lemma}[section]
\newtheorem{corollary}[lemma]{Corollary}
\newtheorem{remark}[lemma]{Remark}
\newtheorem{notation}[lemma]{Notation}
\newtheorem{proposition}[lemma]{Proposition}
\newtheorem{theorem}[lemma]{Theorem}
\def\be{\begin{eqnarray}}
\def\ee{\end{eqnarray}}
\def\beal{\begin{aligned}}
\def\enal{\end{aligned}}
\newcommand{\eps}{\varepsilon}
\newcommand{\ga}{\gamma}
\newcommand{\RR}{\mathbb{R}}
\newcommand{\NN}{\mathbb{N}}
\newcommand{\CC}{\mathbb{C}}
\newcommand{\TT}{\mathbb{T}}
\newcommand{\ZZ}{\mathbb{Z}}
\newcommand{\PP}{\mathcal{P}}
\newcommand{\GG}{\mathcal{G}}
\newcommand{\BB}{\mathcal{B}}
\newcommand{\LL}{\mathcal{L}}
\newcommand{\SSS}{\mathcal{S}}
\newcommand{\XX}{\mathcal{X}}
\newcommand{\OO}{\mathcal{O}}
\newcommand{\FF}{\mathcal{F}}
\newcommand{\VV}{\mathcal{V}}
\newcommand{\UU}{\mathcal{U}}
\newcommand{\NNN}{\mathcal{N}}
\newcommand{\AAA}{\mathcal{A}}
\newcommand{\CCC}{\mathcal{C}}
\newcommand{\Id}{\mathrm{Id}}
\newcommand{\ii}{^{-1}}
\newcommand{\de}{\delta}
\newcommand{\pa}{\partial}
\newcommand{\al}{\alpha}
\newcommand{\rr}{\rho}
\newcommand{\ol}{\overline}
\newcommand{\La}{\Lambda}
\renewcommand{\Re}{\mathrm{Re\, }}
\newcommand{\wt}{\widetilde}
\newcommand{\wh}{\widehat}
\newcommand{\h}{\mathrm{h}}
\begin{document}

\title{Oscillatory orbits in the restricted
elliptic planar three body problem}

\author{Marcel Guardia\thanks{\tt marcel.guardia@upc.edu}, Pau Mart\'\i
n\thanks{\tt martin@ma4.upc.edu}, Lara Sabbagh\thanks{\tt
l.el-Sabbagh@warwick.ac.uk}\ and
Tere M. Seara\thanks{\tt tere.m-seara@upc.edu}}
\maketitle

\medskip
\begin{center}$^{* \S}$
Departament de Matem\`atica Aplicada I\\
Universitat Polit\`ecnica de Catalunya\\
Diagonal 647, 08028 Barcelona, Spain
\end{center}
\smallskip
\begin{center}$^\dagger$
Departament de Matem\`atica Aplicada IV\\
Universitat Polit\`ecnica de Catalunya\\
Campus Nord, Edifici C3, C. Jordi Girona, 1-3. 08034 Barcelona, Spain
\end{center}
\smallskip
\begin{center}$^\ddagger$
Mathematics Institute\\
Zeeman Building, University of Warwick\\
Coventry CV4 7AL
\end{center}

\begin{abstract}
The restricted planar elliptic three body problem  models the motion
of a massless body under the Newtonian gravitational force of  two
other bodies, the primaries, which evolve in Keplerian ellipses.

A trajectory  is called  oscillatory if it leaves every bounded
region but  returns
infinitely often to some fixed bounded region. We prove the existence of such
type of trajectories for any values for the masses of the primaries provided
the eccentricity of the Keplerian ellipses is small.

%We also provide to a framework
%which may lead to Arnold diffusion: orbits with
%arbitrary large (but finite) growth in angular momentum of the
%massless body.

%Even if both types of dynamics are apparently unrelated, they can be
%obtained by the same mechanism.
\end{abstract}

\tableofcontents

\section{Introduction}\label{sec:intro}
The restricted planar elliptic three body problem (RPE3BP from now on)
  models the motion
of a body of zero mass under the Newtonian gravitational force of  two
other bodies, the primaries, which evolve in
Keplerian ellipses with eccentricity $e_0$. Without loss of generality one can
assume that the masses of the primaries are  $\mu$ and $1-\mu$,
their period is $2\pi$ and their positions are $-\mu q_0(t)$
and
$(1-\mu)q_0(t)$, where
\[
 q_0(t)=(\rr(t)\cos v(t), \rr(t)\sin v(t))
\]
with
\[
 \rr(t)=\frac{1-e_0^2}{1+e_0\cos v(t)}
\]
and $v(t)$ is called the \emph{true anomaly}, which satisfies $v(0)=0$ and
\[
 \frac{dv}{dt}=\frac{(1+e_0\cos v)^2}{(1-e_0^2)^{3/2}}.
\]
Then, the motion of the third body is described by the following Hamiltonian
\begin{equation}\label{def:Ham:Cartesian}
  H(q,p,t;e_0)= \frac{\|p\|^2}{2} -  V(q,t;e_0)
\end{equation}
where
%\begin{equation}\label{def:PotentialCartesian}
\[
 V(q,t;e_0)= \frac{1-\mu}{\left\|q+\mu
      q_0(t)\right\|}+\frac{\mu}{\left\|q-(1-\mu)q_0(t)\right\|}
\]
  %    \end{equation}
where $q,p\in\RR^2$.

The eccentricity of the ellipses satisfies $e_0\in [0,1)$. If $e_0=0$, the
primaries describe circular orbits. This case is known as  the restricted planar
circular three body
problem (RPC3BP from now on). In this paper we consider $e_0>0$ and small. Note
that the Hamiltonian
also depends on $\mu\in[0,1/2]$. We do not write this dependence explicitly
since for us $\mu$ is a fixed positive parameter.

The purpose of this paper is to analyze some particular orbits of this
Hamiltonian system: the \emph{oscillatory motions},
that is, orbits which leave every bounded region but  return
infinitely often to some fixed bounded region. To prove the existence of such
orbits, we use the  framework usually considered in the study of Arnold
diffusion in nearly integrable Hamiltonian systems.

%COMENTARI DIFUSIO

%In this paper we also give
%a conditional statement concerning Arnold diffusion: provided a
%certain non-degeneracy condition
%is satisfied, there are orbits with
%arbitrary large (but finite) growth in angular momentum. Recall that the
%angular momentum is just the third
%component of $q\times
%\dot q$ (we consider the planar three body problem, and therefore $q$ and
%$\dot
%q$ lie on the same plane).

%Even if both
%types of dynamics are apparently unrelated,
%they can be obtained by similar mechanisms.

If one takes $\mu=0$, the system reduces to a central force
problem which is integrable and therefore cannot have oscillatory motions.
%. Neither oscillatory motions nor Arnold diffusion
%cannot take place in the integrable setting.
That is, they can take place
for the three body problem but not in its limit  $\mu\rightarrow
0$. In this paper we prove that oscillatory motions are possible for any
value of $\mu\in (0,1/2]$ and $e_0>0$ small enough.  The existence of such
motions for any $\mu$ and  $e_0=0$ (the circular problem) has been
recently proved by the authors of this paper in \cite{GuardiaMS14}.

To obtain oscillatory orbits we work in a  nearly integrable setting and we use
perturbative methods that provide
invariant objects which can be analyzed and act as  a skeleton
that such orbits follow. Nevertheless, since we give
results
for any $\mu\in (0,1/2]$, the nearly integrable setting cannot be in
terms
of  $\mu\rightarrow0$. Instead, we consider the following regime. Take
the body of
zero mass very far away from the two primaries. Then, at first order the
third body
perceives the two primaries as just one body at the center of mass plus a
periodic perturbation whose smallness comes from the ratio of the distance
between
the two primaries over the distance between the third body and the center of
mass. Then, taking the ratio of distances small enough,  one has an
integrable Hamiltonian plus a small periodic perturbation  (see
\cite{GuardiaMS14}).

\subsection{Final motions in the three body problem}
One of the most important questions in the analysis of the three body
problem (either restricted or non restricted, planar or spatial) is the study
of the
final motions. That is, what type of behaviors can happen as time $t\rightarrow
\pm \infty$. Its analysis was initiated by Chazy in 1922, when he gave a
complete classification of the possible final motions (see Section 2.4 of
\cite{ArnoldKN88}). In the restricted setting, the possible final motions are
the following:
\begin{itemize}
 \item $H^\pm$ (hyperbolic):  $\|q(t)\|\rightarrow\infty$ and $\|\dot
q(t)\|\rightarrow c>0$ as $t\rightarrow\pm\infty$.
\item $P^\pm$ (parabolic): $\|q(t)\|\rightarrow\infty$ and $\|\dot
q(t)\|\rightarrow 0$ as $t\rightarrow\pm\infty$.
\item $B^\pm$ (bounded): $\limsup_{t\rightarrow \pm\infty}\|q\|<+\infty$.
\item $OS^\pm$ (oscillatory): $\limsup_{t\rightarrow \pm\infty}\|q\|=+\infty$
and  $\liminf_{t\rightarrow \pm\infty}\|q\|<+\infty$.
\end{itemize}
Examples of all these behaviors, except the oscillatory motions,
were already known by Chazy. The other three behaviors certainly already exist
for the two body problem, where motion is confined to conics in the state space
and hyperbolic motions arises on hyperbolas, parabolic motions on parabolas and
bounded motion on ellipses. As already explained, oscillatory motions cannot
appear in the two body problem.

The study of oscillatory motions was initiated by Sitnikov in the sixties
\cite{Sitnikov60}. He proved their existence in a very carefully
chosen symmetrical model consisting of two bodies of equal mass
revolving in planar ellipses around their center of mass and a third
body of mass zero moving along the perpendicular axis at the center
of mass.  His work also proved   that $X^-\cap
Y^+\neq\emptyset$ for $X,Y=H,P,B,OS$.

Later, Moser~\cite{Moser01} gave a new proof of the Sitnikov result.
The subsequent  results in the area strongly rely on the ideas developed by
Moser. The present paper certainly also relies on some of them but
also uses other ideas recently developed for the study of Arnold
diffusion. Using
Moser ideas, Llibre and Sim\'{o}~\cite{LlibreS80} obtained
oscillatory motions for the collinear three body problem.

For the planar three body problem, the first result was also by Llibre and
Sim\'o \cite{SimoL80}, where they proved the existence of oscillatory motions
for the RPC3BP for
small enough values of the mass ratio $\mu$ (Hamiltonian
\eqref{def:Ham:Cartesian} with $e_0=0$). Using a real-analyticity argument, Xia
\cite{Xia92} extended their result to any $\mu \in (0, 1/2]$ except a finite
(unknown) number of values (Moser
\cite{Moser01} had previously used this argument in the Sitnikov problem).
 The first general result was obtained by three of the
authors of the present paper \cite{GuardiaMS14}, who proved the existence
of
oscillatory motions of the restricted planar circular three body problem for
any value of the mass ratio $\mu \in (0, 1/2]$.

A completely different approach using Aubry-Mather theory and semi-infinite
regions of instability was developed in \cite{GalanteK11,
GalanteK10b, GalanteK10c}. The authors considered the RPC3BP with a realistic
mass ratio for the Sun-Jupiter system. Using
computer-assisted methods, they proved the existence of orbits with
initial
conditions in the range of our Solar System which become oscillatory as time
tends to infinity.

All the mentioned results deal with systems which can be reduced to a two
dimensional area preserving map. This makes the proof of existence of
oscillatory motions considerably simpler than if one considers other three body
problems (restricted elliptic, restricted spatial, non-restricted) which have
more degrees of freedom. For higher dimensional systems, Moser ideas are
 harder to apply and
the results are more scarce.

In more degrees of freedom the first result is due to
Alexeev~\cite{Alekseev68} (actually before Moser ideas) who
generalized Sitnikov result~\cite{Sitnikov60} to small mass for the
third body. Positive mass increases the number of degrees of freedom
of the system to three. Moeckel~\cite{Moeckel84, Moeckel07} has proven results
concerning oscillatory motions in the regime of small angular momentum (and
thus close to triple collision).
%\marginpar{Revisar nombre
%dof Moeckel}

Concerning the RPE3BP
\eqref{def:Ham:Cartesian}, Robinson gave a conditional result on the
existence of oscillatory orbits~\cite{Robinson84, Robinson15}. He proved their
existence provided certain homoclinic points were present. As far as
the authors know, the existence of such points has not been proved
yet.

A fundamental problem concerning oscillatory motions is to measure
how abundant they are. As pointed out in~\cite{GorodetskiK12}, in
the conference in honor of the 70th anniversary of Alexeev, Arnold
posed the following question: \emph{Is the Lebesgue measure of the
set of oscillatory motions positive?} Arnold considered this
problem the central problem of celestial mechanics. Alexeev
conjectured in \cite{Alekseyev71} that the Lebesgue measure is zero
(in the English version~\cite{Alekseev81} he attributes this
conjecture to Kolmogorov). This conjecture remains wide open.

The only result dealing with  the abundance of oscillatory motions is
the recent paper~\cite{GorodetskiK12}. The authors study
the Hausdorff dimension of the set of oscillatory motions for the
Sitnikov example and the RPC3BP. Using~\cite{Moser01}
and~\cite{SimoL80}, they prove that  the Hausdorff dimension of the set of
oscillatory motins is maximal for a Baire
generic subset of an open set of parameters (the eccentricity of the
primaries in the Sitnikov example and the mass ratio and the Jacobi
constant in the RPC3BP).

\subsection{Arnold diffusion in the three body problem and growth in angular
momentum}

As we have said, we study a regime where the RPE3BP is close to a two body
problem. In the latter,  the angular momentum is a
first integral. This fact is no
longer true in the former. Thus, a natural question is whether
the angular momentum of the zero mass body only varies by a small amount (with
respect to the
perturbative parameter) or can make big excursions.

This question fits into the framework of Arnold diffusion \cite{Arnold64}.
Consider a nearly integrable system in action-angle coordinates
\[
 H(\varphi,I)=H_0(I)+\eps H_1(\varphi,I),\quad \varphi\in\TT^n,\, I\in
V\subset\RR^n,\,\eps\ll 1.
\]
For $\eps=0$, the action variables $I$ are constants of motion. Arnold diffusion
analyzes the drastic changes that the actions can undergo for small $\eps>0$.

In the setting of nearly integrable  $N$-body problems, the existence of
Arnold diffusion can be analyzed in very different regions in the phase space
and for very different ranges of the involved parameters. As far as the authors
know, the first paper dealing with Arnold diffusion in Celestial Mechanics is
\cite{Moeckel96} (later completed in \cite{Zheng:2010}) who considers
the five body problem. In \cite{DelshamsGR11}, the authors analyze unstable
behavior for the three body problem close to the Lagrangian point $L_1$.

Concerning to the growth of angular momentum, the paper \cite{FejozGKR11}
obtains such behavior for the RPE3BP (Hamiltonian
\eqref{def:Ham:Cartesian}) along the mean motion resonances, which implies a
change of eccentricty in the osculating ellipse of the body of mass zero. More
related to our paper, is the recent \cite{DelshamsKRS14}, where such behavior is
obtained in a neighborhood of parabolic motions, the so-called invariant
manifolds of infinity. In \cite{DelshamsKRS14}, it is proven the existence of
orbits whose angular momentum $G(t)$ satisfies $G(0)<G_1$, $G(T)>G_2$ for some
$T>0$ and for any given $G_2>G_1$. Nevertheless, they need to assume that $\mu$
is exponentially small with respect to $G_2$ and $e_0$ is polynomially small
with respect to $G_2$.

%\begin{theorem}\label{thm:DKRS}
%There exist two constants $C>0$, $c>0$ and $\mu^*=\mu^*(C,c)>0$ such that for
%any $0<e_0<c/C$ and $0<\mu<\mu^*$, and for any two values of the angular
%momentum in the region
%$C\leq G_1^*<G_2^*\leq c/e_0$, there exists a trajectory of the ERTBP
%such that $G(0)<G_1$, $G(T)>G_2$ for some $T>0$.
%\end{theorem}

We expect that this result can be generalized to any value of the mass ratio.
Nevertheless, in the present paper we only give a conditional result (see
Remark \ref{rmk:RangeParametersDiffusion}).

%This theorem is very similar to the theorem that we obtain in this paper
%(Theorem \ref{thm:Main:Diffusion}). Nevertheless, it applies to different
%ranges
%of the parameters. This is explained in Remark
%\ref{rmk:RangeParametersDiffusion}.

\subsection{Main result}

We give in this section the main result that we obtain.

\begin{theorem}\label{thm:Main}
Fix any $\mu\in (0,1/2]$. There exists  $e^*_0(\mu)>0$ such that for any $e_0\in
(0,e_0^*(\mu))$ there exists an orbit $(q(t), p(t))$
of~\eqref{def:Ham:Cartesian} which is
oscillatory. Namely, it satisfies
\[
 \limsup_{t\rightarrow +\infty}\|q\|=+\infty\,\,\,\text{ and }\,\,\,
\liminf_{t\rightarrow +\infty}\|q\|<+\infty.
\]
\end{theorem}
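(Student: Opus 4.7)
The plan is to extend the approach of \cite{GuardiaMS14} for the circular case to the elliptic problem, treating $e_0>0$ as a small additional perturbation layered on top of the already-perturbative regime in which the third body is very far from the primaries. I would work in coordinates adapted to motion near infinity: Delaunay variables $(\ell,L,g,G)$ for the outer Kepler problem combined with a McGehee-type blow-up near $L=\infty$. In such coordinates the Hamiltonian becomes an integrable Kepler part plus a small perturbation whose size is controlled by a ratio of distances (and, separately, by $e_0$). Parabolic infinity becomes an explicit invariant set $\Lambda_\infty$ for the time-$2\pi$ map; because the elliptic problem carries genuine time dependence that cannot be removed by a rotating frame, $\Lambda_\infty$ is now a two-dimensional cylinder parameterized by the angular momentum $G$ and the time variable $s\in\TT$, rather than a circle as in the circular case.

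The first technical step is to set up $\Lambda_\infty$ as a weakly normally hyperbolic invariant manifold whose stable and unstable manifolds $W^s(\Lambda_\infty)$ and $W^u(\Lambda_\infty)$ consist of parabolic orbits approaching/leaving infinity polynomially in time. This is classical à la McGehee/Moser but requires care because of the parabolic degeneracy. Next, reduce the existence of oscillatory orbits to a transversal intersection problem: show that $W^s(\Lambda_\infty)$ and $W^u(\Lambda_\infty)$ intersect transversally along a homoclinic manifold in the appropriate energy-time level, and analyze the associated inner (dynamics on $\Lambda_\infty$) and outer (scattering along the homoclinic) maps. Once such transverse intersections are in hand, a Moser-Smale style symbolic dynamics yields orbits realizing any prescribed itinerary of ``close to $\Lambda_\infty$'' versus ``in a fixed bounded region'', and choosing itineraries with arbitrarily long stretches near $\Lambda_\infty$ interspersed with bounded returns produces oscillatory orbits in the sense of Theorem \ref{thm:Main}.

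The main analytic step is the transversality of $W^s(\Lambda_\infty)\pitchfork W^u(\Lambda_\infty)$ for small $e_0>0$. I would start from the circular case $e_0=0$, where this transversality was established in \cite{GuardiaMS14} via a Melnikov-type analysis whose splitting is exponentially small in the large parameter $G\gg 1$ and hence requires complex-variable methods controlling the singularities of the unperturbed parabolic homoclinic in complex time. Turning on small $e_0$, I would expand the Melnikov potential as a sum of the $e_0=0$ term (already known to be non-degenerate) plus an $O(e_0)$ correction arising from the elliptic time dependence, and prove that the correction cannot destroy the non-degeneracy uniformly in $G$ within the parameter regime of interest. This requires the same Poincar\'e-Melnikov framework extended to a two-angle setting (the Kepler angle and the elliptic time $s$), with control of both analytic extensions.

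The hard part will be precisely this splitting analysis. First, one must prove exponentially small lower bounds on a Melnikov function in a regime where naive first-order perturbation theory does not suffice. Second, and more novel compared to \cite{GuardiaMS14}, $\Lambda_\infty$ is now a two-torus whose inner dynamics is genuinely quasi-periodic (with frequencies coming from the Kepler motion and from the period of the primaries), so one must handle small divisors and ensure that the returning pieces of homoclinic orbits can be glued to orbits of the inner dynamics — most naturally by producing invariant curves on $\Lambda_\infty$ via a KAM-type argument and anchoring the symbolic dynamics on them. Combining exponentially small splitting estimates uniform in $e_0$ with the inner KAM picture on $\Lambda_\infty$ is what makes the step from $e_0=0$ to $e_0>0$ nontrivial, and is the source of the smallness requirement $e_0\in(0,e_0^*(\mu))$ in the statement.
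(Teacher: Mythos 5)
Your setup (McGehee variables, a normally parabolic cylinder at infinity, importing the transversality of its invariant manifolds from the circular case \cite{GuardiaMS14} and perturbing in $e_0$) matches the paper's starting point, but two of your key steps go in directions that either fail or miss the actual difficulty. First, you propose to conclude via ``Moser--Smale style symbolic dynamics'' once transversal intersections are established. For the elliptic problem the Poincar\'e map is four dimensional and the invariant object at infinity is a normally \emph{parabolic} two-dimensional cylinder, not a (parabolic) fixed point of an area-preserving map; there is no off-the-shelf horseshoe theorem in this setting, and even the classical $\CCC^1$ Lambda-lemma estimates fail for parabolic objects (cf.\ \cite{GorodetskiK12}). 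The paper deliberately avoids symbolic dynamics: it builds an \emph{infinite transition chain} of parabolic fixed points connected by transversal heteroclinics and then proves, as a separate and substantial piece of work, a $\CCC^0$ Lambda lemma \emph{\`a la Shilnikov} adapted to the parabolic setting (existence and smoothness of the invariant manifolds via the parameterization method, straightening, and Gronwall-type transition estimates), which is then used to shadow the chain. Your proposal contains no substitute for this shadowing ingredient, and without it the transversal intersections alone do not produce an orbit.

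Second, your picture of the dynamics on the cylinder is off, and this affects how you keep the orbit from escaping in $G$. At infinity the inner dynamics is trivial (for the Poincar\'e map the cylinder is filled with parabolic fixed points; the Kepler frequency degenerates there), so there are no genuine quasi-periodic inner motions, no small divisors in the inner dynamics, and a KAM argument ``on $\Lambda_\infty$'' has nothing to act on. The confinement must instead come from the \emph{outer} dynamics: the paper shows the two scattering maps $\SSS^\pm$ are $\OO(e_0)$-perturbations of integrable twist maps of the cylinder and applies Herman's twist theorem \emph{to the scattering maps} to get invariant curves, hence bounded scattering orbits, which is exactly what produces an infinite chain confined to a compact band $G\in[G_1,G_2]$ (needed so the shadowing orbit is oscillatory rather than drifting). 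Relatedly, your ``hard part'' --- exponentially small lower bounds for an elliptic Melnikov function uniformly in $G$ --- is not needed for Theorem \ref{thm:Main}: once the chain is confined to a compact $G$-interval, the transversality for the elliptic problem follows from regular perturbation of the circular one, with $e_0^*$ allowed to depend on that interval; the exponentially small analysis lives entirely in the circular result \cite{GuardiaMS14} that both you and the paper quote.
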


%\begin{remark}
This theorem shows that $OS^+\neq\emptyset$. Proceeding analogously, one can
show that $OS^-\neq\emptyset$. Nevertheless, our techniques do not allow us to
show that $OS^-\cap OS^+\neq\emptyset$. To do that, one needs to consider more
sophisticated techniques. Indeed, the ideas developed by Moser rely on the
construction of a horseshoe which has branches arbitrarily close to infinity.
This allows to combine all possible past and future behavior. Here, as
we explain in Section \ref{sec:Comparison}, we rely on different
techniques, which are simpler to generalize to higher dimensions:
shadowing certain invariant objects. These techniques can only be applied to one
time direction, either future or past, but not to both of them at the same
time.
%\end{remark}

We only obtain oscillatory motions provided the primaries perform nearly
circular orbits. It is expected that such motions exist for any value of the
eccentricity $e_0\in [0,1)$. The approach presented in this paper can be
applied to this more general setting. The only additional difficulty is to
extend the  proof of  transversality of the invariant manifolds of infinity to
this wider range of parameters (see Section \ref{sec:InvManifoldsInfinity} for
more details). In this paper, we use perturbative arguments and use that this
transversality is already known for the circular problem \cite{GuardiaMS14}.
%To
%prove the transversality for any value of $e_0$ is certainly more complicated.

%Now, we give our main result concering growth in angular momentum.
%\begin{theorem}\label{thm:Main:Diffusion}
%Fix $\mu\in (0,1/2]$. There exists $G_0(\mu)>0$ such that for any  $G_2\geq
%G_1\geq G_0(\mu)$, there exists  $e^*_0(G_2)>0$ such that for any $e_0\in
%(0,e_0^*(G_2))$ there  exist a time $T>0$ and  an orbit $(q(t), p(t))$
%of~\eqref{def:Ham:Cartesian} whose angular momentum $G$
%satisfies
%\[
%G(0)\leq G_1\quad\text{and}\quad G(T)\geq G_2.
%\]
%\end{theorem}

\begin{remark}\label{rmk:RangeParametersDiffusion}
The geometric framework we use to prove Theorem~\ref{thm:Main} can
be also used to obtain orbits with large drift in the angular
momentum, provided some non-degeneracy condition is satisfied. If such
condition could be verified to be true, one would generalize the results in
\cite{DelshamsKRS14} by obtaining the following. Fix $\mu\in (0,1/2]$. There
exists $G_0(\mu)>0$ such that for any
$G_2\geq G_1\geq G_0(\mu)$, there exists  $e^*_0(G_2)>0$ such that for any
$e_0\in(0,e_0^*(G_2))$ there  exist a time $T>0$ and  an orbit $(q(t), p(t))$
of~\eqref{def:Ham:Cartesian} whose angular momentum $G$
satisfies
\[
G(0)\leq G_1\quad\text{and}\quad G(T)\geq G_2.
\]
%This result would generalize \cite{DelshamsKRS14}.
The needed non-degeneracy
condition is explained in Remark
\ref{rmk:ConditionDiffusion}.
\end{remark}

\subsection{Common framework for oscillatory motions and Arnold
diffusion}\label{sec:Comparison}
The purpose of this section is to relate the results of both Theorem
\ref{thm:Main} and Remark \ref{rmk:RangeParametersDiffusion} and put them in the
same framework.
This is explained in more detail in Section \ref{sec:InvManifoldsInfinity}.

Let us start by explaining Moser ideas to obtain oscillatory orbits for the
Sitnikov example
\cite{Moser01} and applied in \cite{SimoL80} to the RPC3BP. The RPC3BP
(Hamiltonian \eqref{def:Ham:Cartesian} with $e_0=0$) has a first integral, the
Jacobi constant. Then, in
suitable coordinates, the RPC3BP can be reduced to a two dimensional Poincar\'e
map for which ``infinity'' $\{|q|=+\infty, \dot q=0\}$ is a parabolic fixed
point.

Assume for a moment that this fixed point is hyperbolic. Then, it would have
stable and unstable invariant manifolds. Assume that these invariant manifolds
intersect transversally. Then, Smale Theorem would ensure that there exists a
horseshoe. Suitable orbits in this horseshoe  travel close to the
invariant manifolds and the lim inf of the distances to the fixed point is
zero. Such orbits, in the original coordinates, are oscillatory.

The infinity point of the RPC3BP is not hyperbolic but parabolic.
\cite{SimoL80}, as done in
\cite{Moser01} for the Sitnikov problem, shows that even if it is parabolic one
can carry out
the same strategy. Note that one has to face different difficult issues: prove
the existence of the invariant manifolds of the parabolic point (see
\cite{McGehee73}), prove that
they intersect transversally and prove a Lambda lemma for parabolic points
which implies the existence of symbolic dynamics.

To carry out this strategy in the elliptic case is certainly more involved
since the phase space has dimension five and therefore one cannot reduce the
dynamics to an area preserving map. The stroboscopic Poincar\'{e} map is
four dimensional.
Thus,  infinity cannot be reduced to a fixed
point but it forms  a cylinder with one angular variable and one real variable
(see Section \ref{sec:InvManifoldsInfinity}). This cylinder is ``normally
parabolic'' and it has invariant manifolds. Since in this paper we consider the
elliptic problem as a perturbation of the circular one, the result in
\cite{GuardiaMS14} implies that the invariant manifolds of this cylinder
intersect transversally.

To prove that such transversal intersections lead to oscillatory motions we do
not rely on the construction of symbolic dynamics as Moser did \cite{Moser01}.
Instead, we use the standard method of Arnold diffusion to construct a
transition chain of tori \cite{Arnold64} (in the present setting of fixed
points).
%periodic orbits)
That is, we find a sequence of fixed points belonging to
the cylinder of infinity which are connected by transversal heteroclinic
orbits.  In Arnold diffusion problems usually it is enough to construct a finite
chain.
Instead, to obtain oscillatory orbits one has to construct an infinite chain
and then prove the existence of an orbit which shadows the chain.
This construction is much simpler than a horseshoe and leads to oscillatory
motions forward (or backward) in time. Certainly a horseshoe gives much more
information and imply a plethora of different types of motion. In particular,
it
allows to combine different types of final motions in the past and in the
future
(including motions which are oscillatory as both time tends to plus and
minus infinity). Our simpler techniques can only show the existence of
oscillatory motions in one  direction of time. Nevertheless, we think that one
of the main
interests of this paper is to show that such a simple mechanism as the
existence of transtion chains leads to oscillatory motions.

The vertical direction of the cylinder of infinity can be parameterized by the
angular momentum of the orbits. To prove the existence of oscillatory orbits
one needs to construct a chain which remains in a compact portion of this
cylinder. The reason is that orbits shadowing chains with unbounded angular momentum cannot be
oscillatory since $\liminf |q|=+\infty$. In fact, we
can prove the existence of chains confined in very thin portions of the
cylinder and thus with almost constant angular momentum. 
This is done by considering the scattering map \cite{DelshamsLS08} and
studing its dynamical properties.
Now, one can ask the opposite question. It is possible to construct a chain which
implies
a large deviation in the angular momentum? Orbits shadowing such chains have a
big drift in angular momentum and present the phenomenon of Arnold
diffusion as explained in Remark \ref{rmk:RangeParametersDiffusion}.
Summarizing, the
orbits
given  both by  Theorem \ref{thm:Main} and Remark
\ref{rmk:RangeParametersDiffusion} are
obtained thanks to suitable transition chains associated to the cylinder of
infinity.

The structure of the paper goes as follows. First in Section
\ref{sec:InvManifoldsInfinity} we analyze the invariant manifolds of infinity.
We prove its existence and regularity and we analyze their transversal
intersections. This allows us to construct the needed transition chain of
periodic orbits. In Section \ref{sec:Shadowing}, we state a Lambda lemma which
can be applied to the invariant manifolds of the normally parabolic cylinder of
infinity. This Lambda lemma allows us to prove Theorem \ref{thm:Main}. The
prove of the Lambda Lemma is deferred to Section
\ref{sec:proof_of_lem:LambdaLemma}.

\section{The invariant manifolds of infinity}\label{sec:InvManifoldsInfinity}
Let  $(r,\al)$ be the polar
coordinates in the plane and  $(y, G)$ their symplectic conjugate momenta.
Then, the Hamiltonian \eqref{def:Ham:Cartesian} becomes
\[
%\begin{equation}\label{def:HamEllipticPolars}
\wt H(r,\al,y,G,t;e_0) = \frac{1}{2} \left( \frac{G^2}{r^2}+ y^2\right)
-  U(r,\al,t;e_0),
%\end{equation}
\]
where $U(r,\al,t;e_0)=V(re^{i\al},t;e_0)$.
%with
%\[
% U(r,\al,t;e_0)=\frac{1}{r}-\frac{\mu(1-\mu)}{2}(1-3\cos
%(\al-f(t))\frac{\rr^2(t)}{r^3}+\OO\left(\frac{1}{r^4}\right)
%\]
The variable $G$ is the angular momentum of the third body.
As we deal with a non-autonomous system, we add the
equation $\dot s=1$ to the Hamiltonian equations of Hamiltonian $\widetilde H
(r,\al,
y,G,s;e_0)$.

Since we want to study the invariant manifolds of infinity, we consider the
McGehee coordinates $(x,\al,y,G)$
where
\[
%\begin{equation}\label{def:McGehee}
 r=\frac{2}{x^2}, \text{
for }x>0.
\]
We obtain the new system
\begin{equation}\label{eq:3bpInMcGehee}
\begin{aligned}
\dot{x}&=-\frac{1}{4}x^{3}y
&\dot{y}&=\frac{1}{8}G^{2}x^{6}-\frac{x^{3}}{4}\frac{\partial
\mathcal{U}}{\partial x}\\
\dot{\alpha}&=\phantom{-}\frac{1}{4} x^{4}G    &\dot{G}&=\frac{\partial
\mathcal{U}}{\partial \alpha}\\
\dot s & =1
\end{aligned}
\end{equation}
where the potential $\mathcal{U}$ is given by
\[
%\begin{equation}\label{eq:potencial}
\mathcal{U}(x,\alpha,s;e_0)=U(2/x^2,\alpha,s;e_0)=\frac{x^2}{2}\left(\frac{1-\mu
}{
\sigma_S}+\frac{\mu}{\sigma_J}\right)
%\end{equation}
\]
with
\begin{align*}
 |q-q_\textrm{S}|^{2}=\sigma_\textrm{S}^{2}&=1-\mu r_0(s) x^{2}\cos(\alpha -
v(s))+\frac{1}{4} \mu^{2} r_0^{2}(s)x^{4},\\
 |q-q_\textrm{J}|^{2}=\sigma_\textrm{J}^{2}&=1+(1-\mu) r_0(s) x^{2}\cos(\alpha -
v(s))+\frac{1}{4}(1-\mu)^{2} r_0^{2}(s)x^{4}.
\end{align*}
%Note that the potential $\UU$ also depends on $\mu$. We omit this
%dependence to simplify the notation.
We write the potential as
\begin{equation}\label{def:PerturbedPotential}
 \UU=\frac{x^2}{2}+\Delta\UU.
\end{equation}
Since
\[
 U(r,\al,t;e_0)=\frac{1}{r}-\frac{\mu(1-\mu)}{2}(1-3\cos
(\al-v(t))\frac{\rr^2(t)}{r^3}+\OO\left(\frac{\mu}{r^4}\right),
\]
the potential $\Delta\UU$ satisfies $\Delta\UU=\OO(\mu x^{6})$.

In view of \eqref{eq:3bpInMcGehee}, now ``infinity''
%has become the three dimensional  cylinder
%\begin{equation}\label{def:cylinderLambda}
%\Lambda = \{(x,\alpha,y,G,t) \in \RR\times \TT\times \RR\times \RR\times \TT\mid
%x = y = 0\}.
%\end{equation}
is foliated by the parabolic periodic orbits
\[
%\begin{equation}
%\label{def:periodicorbit_extended_phase_space}
\widetilde
\Lambda_{\al_0,G_0} = \{(x,\alpha,y,G,s) \in \RR\times \TT\times
\RR\times \RR\times \TT\mid x = y = 0, \; \al = \al_0, \; G = G_0\}.
%\end{equation}
\]

%Note that since we are dealing with a non-autonomous system, the time $t$ has tobe added as a variable.

%This cylinder $\Lambda$ is invariant and is  foliated by periodic orbits
%Looking at equation \eqref{eq:3bpInMcGehee}, one can see that the invariant cylinder is ``normally parabolic''.
Next theorem claims that each periodic orbit $\widetilde \Lambda_{\al_0,G_0}$
has stable and
unstable $2$-dimensional invariant manifolds
$W^s(\widetilde \Lambda_{\al_0,G_0})$ and $W^u(\widetilde
\Lambda_{\al_0,G_0})$.

\begin{theorem}\label{thm:ExistenciaVarietats}
Let $\widetilde \phi_t$ the flow of the
system~\eqref{eq:3bpInMcGehee} and define the projections
$\pi_{x}(x,\alpha,y,G,s) = x$ and $\pi_{(x,y)}(x,\alpha,y,G,s)
= (x,y)$. Let $(\alpha_0,G_0) \in \TT\times
\RR$. There exists $\rho_0$ such that for any $0<\rho <\rho_0$, the
local stable set
\begin{multline*}
W^s_{\rho}(\widetilde \Lambda_{\al_0,G_0}) = \{ (x,\alpha,y,G,s) \in
\RR\times \TT\times \RR\times \RR\times \TT\mid \pi_{x}
\phi_t(x,\alpha,y,G,s)
>0, \\ \; |\pi_{(x,y)} \widetilde \phi_t(x,\alpha,y,G,s)|\le \rho,\; \lim_{t\to
+\infty} \text{\rm dist}(\widetilde
\phi_t(x,\alpha,y,G,s),\widetilde \Lambda_{\al_0,G_0})= 0\},
\end{multline*}
is a $2$-dimensional manifold.

Moreover, there exists $u_0>0$ such
that $W^s_{\rho}(\widetilde \Lambda_{\al_0,G_0})$ admits a
$\CCC^{\infty}$ parametrization $\gamma^s_{\al_0,G_0}:[0,u_0)\times \TT
\to \RR\times \TT\times \RR\times \RR\times \TT$, with
$\gamma_{\al_0,G_0}^s(0,s) = (0,\alpha_0,0,G_0,s)$, analytic in
$(0,u_0)\times \TT$, which depends analytically on $(\alpha_0,G_0)
\in \TT\times \RR$.
The analogous result for the (local) unstable set also holds.
\end{theorem}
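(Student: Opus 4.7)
The plan is to adapt McGehee's method \cite{McGehee73} for parabolic invariant manifolds to the present normally parabolic setting. The hyperbolic stable manifold theorem does not apply: the linearization of \eqref{eq:3bpInMcGehee} at $\widetilde\Lambda_{\al_0,G_0}$ vanishes identically in $(x,y)$ and is trivial in $(\al,G)$ as well, since $\dot\al=\OO(x^4)$ and $\dot G=\OO(\mu x^6)$. However, the unperturbed problem ($\Delta\UU\equiv 0$) has an explicit parabolic separatrix on the zero-energy level, $y_0(x;G)=x\sqrt{1-G^2x^2/4}$, along which $x(t)\sim(3t/4)^{-1/3}$ as $t\to+\infty$. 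I would construct $W^s_\rho(\widetilde\Lambda_{\al_0,G_0})$ as an $s$-periodic perturbation of the explicit surface $\{y=y_0(x;G_0),\ G=G_0,\ \al=\al_0-2\arcsin(G_0x/2)\}$ obtained by integrating the unperturbed flow backward from $\widetilde\Lambda_{\al_0,G_0}$.

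Concretely, I would seek $W^s_\rho$ as a graph $y=Y(x,s),\ \al=A(x,s),\ G=\GG(x,s)$ over $(x,s)\in[0,u_0)\times\TT$, with boundary data $Y(0,\cdot)=0$, $A(0,\cdot)=\al_0$, $\GG(0,\cdot)=G_0$. Substituting into \eqref{eq:3bpInMcGehee} and using $\dot s=1$ yields a quasilinear PDE system with transport operator $-\tfrac{x^3}{4}Y\,\partial_x+\partial_s$ and right-hand sides read off of $\dot y,\dot\al,\dot G$. Integrating along the characteristics of this operator from $x=0$ (equivalently $t=+\infty$) with the prescribed asymptotic data converts the system into an integral equation for the deviations $(Y-y_0,A-A_0,\GG-G_0)$, whose forcing is driven by $\Delta\UU=\OO(\mu x^6)$ and its derivatives, and hence vanishes to high order at $x=0$.

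I would close the argument by a contraction mapping in a Banach space of continuous, $s$-periodic functions on $[0,u_0]\times\TT$ equipped with weighted norms $\|f\|_k=\sup|x^{-k}f(x,s)|$, with $k$ tuned to the decay of $y_0$ and the $x^6$ order of vanishing of $\Delta\UU$; shrinking $u_0$ provides the contraction constant. The resulting fixed point $(Y,A,\GG)$ defines $\gamma^s_{\al_0,G_0}(u,s)=(u,A(u,s),Y(u,s),\GG(u,s),s)$, whose image contains $W^s_\rho$ for $\rho$ of the order of $u_0$ (by the estimate $|\pi_{(x,y)}\gamma^s|^2\sim 2u^2$). Differentiating the integral equation formally and solving the linearized system in the same weighted spaces yields $\CCC^\infty$ regularity on $[0,u_0)\times\TT$; rerunning the contraction in complex domains (strips $|\Im s|<\sigma$, sectors around the positive real $x$-axis, complex neighborhoods of $(\al_0,G_0)$) together with the analytic parametric fixed-point theorem provides analyticity on $(0,u_0)\times\TT$ and analytic dependence on $(\al_0,G_0)$.

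The main obstacle is the functional-analytic calibration. Since $y_0\sim x$ along the separatrix, the transport coefficient $-\tfrac{x^3}{4}Y$ and the Kepler force $\partial_x\UU_0=x$ have matching homogeneities, so the only source of smallness in the integral operator is the extra factor $x^2$ contributed by $\Delta\UU=\OO(x^6)$. Consequently the weight $k$ and the radius $u_0$ must be chosen very carefully for the operator to be both a self-map and a contraction, and for the resulting iterates to be estimated uniformly in $(\al_0,G_0)$ on compact sets. Moreover, analyticity cannot be extended up to $u=0$: the formal asymptotic expansion along a parabolic invariant manifold is generically Gevrey-divergent, which is precisely why only $\CCC^\infty$ regularity can be claimed at the boundary while analyticity is recovered on the open set $(0,u_0)\times\TT$.
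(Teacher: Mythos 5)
Your proposal follows McGehee's graph-transform route (find $W^s$ as a graph $y=Y(x,s)$, $\alpha=A(x,s)$, $G=\GG(x,s)$ over the base variable, convert to an integral equation along characteristics, contract in weighted spaces), whereas the paper deliberately takes a different path: the \emph{parametrization method} of Baldom\'a--Fontich--de la Llave--Mart\'{\i}n. Concretely, after the preliminary changes $\theta=\alpha+Gy$, $q=(x-y)/2$, $p=(x+y)/2$ and the rescaling $\tilde z=(z-z_0)/(q+p)$, the paper reduces to Theorem~\ref{thm:invariantmanifoldsflows}, which solves a semi-conjugacy equation $F\circ K=K\circ R$ with $R(t)=t-ct^N+\tilde c\,t^{2N-1}$: one computes a polynomial $K^{\le k}$ to arbitrary finite order, then finds the tail $\varphi^*\in\XX_{k+1}$ as a fixed point in weighted Banach spaces of functions analytic on complex sectors $V$ (Lemmas~\ref{lem:Vinvariant}--\ref{lem:F}). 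Your approach is essentially that of Robinson/McGehee; the paper explicitly states that it avoided that route both because the authors could not verify all of Robinson's details and because the parametrization method packages the analytic parameter dependence and the $\CCC^\infty$ foliation regularity directly in the form needed by the Lambda lemma. Both strategies are viable; the parametrization method buys a cleaner ``invariance equation plus polynomial normal form'' structure and avoids the need to straighten characteristics by hand, at the cost of the sector-geometry lemmas, while your graph approach is more elementary and closer to the original dynamical picture.

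One genuine gap you should repair: the claim that ``differentiating the integral equation formally and solving the linearized system in the same weighted spaces yields $\CCC^\infty$ regularity on $[0,u_0)\times\TT$'' is not a complete argument at the parabolic boundary $u=0$. A single weighted norm $\|f\|_k=\sup|x^{-k}f|$ controls only finitely many derivatives at $x=0$, and each formal differentiation drops the weight, so you cannot bootstrap indefinitely inside one space. Both McGehee's method and the parametrization method handle this by first constructing a finite Taylor jet of arbitrary order $k$ that solves the invariance equation modulo $\OO(x^{k+N})$ (in the paper this is $K^{\le k}$ from~\eqref{eq:formal}), and then running the contraction only for the remainder in $\XX_{k+1}$; the remainder being $\OO(x^{k+1})$ gives $\CCC^k$ at the boundary, and arbitrariness of $k$ gives $\CCC^\infty$. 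You need to make this two-step (jet plus tail) structure explicit; without it the $\CCC^\infty$ conclusion at $u=0$ is unsupported. You should also be explicit that some normalization of the central variables (analogous to the paper's $\theta=\alpha+Gy$ and $\tilde z=(z-z_0)/(q+p)$) is required so that the tangency orders $\pi_p\gamma^u=\OO(q^2)$, $\pi_z\gamma^u=z_0+\OO(q^3)$ come out, since those are exactly what the subsequent straightening in Proposition~\ref{prop:straighteningofthemanifolds} relies on.
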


The proof of this theorem follows from
Proposition \ref{prop:local_invariant_manifolds} in Section
\ref{sec:proof_of_lem:LambdaLemma} (the technical details are done in Appendix
\ref{sec:invman}).  A related result concerning the invariant manifolds of
infinity is given in \cite{Robinson84} (see also \cite{Robinson15}).
Nevertheless we have included the proof
of this theorem, which is based in the parameterization method
\cite{CabreFL03a,BaldomaFdlLM07}, for two reasons. One the one hand, we are not
able to follow all the details in the proof in \cite{Robinson84,Robinson15}. On
the other
hand, our result provides the regularity of the stable/unstable foliations of
the invariant manifolds, needed in Lemma \ref{lem:LambdaLemma}.

Theorem \ref{thm:ExistenciaVarietats} shows that the points which tend
asymptotically in forward (or
backward) time to the periodic orbit $\widetilde \Lambda_{\al_0,G_0}$ form  a
manifold.
Since this periodic orbit is not hyperbolic  but
parabolic the rate of convergence  is not exponential and its invariant
manifolds are
not analytic at the periodic orbit but only $\CCC^\infty$. They are
analytic at any other point.

%The invariant manifolds $W^s(\Lambda_{\al_0,G_0})$ and
%$W^u(\Lambda_{\al_0,G_0})$ are regular with respect to parameters and also with
%respect to the $\al_0$ and $G_0$. This implies that
%\[
% W^\sigma(\Lambda)=\bigcup_{\al_0\in\TT, G_0\in
%[G_1,G_2]}W^\sigma(\Lambda_{\al_0,G_0}),\,\,\,\sigma=u,s
%\]
%are manifolds and are the invariant manifolds of the cylinder $\Lambda$.

As we have explained in Section \ref{sec:intro}, when $\mu=0$, the
invariant manifolds  $W^s(\widetilde \Lambda_{\al_0,G_0})$ and
$W^u(\widetilde \Lambda_{\al_0,G_0})$ coincide and form a
two-parameter family of parabolas in the configuration space (in the
original cartesian coordinates).
We study the splitting of these invariant manifolds for any $\mu\in(0,1/2]$ in
two steps. First for the
circular problem ($e_0=0$) and then for the elliptic problem ($0<e_0\ll1$). In this
study  we need formulas for the homoclinic manifolds when $\mu=0$. For the
derivation of such formulas, one can see \cite{SimoL80}.

\begin{lemma}\label{lemma:UnperturbedSeparatrix}
Take $\mu=0$ and fix $\al_0\in\TT$ and $G_0\neq 0$. Then, system
\eqref{eq:3bpInMcGehee} has a family of homoclinic orbits to the
periodic orbit $\widetilde \Lambda_{\al_0,G_0}$, which is
%whose orbits tend to  the origin $(x,y)=(0,0)$ as $t\longrightarrow\pm\infty$. These orbits are
given by
\[
 \begin{split}
 x_\h(t;G_0)=&\frac{2}{G_0(1+\tau^2)^{1/2}}\\
 y_\h(t;G_0)=&\frac{2\tau}{G_0(1+\tau^2)} \\
 \al_\h(t;\al_0)=&\al_0+ \wt \al_\h(t),\quad \wt
\al_\h(t)=2\arctan \tau\\
 G_\h(t;G_0)=&G_0 \\
 s_\h(t;s_0)=&s_0+t \\
 \end{split}
\]
where  $s_0\in\TT$ is a free parameter and $\tau$ and the time
$t$ are related through
\[
 t=\frac{G_0^3}{2}\left(\tau+\frac{\tau^3}{3}\right).
\]
\end{lemma}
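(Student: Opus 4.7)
The plan is to observe that at $\mu=0$ the system \eqref{eq:3bpInMcGehee} reduces to a Kepler two-body problem in McGehee coordinates, and then to identify the claimed formulas as a parametrization of a parabolic Keplerian orbit (the only class of orbits asymptotic to infinity with zero limiting velocity). The final proof is then a one-page chain-rule verification.

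First I would set $\mu=0$ in \eqref{def:PerturbedPotential}: since $\Delta\UU=\OO(\mu x^{6})$, the potential collapses to $\UU=x^{2}/2$, giving $\partial_{\al}\UU=0$ and $\partial_{x}\UU=x$. The equations \eqref{eq:3bpInMcGehee} then decouple: $\dot G=0$, so $G\equiv G_{0}$; $\dot s=1$, so $s=s_{0}+t$; the $(x,y)$ subsystem becomes autonomous,
\[
\dot x=-\frac{x^{3}y}{4},\qquad \dot y=\frac{G_{0}^{2}x^{6}}{8}-\frac{x^{4}}{4},
\]
and admits the first integral $E(x,y)=\tfrac{y^{2}}{2}+\tfrac{G_{0}^{2}x^{4}}{8}-\tfrac{x^{2}}{2}$ (the restriction of $\widetilde H|_{\mu=0}$ to $G=G_{0}$, as is immediate once one differentiates $E$ along the flow); finally $\al$ is obtained by the quadrature $\dot\al=x^{4}G_{0}/4$.

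Any orbit asymptotic in both time directions to $\widetilde\La_{\al_{0},G_{0}}=\{x=y=0\}$ must lie on the level set $E=0$, i.e.\ on the closed curve $y^{2}=x^{2}(1-G_{0}^{2}x^{2}/4)$, $0\le x\le 2/|G_{0}|$. This is exactly the image in McGehee coordinates ($r=2/x^{2}$) of a parabolic Kepler orbit of semi-latus rectum $p=G_{0}^{2}$, whose classical representation is $r=p/(1+\cos v)$ with $v=\al-\al_{0}$ the true anomaly. Introducing the standard half-angle variable $\tau=\tan(v/2)$, so that $1+\cos v=2/(1+\tau^{2})$, gives $r=G_{0}^{2}(1+\tau^{2})/2$, hence $x=2/(G_{0}\sqrt{1+\tau^{2}})$, $\al-\al_{0}=2\arctan\tau$, and Barker's equation for parabolic Keplerian motion becomes $t=\tfrac{G_{0}^{3}}{2}(\tau+\tau^{3}/3)$. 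The formula for $y$ is then forced by $y=-4\dot x/x^{3}$ combined with the chain rule.

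To keep the proof self-contained I would not invoke the Keplerian derivation but simply verify the stated formulas directly. From $dt/d\tau=\tfrac{G_{0}^{3}}{2}(1+\tau^{2})$ one computes $\dot x=\frac{dx}{d\tau}\cdot\frac{d\tau}{dt}$, $\dot\al=\frac{d\al}{d\tau}\cdot\frac{d\tau}{dt}$, and checks each equation of \eqref{eq:3bpInMcGehee} in turn; all four reduce to algebraic identities in $\tau$ and $G_{0}$. The boundary behaviour $x_\h,y_\h\to 0$ as $|\tau|\to\infty$ (equivalently as $|t|\to\infty$) shows that each member of the family is indeed homoclinic to $\widetilde\La_{\al_{0},G_{0}}$. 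There is no genuine obstacle; the only mildly delicate points are keeping track of the sign of $y_\h$ (which changes at the pericenter-like turning point $\tau=0$, corresponding to the maximum $x=2/|G_{0}|$) and noting that for $G_{0}<0$ the parametrization reverses orientation in $t$, which does not affect the conclusion.
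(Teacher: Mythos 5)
Your proof is correct, and it fills a gap the paper itself leaves open: the paper gives no argument for Lemma~\ref{lemma:UnperturbedSeparatrix} at all, simply pointing the reader to~\cite{SimoL80} ``for the derivation of such formulas.'' Your two-pronged presentation---first identifying the level curve $E(x,y)=\tfrac{y^{2}}{2}+\tfrac{G_{0}^{2}x^{4}}{8}-\tfrac{x^{2}}{2}=0$ as the McGehee image of a parabolic Keplerian orbit of semi-latus rectum $G_{0}^{2}$, and then checking the proposed parametrization directly against the vector field---is exactly the kind of self-contained argument the cited reference would supply. The conceptual half (Kepler plus Barker's equation) explains \emph{why} the formulas look as they do; the verification half (chain rule with $\tfrac{dt}{d\tau}=\tfrac{G_{0}^{3}}{2}(1+\tau^{2})$ reducing each of the four equations in~\eqref{eq:3bpInMcGehee} to an algebraic identity in $\tau$) is what the lemma as stated actually requires, and I have checked that all four identities do close up.

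Two small points you should make explicit if you write this out in full. First, you should verify the $\alpha$-asymptotics as well as those of $(x,y)$: one has $\wt\al_{\h}(\tau)=2\arctan\tau\to\pm\pi$ as $\tau\to\pm\infty$, and since $\pi\equiv-\pi$ in $\TT$ this is a single well-defined limit, so the orbit is indeed homoclinic (to the torus labeled by that limiting $\alpha$-value); without noting this the word ``homoclinic'' is not fully justified. Second, the formulas as written presuppose $G_{0}>0$ so that $x_{\h}>0$ in accordance with the convention $r=2/x^{2}$, $x>0$; for $G_{0}<0$ one should either replace $G_{0}$ by $|G_{0}|$ in $x_{\h}$, $y_{\h}$ or appeal to the reflection symmetry $(x,y)\mapsto(-x,-y)$ of the $\mu=0$ system, rather than speak of reversing orientation in $t$ (which does not by itself fix the sign of $x_{\h}$). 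Neither issue affects the substance of the argument.
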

From now on, we abuse notation and we consider the functions $x_\h$,
$y_\h$ and $\wt\al_\h$ both as functions of $\tau$ or $t$.

%\subsection{The nearly integrable limit as $G_0\rightarrow +\infty$}

\subsection{Scattering map for the circular problem in the extended phase space}
\label{sec:scatering_circular}

Using the results of Theorem \ref{thm:ExistenciaVarietats}, for any $G_1>0$, the invariant set
%has become the three dimensional  cylinder
\begin{equation}\label{def:cylinderLambda}
\widetilde \Lambda^{[G_1,+\infty)} = \bigcup_{\al_0\in \TT, G_0\ge
G_1}\widetilde
\Lambda_{\al_0,G_0} = \{(x,\alpha,y,G,s) \in \RR\times \TT\times
\RR\times \RR\times \TT\mid x = y = 0, \, G\ge G_1\}
\end{equation}
is a ``normally parabolic'' $3$-dimensional invariant manifold with
stable and unstable $4$-dimensional invariant manifolds
\[
W^\varsigma(\widetilde \Lambda^{[G_1,+\infty)})=\bigcup_{\al_0\in\TT, G_0\ge
G_1}W^\varsigma(\widetilde \Lambda_{\al_0,G_0}),\,\,\,\varsigma=u,s.
\]
Theorem 2.2 of \cite{GuardiaMS14} implies that, when $e_0=0$ and $\mu\in (0,1/2]$,  there exists $G^*\gg
1$ such that, for any $G_1
>G^*$, the invariant manifolds $W^s(\widetilde \Lambda^{[G_1,+\infty)})$ and $W^u(\widetilde \Lambda^{[G_1,+\infty)})$ intersect
transversally  in the whole space along two different $3$-dimensional homoclinic manifolds.

Following \cite{DelshamsLS08}, this transversality allows us to
define two  scattering maps $\widetilde\SSS_0^{\pm}$ associated to the
two
different transversal homoclinic intersections between
$W^s(\widetilde \La^{[G_1,+\infty)})$ and $W^u(\widetilde \La^{[G_1,+\infty)})$
and to obtain
formulas for these maps. To this end, we consider the Poincar\'e
function for $e_0=0$,
\begin{equation}
\label{def:poincarepotential}
 L(\alpha_0,G_0,s_0,\sigma;0)=
\int_{-\infty}^{\infty}\Delta\mathcal{U}(x_\h(\sigma+t;G_0),
\alpha_0+\wt\alpha_\h(\sigma+t;G_0),s_0+t;0)\, dt
\end{equation}
where $\Delta\UU$ is the potential defined in \eqref{def:PerturbedPotential} and
$(x_\h, \wt\al_\h)$
are  components of the parameterization of the unperturbed separatrix
given in Lemma \ref{lemma:UnperturbedSeparatrix}.

For any $(\alpha_0,G_0,s_0)$, the function $\sigma\mapsto
L(\alpha_0,G_0,s_0,\sigma;0)$ has two critical points $\sigma^*_\pm$
given by:
%which satisfy the equations
\begin{equation}
\label{def:sigmastarpm}
 \sigma^*_-=s_0-\al_0,\,\, \sigma^*_+=\pi+s_0-\al_0.
\end{equation}
This fact is given by Proposition 3.1 of  \cite{GuardiaMS14} (note that this
proposition is stated in certain scaled variables).

The reason to obtain such simple formulas for the critical points is
twofold. On the one hand, when $e_0=0$,  the potential only depends
on the angles $\alpha_0$ and $s_0$ through $\al_0-s_0$ . Thus, since
\[
 L(\alpha_0,G_0,s_0,\sigma;0) =  L(\alpha_0,G_0,s_0-\sigma,0;0),
\]
the Poincar\'e function only depends on one angular variable
$\al_0-s_0+\sigma$. On the other hand, its Fourier expansion only
contains cosines (see the Appendix of \cite{GuardiaMS14}).

Associated to the zero $\sigma^*_+$, there is a heteroclinic
connection between two periodic orbits  in $\widetilde
\Lambda^{[G_1,+\infty)}$, which are
%\textr{$\widetilde \Lambda_{\alpha_0,G_0}$}
$\mu G_0^{-4}$--close to $\widetilde
\Lambda_{\alpha_0,G_0}$ (analogously for  $\sigma^*_-$). These
heteroclinic connections satisfy
\[
\widetilde
\Gamma_\pm^{\al_0,G_0,s_0}(t)=\left(x_\h(\sigma_\pm^*+t;G_0),
y_\h(\sigma_\pm^*+t;G_0),\alpha_0+\alpha_\h(\sigma_\pm^*+t;G_0),G_0,
s_0+t\right)+\OO\left(\mu G_0^{-4}\right)
\]
(see~\cite{GuardiaMS14}). Note that these heteroclinic orbits are well
defined for any $\al_0\in\TT$, $s_0\in\TT$ and $G_0\geq G_1$ (see
\eqref{def:cylinderLambda}). This
implies that there is a homoclinic channel (see \cite{DelshamsLS06a}) which is defined for all
points in $\widetilde \Lambda^{[G_1,+\infty)}$. Thus, we can define global
scattering maps
\[
\widetilde  \SSS_0^\pm:\widetilde
\Lambda^{[G_1,+\infty)}\longrightarrow\widetilde
\Lambda^{[G_1,+\infty)}.
\]
Following  \cite{DelshamsLS08}, recall that $x_+=\widetilde  \SSS_0^+(x_-)$ if there exists a heteroclinic connection between these two points
through the prescribed homoclinic channel (see \cite{DelshamsLS08} for a more precise definition and properties of the
scattering map). Usually it is not possible to define globally the scattering
map since it is only defined locally in open sets. Then, globally, it can be
multivaluated (see \cite{DelshamsLS06a}). The particular form of the
circular problem allows us to define it globally. Note also that, in principle,
the scattering map should map $\widetilde
\Lambda^{[G_1,\infty)}$ to a bigger cylinder. Nevertheless, as it is shown in
the next proposition, in the circular case the image is the same cylinder.

%We look for formulas of the two scattering maps in the
%coordinates $(\alpha, G,s)$, which parametrize~$\widetilde \Lambda$.

\begin{proposition}\label{prop:ScatteringMapCircular}
Let $G_1>G^*$. The scattering maps $\widetilde \SSS_0^\pm: \TT\times
 [G_1,+\infty)\times \TT \longrightarrow \TT\times
 [G_1,+\infty)\times \TT$  are of the
following form,
\[
%\begin{equation}\label{def:scatteringcircular_extended}
\widetilde  \SSS_0^\pm (\al, G, s)= ( \al+f^\pm (G), G,s),
%\end{equation}
\]
 where
\begin{equation}\label{def:twist}
f^\pm (G)
%=G^{-4}\pa_{G} \LL_\pm^*(G)+\OO\left(G^{-8}\right)
=- \mu(1-\mu)\frac{3\pi}{2G^4}+\OO\left(G^{-8}\right).
\end{equation}
\end{proposition}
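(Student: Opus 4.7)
The plan is to apply the first-order formula for the scattering map from~\cite{DelshamsLS08}. On each section $\{s=s_0\}$, the cylinder inherits the symplectic form $d\alpha\wedge dG$, and the scattering map is generated at leading order by the reduced Poincar\'e function $L^*_\pm(\alpha_0,G_0,s_0)=L(\alpha_0,G_0,s_0,\sigma^*_\pm;0)$, i.e.\ $\Delta\alpha=\partial_{G_0}L^*_\pm$ and $\Delta G=-\partial_{\alpha_0}L^*_\pm$ up to higher-order corrections, while $s$ is preserved. The first step is to exploit the rotational symmetry of the circular problem: for $e_0=0$, $r_0(s)\equiv 1$ and $v(s)=s$, so $\Delta\UU(x,\alpha,s;0)$ depends on $(\alpha,s)$ only through $\alpha-s$, whence $L(\alpha_0,G_0,s_0,\sigma;0)=\LL(\theta,G_0)$ with $\theta=\alpha_0-s_0+\sigma$. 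Since $\sigma^*_\pm$ in~\eqref{def:sigmastarpm} are chosen so that $\alpha_0-s_0+\sigma^*_\pm\in\{0,\pi\}$, the reduced function $L^*_\pm$ depends only on $G_0$. This immediately yields $\partial_{\alpha_0}L^*_\pm=0$ and reduces the proposition to computing $f^\pm(G)=\partial_{G_0}L^*_\pm(G_0)$.

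To evaluate $L^*_\pm$, I would expand the Newtonian kernels $1/\sigma_S$ and $1/\sigma_J$ in a multipole series, obtaining
\[
\Delta\UU(x,\alpha,s;0)=\sum_{l\ge2}\frac{a_l(\mu)\,x^{2l+2}}{2^{l+1}}\,P_l(\cos(\alpha-s)),\qquad a_l(\mu)=(1-\mu)(-\mu)^l+\mu(1-\mu)^l,
\]
where $P_l$ is the Legendre polynomial. The dipole coefficient $a_1\equiv0$ (barycentric frame) and $a_2(\mu)=\mu(1-\mu)$, so the leading contribution is the quadrupole term. Plugging in the separatrix of Lemma~\ref{lemma:UnperturbedSeparatrix} and switching to the uniformising parameter $\tau$ via $dt=\tfrac{G_0^3}{2}(1+\tau^2)\,d\tau$ and $x_\h=2/(G_0\sqrt{1+\tau^2})$, the $l$-th contribution becomes
\[
L^{(l)}=\frac{2^l\,a_l(\mu)}{G_0^{2l-1}}\int_{-\infty}^{\infty}\frac{P_l(\cos\psi(\tau))}{(1+\tau^2)^l}\,d\tau,\qquad \psi(\tau)=\theta+2\arctan\tau-\tfrac{G_0^3}{2}\bigl(\tau+\tfrac{\tau^3}{3}\bigr).
\]
Expanding $P_l(\cos\psi)$ in Fourier modes of $\psi$, only the constant harmonic contributes polynomially in $1/G_0$; each non-trivial mode $\cos(k\psi)$ oscillates rapidly in $\tau$ at frequency $\sim G_0^3$, and deforming the contour to the poles $\tau=\pm i$ of $(1+\tau^2)^{-l}$ shows that its contribution is exponentially small in $G_0^3$.

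For $l=2$, $P_2(\cos\psi)=\tfrac14+\tfrac34\cos(2\psi)$ and $\int(1+\tau^2)^{-2}d\tau=\pi/2$, giving $L^*_\pm=\pi\mu(1-\mu)/(2G_0^3)+O(e^{-cG_0^3})$. For every odd $l$, $P_l(\cos\theta)$ is odd in $\cos\theta$ and so decomposes as a finite combination of $\cos(k\theta)$ with $k$ odd and no constant harmonic; its contribution is therefore entirely exponentially small. The first even multipole beyond $l=2$ is $l=4$, supplying an $O(G_0^{-7})$ polynomial correction to $L^*_\pm$ and hence an $O(G^{-8})$ correction to $f^\pm$. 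Differentiating in $G_0$ yields the announced $f^\pm(G)=-\mu(1-\mu)\tfrac{3\pi}{2G^4}+O(G^{-8})$. The hard part will be justifying the exponential smallness of the oscillatory integrals uniformly in $\theta$ (standard contour deformation once one bounds $\mathrm{Im}\,\psi(\tau)$ on horizontal paths $\tau=\xi\pm i(1-\delta)$) and controlling the higher-order remainders in the scattering map expansion by $O(G^{-8})$, which uses the smoothness of the invariant manifold parametrizations provided by Theorem~\ref{thm:ExistenciaVarietats}.
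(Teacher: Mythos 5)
Your computational core (the Legendre multipole expansion, the vanishing dipole, the quadrupole integral giving $\pi\mu(1-\mu)/(2G_0^3)$, and the parity/oscillation argument for the remaining harmonics) is consistent with the harmonics that the paper simply quotes from \cite{GuardiaMS14}, and the symmetry reduction to a function of $G_0$ alone via \eqref{def:sigmastarpm} is also the paper's mechanism. However, there is a genuine gap at the heart of the argument: the first-order scattering-map formula you invoke from \cite{DelshamsLS08} lives in an explicit perturbative setting $H_0+\eps H_1$ with an error quadratic in $\eps$, and you never identify what $\eps$ is nor why the ``higher-order corrections'' are $\OO(G^{-8})$. Since $\mu\in(0,1/2]$ is fixed and of order one, $\Delta\UU$ carries no small prefactor; the smallness is hidden in the distance to infinity. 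The paper makes this precise by rescaling $x=G_1^{-1}\wt x$, $y=G_1^{-1}\wt y$, $G=G_1\wt G$, $s=G_1^{3}t$, after which the system is an $\OO(\delta)$ perturbation of the two-body problem with $\delta=G_1^{-4}$ treated as an independent parameter --- but a \emph{singular} perturbation, because the forcing is $2\pi/G_1^{3}$-periodic and its frequency blows up as $G_1\to\infty$, so the theory of \cite{DelshamsLS08} has to be adapted along the lines of \cite{DelshamsLS00}; one must also verify the scaling identity relating the rescaled Poincar\'e function to $L$ of \eqref{def:poincarepotential}, and note that the cylinder at infinity is parabolic rather than normally hyperbolic, so the scattering-map machinery itself only applies via the extension in \cite{DelshamsKRS14}. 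Finally, the resulting error is $\OO(\delta^{2})=\OO(G_1^{-8})$, uniform in the rescaled $\wt G_0\ge 1$ but not yet the claimed $\OO(G^{-8})$; the paper converts one into the other by exhausting $[G^*,+\infty)$ with compact intervals $[kG^*,(k+1)G^*]$ and taking $\delta_k=(kG^*)^{-4}$. None of this is supplied by your appeal to ``smoothness of the invariant manifold parametrizations.''

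A second, smaller gap concerns the exact form of the map. The proposition asserts $\widetilde\SSS_0^\pm(\al,G,s)=(\al+f^\pm(G),G,s)$ exactly, whereas your symmetry argument only gives $\partial_{\al_0}L^*_\pm=0$, i.e.\ $\Delta G=0$ to first order with uncontrolled remainders. The paper obtains exact preservation of $G$ from conservation of the Jacobi constant in the circular problem, and then symplecticity of the scattering map forces the twist form with $f^\pm$ a function of $G$ only; some such structural argument is needed before the perturbative computation of $f^\pm$ even makes sense as stated.
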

This proposition is proven in Appendix \ref{app:scattering}.

\subsection{Reduction to the Poincar\'e map}

We reduce the dimension of the system by considering the
stroboscopic-Poincar\'e map associated to the section $\Sigma = \{s
= s_0\}$,
\begin{equation}\label{def:Poincare_map}
\begin{array}{cccc}
 \PP:& \Sigma & \longrightarrow & \Sigma\\
      & (x,y,\al,G) & \mapsto & \PP (x,y,\al,G)
 \end{array}
\end{equation}
Then $\Lambda_{\al_0,G_0} =\widetilde \Lambda_{\al_0,G_0} \cap
\Sigma$ is a two parameter family of parabolic fixed points of $\PP$
with $1$-dimensional stable and unstable manifolds
\[
W^\varsigma(\Lambda_{\al_0,G_0}) = W^\varsigma(\widetilde
\Lambda_{\al_0,G_0}) \cap \Sigma, \quad \varsigma = u,s.
\]
Analogously, $\Lambda^{[G_1,+\infty)} = \widetilde \Lambda^{[G_1,+\infty)} \cap
\Sigma$ is the
$2$-dimensional normally parabolic invariant cylinder of infinity
with $3$-dimensional invariant stable and unstable manifolds
\[
W^\varsigma(\Lambda^{[G_1,+\infty)}) = W^\varsigma(\widetilde
\Lambda^{[G_1,+\infty)}) \cap \Sigma,
\quad \varsigma = u,s,
\]
which, for $e_0=0$, intersect transversally along two $2$-dimensional homoclinic
channels. The two scattering maps associated to these homoclinic
channels are given by
\begin{equation}\label{def:scatteringcircular}
 \SSS_0^\pm (\al, G)= ( \al+f^\pm (G), G),
\end{equation}
where $f^\pm$ is the function given in~\eqref{def:twist}. They do not
depend on the section~$\Sigma$.

\subsection{Scattering map of the elliptic problem}

Once we have analyzed the splitting of the invariant manifolds  of
$\Lambda^{[G_1,+\infty)}$ for  the circular problem and derived formulas for the two
scattering maps, now we consider the elliptic problem for $e_0$
small enough. Note that $e_0$ is a regular parameter of the elliptic
problem and, therefore, we can apply classical perturbative
arguments to the stroboscopic-Poincar\'e map~$\PP$ (see~\cite{DelshamsLS08}).

%\marginpar{dir algo de la petitesa de $e_0$}
Let $G_2>G_1>G^*$ (see \eqref{def:cylinderLambda} and Proposition
\ref{prop:ScatteringMapCircular}) be fixed. We call $\Lambda^{[G_1,G_2]}$ to
$\Lambda^{[G_1,+\infty)} \cap \{G_1\le G \le G_2\}$, which is compact and
invariant.  
Then, for
$e_0$ small enough, the stable and unstable manifolds of $\Lambda^{[G_1,G_2]}$
intersect transversally. Note that the smallness of $e_0$ depends on the chosen interval. The perturbative arguments imply that
there are two global homoclinic channels as in the circular problem.
These two channels define two scattering maps
\[
 \SSS^\pm:
\Lambda^{[G_1,G_2]}\longrightarrow \Lambda^{[G^*,+\infty)},
\]
which depend regularly
on $e_0$,
\begin{equation}\label{def:ScatteringElliptic}
 \SSS^\pm=\SSS^\pm_0+e_0\SSS^\pm_1+\OO\left(e_0^2\right),
\end{equation}
where $\SSS^\pm_0$ are the scattering maps of the circular problem
given by~\eqref{def:scatteringcircular}.  The theory
developed in~\cite{DelshamsLS08} does not directly apply to this
case since the invariant cylinder is not hyperbolic, but parabolic. Nevertheless,
the arguments in~\cite[Proposition~4]{DelshamsKRS14} show that the
theory of scattering maps do apply also to this problem. Hence, the
maps $\SSS^\pm$ are area preserving maps on the cylinder.

To construct oscillatory orbits of the elliptic problem, we need an
infinite transition chain: a sequence  of fixed points belonging to $\La^{[G_1,G_2]}$
 with  transversal heteroclinic connections
between consecutive points of the chain. By the definition of the scattering map, any bounded (forward or backward)  orbit of one of the scattering maps
provides such a chain.

To obtain bounded orbits of the scattering maps we observe that, if
$e_0$ is small enough, the maps $\SSS^\pm$ possess invariant curves.
Indeed, in view of~\eqref{def:twist}, $\SSS^\pm$ are twist
maps if $e_0$ is small enough. Then, we can apply  the following twist theorem,
due to Herman, from~\cite{Herman83c}:

\begin{theorem}[Twist theorem]\label{thm:KAM} Let $f:[0,1]\times\TT\rightarrow
    [0,1]\times\TT$ be an exact symplectic $\CCC^l$ map with $l>4$.
    Assume that $f=f_0+\de f_1$, where $f_0(I,\psi)=(I,\psi+A(I))$,
    $A$ is $\CCC^l$, $|\pa_I A|>M$ and $\|f_1\|_{\CCC^l}\leq 1$.
    Then, if $\de^{1/2} M^{-1}=\rho$ is sufficiently small, for a set
    of $\omega$ of Diophantine numbers of exponent $\theta=5/4$, we
    can find invariant curves which are the graph of $\CCC^{l-3}$
    functions $u_\omega$, the motion on them is $\CCC^{l-3}$ conjugate
    to the rotation by $\omega$, and
    $\|u_\omega\|_{\CCC^{l-3}}\leq C\de^{1/2}$.
    \end{theorem}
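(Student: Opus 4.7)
The plan is to establish the existence of invariant curves by a KAM-type Newton iteration in the space of $\CCC^{l-3}$ graphs. First, I would exploit the exact symplectic structure to write $f$ in terms of a generating function, parameterize a candidate invariant curve as a graph $I = u_\omega(\psi)$, and reformulate the requirement that $f$ preserves the curve and acts on it as the rigid rotation $\psi \mapsto \psi+\omega$ as a nonlinear functional equation $\mathcal{F}(u_\omega) = 0$.

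Second, starting from the approximate solution $u^{(0)} \equiv A^{-1}(\omega)$ given by the unperturbed $f_0$, I would run a Newton scheme in which each linearized step reduces, after changes of variable adapted to the approximate invariance, to cohomological equations of the form $h(\psi+\omega) - h(\psi) = g(\psi)$ with $\int g = 0$. These are solvable in Fourier series thanks to the Diophantine condition $|\omega k - m| \ge C|k|^{-\theta}$ with $\theta = 5/4$, at the cost of a fixed loss of derivatives controlled by $\theta$. The twist hypothesis $|\pa_I A| > M$ ensures invertibility of the linearized operator and produces the quantitative dependence $\rho = \de^{1/2} M^{-1}$ in the smallness condition.

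Third, because the regularity is only finite, $\CCC^l$ with $l>4$ rather than analytic, I would embed the iteration in a Nash--Moser framework: introduce smoothing operators truncating in Fourier at a scale $N_n$ growing geometrically with the iteration index, and balance the quadratic Newton convergence against the derivative losses coming from both the cohomological solver and the smoothing. Standard bookkeeping then delivers convergence of the iterates to a fixed point $u_\omega \in \CCC^{l-3}$ with $\|u_\omega\|_{\CCC^{l-3}} \le C\de^{1/2}$, where the square-root in $\de$ reflects the twist normal form: a perturbation of size $\de$ in the map produces a deformation of size $\de^{1/2}$ in the action direction, because the twist $A$ must absorb the perturbation through a change of frequency.

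The main obstacle is the sharp regularity bookkeeping: recovering exactly $\CCC^{l-3}$ with the explicit Diophantine exponent $\theta = 5/4$ requires careful interpolation between the smoothing scale $N_n$ and the Newton error, combined with Russmann-type optimal estimates for the cohomological equation, and a Whitney-smooth dependence on $\omega$ over the Diophantine Cantor set. For the purposes of the present paper, however, Theorem~\ref{thm:KAM} is invoked as a black box from~\cite{Herman83c}, and it will be applied to the scattering maps $\SSS^\pm$ with $\de = e_0$ and $M \sim G^{-5}$ supplied by~\eqref{def:twist}, so the full Nash--Moser analysis is not reproduced here.
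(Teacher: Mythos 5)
The paper does not prove this statement at all: Theorem~\ref{thm:KAM} is quoted verbatim from Herman~\cite{Herman83c} and invoked as a black box, which is exactly the position you take in your final paragraph. Your Nash--Moser sketch is a reasonable outline of how such a finitely differentiable twist theorem is established, but since the paper's own treatment is simply the citation, your proposal matches it in substance and nothing further is required here.
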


In our setting we have that the twist condition satisfies $|\pa_G A|\gtrsim
\mu G^{-5}$ whereas the $\de\lesssim e_0$. Therefore, for any
$G_2>G_1>G^*$ and $e_0$ small enough, the map
$\SSS^\pm$ has KAM
curves inside $\Lambda^{[G_1, G_2]}$. Any orbit
of~$\SSS^\pm$ in any of these KAM curves is bounded, and so are
all orbits between any two of these KAM curves.

\begin{comment}
To obtain oscillatory orbits we need to construct a transition
chain of fixed points of the Poincar\'e map $\PP$ in
\eqref{def:Poincare_map} which belong to the compact cylinder
$\Lambda$. That is, to obtain a sequence of fixed points
$\{\Lambda_{\alpha_k,G_k}\}_{k\in\NN}\subset \Lambda $ such that
$W^u(\Lambda_{\alpha_k,G_k})$ intersects transversally
$W^s(\Lambda)$ at a point $P_k$ which belongs to
$W^s(\Lambda_{\alpha_{k+1},G_{k+1}})$. It is sufficient to choose
$\Lambda_{\alpha_0,G_0}$ in one of the invariant curves
%or in a
%periodic orbit
of any of the scattering maps~$\SSS^{\pm}$  given by
Herman's Theorem~\ref{thm:KAM} in $\Lambda$ (or, for instance, in any of the
periodic orbits which are known to exist between two of those invariant
curves).
Its iterates by the
scattering map, $\Lambda_{\alpha_k,G_k} =
\SSS^k(\Lambda_{\alpha_0,G_0})$, $k\ge 1$, satisfy the required
condition. Hence, we have proved the following result.
\end{comment}

\begin{corollary}\label{coro:boundedorbitscattering}
Consider any constants $G_2>G_1>G^*$.
%, where $G^*$ is the constant
%introduced at the beginning of Section~\ref{sec:scatering_circular}.
%,such that $G_2-G_1$ has a lower bound independent of $G_0^*$
Then, for $e_0$ small enough (which may depend on $G_2$ and $G_1$),
the scattering maps $\SSS^\pm$ considered
in~\eqref{def:ScatteringElliptic} have orbits which remain for all
time in $\Lambda^{[G_1,G_2]}$.
\end{corollary}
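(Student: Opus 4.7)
The strategy is to show that on the compact cylinder $\Lambda^{[G_1,G_2]}$, each scattering map $\SSS^{\pm}$ is, up to a small perturbation in $e_0$, an integrable twist map to which Herman's Theorem \ref{thm:KAM} applies. The invariant curves it produces, being KAM curves on a two-dimensional cylinder, will confine orbits to a region of $\Lambda^{[G_1,G_2]}$.

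First, I would rescale the action variable $G \in [G_1,G_2]$ to $I \in [0,1]$ by an affine change of coordinates, so that $\SSS^{\pm}$ becomes a map on $[0,1]\times\TT$ of the form
\[
f(I,\alpha) = (I,\alpha + A(I)) + e_0\, f_1(I,\alpha;e_0),
\]
where $A(I)$ is the rescaled version of $f^{\pm}(G)$ and $f_1$ is uniformly bounded in $\CCC^l$ for some large fixed $l>4$ (here one uses the analyticity of the invariant manifolds away from infinity provided by Theorem \ref{thm:ExistenciaVarietats} and the regular dependence on $e_0$ expressed in \eqref{def:ScatteringElliptic}). The map is exact symplectic, as noted after \eqref{def:ScatteringElliptic}.

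Next I would verify the hypotheses of Theorem \ref{thm:KAM}. By differentiating \eqref{def:twist}, one gets $\pa_G f^{\pm}(G) = 6\pi\mu(1-\mu)\,G^{-5} + \OO(G^{-9})$, so on the compact interval $[G_1,G_2]$ (which lies in $G\ge G^*\gg 1$) the twist constant after rescaling is bounded below by some $M=M(\mu,G_1,G_2)>0$. The perturbation size is $\delta\lesssim e_0$ in view of \eqref{def:ScatteringElliptic}. Hence the smallness condition $\delta^{1/2}M^{-1}\ll 1$ holds as soon as $e_0$ is small enough depending on $G_1,G_2,\mu$. Herman's theorem then provides a family of invariant curves $\Gamma_\omega$ of $\SSS^{\pm}$ parametrized by Diophantine rotation numbers $\omega$, filling $\Lambda^{[G_1,G_2]}$ up to a set whose measure tends to zero with $e_0$.

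Finally, I would conclude by a topological trapping argument: pick two such invariant curves $\Gamma_{\omega_1},\Gamma_{\omega_2}\subset\Lambda^{[G_1,G_2]}$, each a $\CCC^{l-3}$ graph over the $\alpha$-circle, lying in the interior of $\Lambda^{[G_1,G_2]}$. Because the scattering map is an area-preserving homeomorphism of the cylinder onto its image and each $\Gamma_{\omega_i}$ is a non-contractible invariant Jordan curve, the closed annular region bounded by $\Gamma_{\omega_1}$ and $\Gamma_{\omega_2}$ is forward and backward invariant under $\SSS^{\pm}$, and remains inside $\Lambda^{[G_1,G_2]}$. Hence every orbit starting in this annulus (in particular, any orbit on $\Gamma_{\omega_1}$ itself) stays in $\Lambda^{[G_1,G_2]}$ for all positive and negative iterates, proving the corollary.

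The only delicate point is checking the hypotheses of Theorem \ref{thm:KAM} with uniform constants: one must make sure that the $\CCC^l$ norm of $f_1$ does not blow up as the size of the cylinder is changed and that the twist $A$ inherits enough regularity from $\SSS^{\pm}_0$. Both facts follow from the analyticity of the homoclinic manifolds on compact parts of the cylinder (Theorem \ref{thm:ExistenciaVarietats} and Proposition \ref{prop:ScatteringMapCircular}) together with standard smooth dependence of the scattering map on parameters, as used in \cite{DelshamsKRS14}.
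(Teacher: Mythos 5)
Your proposal is correct and follows essentially the same route as the paper: verify that $\SSS^\pm$ are exact symplectic, nearly integrable twist maps with twist $\gtrsim \mu G^{-5}$ and perturbation of size $\lesssim e_0$, apply Herman's Theorem~\ref{thm:KAM} to obtain KAM invariant curves inside $\Lambda^{[G_1,G_2]}$, and confine orbits on or between two such curves. Your explicit rescaling to $[0,1]\times\TT$ and the trapping annulus argument merely spell out what the paper states in one line ("any orbit in these KAM curves is bounded, and so are all orbits between any two of them").
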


As we have explained, coming back to the original Poincar\'e map
\eqref{def:PoincareMap}, this bounded orbit of the scattering map
corresponds to a sequence of fixed points
$\{\Lambda_{\alpha_k,G_k}\}_{k\in\NN}\subset \Lambda^{[G_1,G_2]}$ such
that
$W^u(\Lambda_{\alpha_k,G_k})$ intersects transversally
$W^s(\Lambda^{[G_1,G_2]})$ at a point $P$ which belongs to
$W^s(\Lambda_{\alpha_{k+1},G_{k+1}})$.

\begin{remark}\label{rmk:ConditionDiffusion}
To obtain oscillatory motions we have looked for an infinite
transition chain with bounded angular momentum, or equivalently,  for a bounded
orbit of one of
the scattering maps $\SSS^\pm$. If we want orbits with a drift
in the angular momentum (see Remark
\ref{rmk:RangeParametersDiffusion}), we have to look for a
transition chain connecting  fixed points of $\PP$ in
\eqref{def:PoincareMap} with a large difference in angular momentum.
In this case it suffices to  look for a finite transition chain.

For the oscillatory motions,  we have used the fact
that the scattering maps  $\SSS^\pm$ are nearly integrable and
possess KAM tori. KAM tori act as barriers for the orbits of the scattering
maps and therefore,  using only one scattering map,
it is impossible to construct orbits with a large drift in $G$.
The usual strategy to
prove  the  existence of
Arnold diffusion (see for instance \cite{Arnold64, DelshamsLS06a}) is to
combine one scattering map with the inner dynamics of the invariant cylinder
induced
by the
Poincar\'e map \eqref{def:PoincareMap}. Combining these two dynamics one
obtains Arnold diffusion. Here this approach is not possible since the inner
dynamics is trivial (the cylinder $\Lambda^{[G_1,G_2]}$ is filled by fixed
points).
Thus, we rely on an idea developed in \cite{DelshamsKRS14} which is to combine
the two scattering maps $\SSS^+$ and $\SSS^-$.

In \cite{Calvez07}, Le Calvez showed the following: consider
two area preserving twist maps on a cylinder. Assume that they do not have
common invariant curves. Then,  combining them one can obtain orbits with a
drift in the action component. Thus, as long as the maps $\SSS^\pm$ did
not have common invariant curves, we could obtain transition chains with a
large drift in $G$. This fact has proven to be true for $\mu$ small enough in
\cite{DelshamsKRS14}. Nevertheless, it is not straightforward
to verify it  in the present setting. That is the non-degeneracy
assumption mentioned in Remark \ref{rmk:RangeParametersDiffusion}.
\end{remark}

\section{Shadowing orbits}\label{sec:Shadowing}

The second ingredient in the proof of Theorem~\ref{thm:Main} is what
is usually called a Lambda or Shadowing lemma. That is, to analyze
how orbits close to the stable manifold of one of the fixed points
of~$\La^{[G_1,G_2]}$ (see Theorem~\ref{thm:ExistenciaVarietats})
evolve and stretch along the unstable invariant manifold of the
fixed point. Such results allow to shadow a concatenation of
heteroclinic orbits which connect different (or the same) fixed
points.

Usually in the literature of Arnol'd diffusion, one considers more
general Lambda lemmas which allow to shadow invariant manifolds of
more general objects  (see for instance \cite{Marco96, Cresson97,
FontichM98,Cresson01,Gidea04a,Gidea04b,Bolotin06, DelshamsGR13, Sabbaghh13}).
Nevertheless, such results deal with invariant manifolds of objects
which are (normally or partially) hyperbolic and not parabolic as in the present
setting.

We generalize such result to the parabolic setting.
Nevertheless, we are dealing with the simplest case, that
is, fixed points for the $4$-dimensional Poincar\'e map, which correspond
to
periodic orbits for the flow. In the general hyperbolic setting
usually one considers $\CCC^1$ Lambda lemmas even if for the
shadowing results only a $\CCC^0$ version is needed. The reason is
that to have $\CCC^0$ estimates including the tangential directions,
one needs to compute the $\CCC^1$ estimates at the same time. In the
present setting, since we are dealing with the invariant manifolds
of fixed points of maps, there are not tangential directions and,
therefore, one can directly compute  the $\CCC^0$ estimates \emph{\`a
la Shilnikov} (see \cite{Shilnikov67}). This is  a considerable
simplification since in the parabolic setting the classical $\CCC^1$
estimates are no longer true, as shown in~\cite{GorodetskiK12}.

\begin{lemma}\label{lem:LambdaLemma} Let $\Gamma$ be a curve which transversally
intersects $W^s(\Lambda^{[G_1,G_2]})$
at  $P\in W^s(\Lambda_{\alpha_0,G_0})$ for some
$\Lambda_{\alpha_0,G_0}\in\Lambda^{[G_1,G_2]}$. Let $Z$ be a
point on $W^u(\Lambda_{\alpha_0,G_0})$. For any
 neighborhood  $\mathcal U$ of $Z$ in $\RR^4$ and any $\varepsilon>0$, there
exists a point $a\in B_{\varepsilon}(P) \cap \Gamma$ and a positive integer $n$
(which depends
on $Z$, $\varepsilon$ and $\UU$) such that $\PP^n(a)\in\UU$.

As a consequence $W^u(\Lambda_{\alpha_0,G_0})\subset \overline{\cup_{j\geq 0} \PP^j(\Gamma)}$.
\end{lemma}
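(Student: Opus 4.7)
The plan is to prove this $\CCC^0$ Lambda lemma \emph{\`a la} Shilnikov in local coordinates adapted to the parabolic fixed point $\Lambda_{\alpha_0,G_0}$ of $\PP$. Using the parameterizations $\gamma^{u,s}_{\alpha_0,G_0}$ from Theorem~\ref{thm:ExistenciaVarietats} and their regular dependence on $(\alpha_0,G_0)$, I would introduce coordinates $(u,v,\xi,\eta)$ on a neighborhood of $\Lambda_{\alpha_0,G_0}$ in which $\Lambda^{[G_1,G_2]}=\{u=v=0\}$, the local leaf $W^u_\loc(\Lambda_{\alpha_0,G_0})$ is the $u$-axis and $W^s_\loc(\Lambda_{\alpha_0,G_0})$ is the $v$-axis, while $W^u(\Lambda^{[G_1,G_2]})=\{v=0\}$ and $W^s(\Lambda^{[G_1,G_2]})=\{u=0\}$. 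Straightening both foliations simultaneously requires precisely the $\CCC^\infty$ regularity stressed after Theorem~\ref{thm:ExistenciaVarietats}. In these coordinates the local map has a $v$-contracting, $u$-expanding block that operates at polynomial (parabolic) rates, while the central block $(\xi,\eta)$ is trivial to leading order because $\Lambda^{[G_1,G_2]}$ is pointwise fixed.

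I would parameterize $\Gamma$ near $P$ as $\tau\mapsto\Gamma(\tau)$ with $\Gamma(0)=P$. Transversality of $\Gamma$ to the $3$-dimensional $W^s(\Lambda^{[G_1,G_2]})=\{u=0\}$ forces the tangent $\dot\Gamma(0)$ to have a nonzero $u$-component, so in the adapted coordinates $\Gamma(\tau)$ has $u$-component $c\tau+O(\tau^2)$ with $c\ne 0$, $v$-component $v^*+O(\tau)$ with $v^*\ne 0$ the stable coordinate of $P$, and central components $(\xi,\eta)$ of size $O(\tau)$. Given $Z\in W^u(\Lambda_{\alpha_0,G_0})$ and a neighborhood $\UU$, fix a large $N$ so that $\PP^{-N}(Z)$ lies inside the local box on the $u$-axis, with coordinates $(u_N,0,0,0)$ and $u_N$ as small as needed; by continuity of $\PP^N$ it suffices to drive $\PP^n(\Gamma(\tau))$ into a prescribed neighborhood of $\PP^{-N}(Z)$.

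The heart of the argument is a Shilnikov-type intermediate-value step. For each large $n$ the function $\tau\mapsto u_n(\tau)$, the $u$-coordinate of $\PP^n(\Gamma(\tau))$, is continuous, vanishes at $\tau=0$ (since $P\in W^s(\Lambda_{\alpha_0,G_0})=\{u=\xi=\eta=0\}$ is invariant), and exceeds any prescribed threshold for $\tau$ large enough. By the intermediate-value theorem there exists $\tau_n$ with $u_n(\tau_n)=u_N$, and necessarily $\tau_n\to 0$ as $n\to\infty$ because the parabolic sojourn time needed to let the unstable coordinate grow from $O(\tau)$ up to $u_N$ diverges as $\tau\to 0$. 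Along this sequence, the stable coordinate $v_n(\tau_n)\to 0$ by the polynomial stable decay, and $(\xi_n(\tau_n),\eta_n(\tau_n))=O(\tau_n)\to 0$ because $\Gamma(\tau)$ starts with $O(\tau)$ central components and the trivial inner dynamics on $\Lambda^{[G_1,G_2]}$ adds no leading-order drift. Hence $\PP^n(\Gamma(\tau_n))\to\PP^{-N}(Z)$, so $\PP^{n+N}(\Gamma(\tau_n))\to Z$, and choosing $n$ large enough yields $a=\Gamma(\tau_n)\in B_\varepsilon(P)\cap\Gamma$ with $\PP^{n+N}(a)\in\UU$. The corollary follows by taking $Z$ dense in $W^u(\Lambda_{\alpha_0,G_0})$.

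The main obstacle is the parabolic nature of $\Lambda_{\alpha_0,G_0}$: tangent-vector estimates do not contract or expand exponentially, so the classical $\CCC^1$ Lambda-lemma machinery breaks down and, as indicated via \cite{GorodetskiK12}, the $\CCC^1$ statement cannot hold in its usual form. The above route bypasses this by staying at the $\CCC^0$ level and trading differential estimates for the continuity and monotonicity of $\tau\mapsto u_n(\tau)$; this is viable only because $\Lambda_{\alpha_0,G_0}$ is a genuine fixed point of $\PP$ with no tangential direction, so no normally elliptic bookkeeping competes with the polynomial hyperbolic rates along $(u,v)$.
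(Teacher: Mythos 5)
Your overall route --- straighten the invariant manifolds and both foliations (rightly tied to the $\CCC^\infty$ regularity of Theorem~\ref{thm:ExistenciaVarietats}), estimate the passage \emph{\`a la} Shilnikov, and close with a continuity/intermediate-value step --- is exactly the paper's, and you correctly identify that the parabolicity forces a $\CCC^0$ rather than a $\CCC^1$ argument. However, your control of the central components $(\xi,\eta)$ is not justified as written, and this is precisely the technical crux. You assert $(\xi_n(\tau_n),\eta_n(\tau_n))=O(\tau_n)$ because ``the trivial inner dynamics on $\Lambda^{[G_1,G_2]}$ adds no leading-order drift'', but the orbit lies in a tubular neighbourhood of the cylinder, not on it, and the coupling of the central directions to $(u,v)$ does produce drift. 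What actually keeps this drift small is not triviality of the inner dynamics per se but the structural fact that, once the stable and unstable \emph{foliations} (not merely the manifolds) are straightened, the central component of the field vanishes on $\{u=0\}\cup\{v=0\}$ and therefore carries a factor $uv$: this is the $\dot z = f(q,p)^2\,qp\,\OO_1$ term in~\eqref{def:DefModel}. Without this factor (e.g.\ if $\dot z$ were merely $O(u)+O(v)$) the drift would be of order $u_N$ and the lemma would fail, so this point cannot be waved away.

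One must then bound $\int uv$ over the long parabolic passage, and the paper's device for this is the time rescaling $ds/dt=f(q,p)^3$, which turns the polynomial (parabolic) rates into exponential ones and renders Gronwall tractable (\eqref{ModResc}--\eqref{eq:GandTheta}). The resulting estimate is $\|z(S)-z(0)\|\lesssim q(0)p(0)\,e^{S/5}\approx p_0\,q_f^{1/5}\,\delta^{4/5}$, which does tend to zero but is \emph{not} $O(\delta)$; your claimed $O(\tau_n)$ is therefore incorrect as an estimate, even though the qualitative conclusion survives. Separately, your intermediate-value step on $\tau\mapsto u_n(\tau)$ for fixed integer $n$ is delicate: it is not clear that $u_n(\cdot)$ attains $u_N$ for that particular $n$ while the orbit remains in the box. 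The paper sidesteps this by running the continuity argument on the \emph{continuous} transition time $T(\delta)$ of the flow to the section $\{q=q_f\}$: since $T(\delta)\to\infty$ continuously as $\delta\searrow 0$, it necessarily passes through every multiple of $2\pi$, which produces the sequence $\delta_k$ of Proposition~\ref{prop:Transition}.
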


The proof of Lemma~\ref{lem:LambdaLemma} is placed in
Section~\ref{sec:proof_of_lem:LambdaLemma}. The lemma  follows from
Proposition~\ref{prop:Transition}.
%which is a more general result.

\begin{remark}\label{rmk:LambdaReversed} By reversing time,
one can consider curves $\Gamma$ intersecting transversally
$W^u(\Lambda^{[G_1,G_2]})$
at  $P\in W^u(\Lambda_{\alpha_0,G_0})$. Then, we have the following statement.
Let $Z$ be a
 point on $W^s(\Lambda_{\alpha_0,G_0})$. For any
 neighborhood $\mathcal U$ of $Z$ in $\RR^4$ and any $\varepsilon>0$
there exists a point $a\in B_{\varepsilon}(P) \cap \Gamma$ and a
positive integer $n$ (which depends on $Z$, $\varepsilon$ and $\UU$)
such that $\PP^{-n}(a)\in\UU$.

As a consequence $W^s(\Lambda_{\alpha_0,G_0})\subset \overline{\cup_{j\geq 0} \PP^{-j}(\Gamma)}$.
\end{remark}

%\section{Existence of oscillatory motions}\label{sec:SketchProof}
% Sections \ref{sec:InvManifoldsInfinity} and \ref{sec:Shadowing} provide the
%main ingredients to prove the existence of oscillatory orbits and Arnold
%diffusion.

Now it only remains to apply Lemma~\ref{lem:LambdaLemma} to shadow the
transition chain given in Corollary~\ref{coro:boundedorbitscattering}.  This
argument is standard, one can see, for instance, \cite{DelshamsLS00}. We include
it here for completeness.

\begin{proposition} \label{prop:shadowing} Let $\{\Lambda_{\alpha_k,G_k}\}_{k\geq 0}$ be a family
of parabolic fixed points in~$\Lambda^{[G_1,G_2]}$ of the Poincar\'{e} map
$\PP$ in
\eqref{def:PoincareMap} such that, for all~$k$,
$W^u(\Lambda_{\alpha_k,G_k})$ intersects transversally
$W^s(\Lambda^{[G_1,G_2]})$ at $p_k\in W^s(\Lambda_{\alpha_{k+1},G_{k+1}})$.
Consider two sequences of real numbers $\{\delta_k\}_{k\geq 0}$ and
$\{\wt \delta_k\}_{k\geq 0}$, $\delta_k,\wt \delta_k >0$.
Then, there exist $a\in B_{\delta_0}(\Lambda_{\alpha_0,G_0})$
 and two sequences of natural numbers $\{N_k\}_{k\geq
0}$, $\{\wt N_k\}_{k\geq 0}$, $N_k < \wt N_k < N_{k+1} <
\wt{N}_{k+1}$ for all $k$, such that $\PP^{N_k}(a) \in
B_{\delta_k}(\Lambda_{\alpha_k,G_k})$ and $\PP^{\wt N_k}(a) \in
B_{\wt \delta_k}(p_k)$ for all $k$.
\end{proposition}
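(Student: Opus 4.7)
My plan is to prove the proposition by iterating Lemma~\ref{lem:LambdaLemma} to build a nested sequence of non-empty compact sub-arcs, whose common intersection will contain the shadowing point~$a$. The induction invariant at stage~$k$ will be: a smooth embedded arc $\Gamma_k\subset B_{\delta_0}(\La_{\alpha_0,G_0})$ and integers $0=N_0<\wt N_0<\cdots<N_k<\wt N_k$ such that every $a\in\Gamma_k$ satisfies $\PP^{N_j}(a)\in B_{\delta_j}(\La_{\alpha_j,G_j})$ and $\PP^{\wt N_j}(a)\in B_{\wt\delta_j}(p_j)$ for $0\le j\le k$, and such that $\PP^{\wt N_k}(\Gamma_k)$ is transverse to $W^s(\La^{[G_1,G_2]})$ at some point $P_k\in W^s(\La_{\alpha_{k+1},G_{k+1}})$ near $p_k$.

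For the base case $k=0$, since $p_0\in W^u(\La_{\alpha_0,G_0})$ I may choose $\wt N_0$ so large that $\PP^{-\wt N_0}(p_0)$ lies in $B_{\delta_0/2}(\La_{\alpha_0,G_0})$. I take $\Gamma_0$ to be a short smooth arc through $\PP^{-\wt N_0}(p_0)$ inside $B_{\delta_0}(\La_{\alpha_0,G_0})$ whose image $\PP^{\wt N_0}(\Gamma_0)$ passes through $p_0$ transverse to $W^s(\La^{[G_1,G_2]})$; such a $\Gamma_0$ exists because the ambient section is $4$-dimensional, $W^s(\La^{[G_1,G_2]})$ is $3$-dimensional, and $p_0$ is a transverse heteroclinic.

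For the inductive step $k\to k+1$ I apply Lemma~\ref{lem:LambdaLemma} to the arc $\PP^{\wt N_k}(\Gamma_k)$ with $P=P_k$, $Z=p_{k+1}\in W^u(\La_{\alpha_{k+1},G_{k+1}})$, target neighborhood $\UU=B_{\wt\delta_{k+1}}(p_{k+1})$, and some small $\varepsilon>0$ to be fixed later: this produces a point $b\in B_\varepsilon(P_k)\cap\PP^{\wt N_k}(\Gamma_k)$ and an integer $m_{k+1}$ with $\PP^{m_{k+1}}(b)\in\UU$. Because $P_k\in W^s(\La_{\alpha_{k+1},G_{k+1}})$, the forward orbit of $P_k$ enters $B_{\delta_{k+1}/2}(\La_{\alpha_{k+1},G_{k+1}})$ at some time $n_{k+1}<m_{k+1}$; by continuity, taking $\varepsilon$ small enough then forces $\PP^{n_{k+1}}(b)\in B_{\delta_{k+1}}(\La_{\alpha_{k+1},G_{k+1}})$. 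I set $N_{k+1}=\wt N_k+n_{k+1}$ and $\wt N_{k+1}=\wt N_k+m_{k+1}$, pull back by $\PP^{-\wt N_k}$ and let $\Gamma_{k+1}$ be a short sub-arc of $\Gamma_k$ around $\PP^{-\wt N_k}(b)$, chosen short enough that continuity propagates all shadowing conditions for $j\le k+1$ to every point of $\Gamma_{k+1}$, and arranged so that $\PP^{\wt N_{k+1}}(\Gamma_{k+1})$ meets $W^s(\La^{[G_1,G_2]})$ transversely at a point $P_{k+1}\in W^s(\La_{\alpha_{k+2},G_{k+2}})$ near $p_{k+1}$.

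The main obstacle is precisely this last requirement, namely the preservation of the transversality invariant at each step. Since Lemma~\ref{lem:LambdaLemma} is only a $\CCC^0$-statement, the image arc $\PP^{\wt N_{k+1}}(\Gamma_{k+1})$ is only $\CCC^0$-close to $W^u(\La_{\alpha_{k+1},G_{k+1}})$, so one does not directly inherit the tangent direction of $W^u$ at the iterate near $p_{k+1}$. I would resolve this by using the freedom to choose both the initial arc $\Gamma_0$ and the parameter $\varepsilon$ flexibly inside the $4$-dimensional ambient section, combined with the openness of the transversality condition near the transverse heteroclinic $p_{k+1}$, so that a suitable transverse sub-arc can always be extracted. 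Once the induction is complete, $\Gamma_0\supset\Gamma_1\supset\cdots$ is a nested sequence of non-empty compact sets, and any $a\in\bigcap_{k\ge 0}\Gamma_k$, which exists by compactness, is a shadowing orbit of the heteroclinic chain with the required properties.
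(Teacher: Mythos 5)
There is a genuine gap, and it sits exactly where you flag it yourself: the inductive invariant that $\PP^{\wt N_k}(\Gamma_k)$ meets $W^s(\La^{[G_1,G_2]})$ transversally at a point lying precisely on $W^s(\La_{\alpha_{k+1},G_{k+1}})$ cannot be justified with the tools at hand. Lemma~\ref{lem:LambdaLemma} is purely $\CCC^0$: it places one point of the iterated arc in a prescribed neighborhood of a point of $W^u(\La_{\alpha_{k+1},G_{k+1}})$, but it gives no control whatsoever on the tangent directions of $\PP^{\wt N_{k+1}}(\Gamma_{k+1})$, and the derivative of $\PP^{\wt N_{k+1}}$ along an orbit passing close to the parabolic cylinder is not controlled --- the paper emphasizes (citing \cite{GorodetskiK12}) that the classical $\CCC^1$ inclination estimates are \emph{false} in this parabolic setting, which is precisely why only a $\CCC^0$ lemma is stated and proved. ``Openness of transversality'' does not rescue the step: an arc that is merely $\CCC^0$-close to $W^u(\La_{\alpha_{k+1},G_{k+1}})$ may cross the $3$-manifold $W^s(\La^{[G_1,G_2]})$ non-transversally, and even when it does cross transversally the crossing point will in general lie on the stable fiber of a nearby fixed point rather than on $W^s(\La_{\alpha_{k+2},G_{k+2}})$ itself, whereas the next application of Lemma~\ref{lem:LambdaLemma} needs $P$ and $Z$ on the stable and unstable manifolds of the \emph{same} fixed point. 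So the induction, as designed, does not close.

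The paper's proof is arranged so that no transversality ever has to be transported by the dynamics. Instead of iterating an initial arc forward, at each stage it applies the Lambda lemma, in the time-reversed form of Remark~\ref{rmk:LambdaReversed}, to the genuine invariant curve $W^s(\La_{\alpha_{k+1},G_{k+1}})$, whose transversal intersection with the unstable manifold of the cylinder at $p_k$ is part of the hypothesis of Proposition~\ref{prop:shadowing}. This produces a point $y_k\in B_{\wt\delta_k}(p_k)\cap W^s(\La_{\alpha_{k+1},G_{k+1}})$ whose backward iterate lands in the previously constructed neighborhood, while its forward orbit, being on the stable fiber, enters $B_{\delta_{k+1}}(\La_{\alpha_{k+1},G_{k+1}})$ for free; pulling everything back by continuity yields nested open sets $\ol U_{k+1}\subset \ol U_k\subset B_{\delta_0}(\La_{\alpha_0,G_0})$, and any point of $\bigcap_{k\ge 0}\ol U_k$ satisfies the claim. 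To keep your forward-iteration scheme you would need a transversality-propagating ($\CCC^1$-type) Lambda lemma for the parabolic cylinder, which is not available here; the simpler fix is to restructure the induction along the paper's lines, always feeding the lemma the stable manifolds of the fixed points of the chain rather than forward images of $\Gamma_0$.
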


\begin{proof}
The proof follows closely the arguments in~\cite{DelshamsLS00}.

We are going to construct a sequence of nested non-empty compact
sets $\ol U_i \subset B_{\delta_0}(\Lambda_{\alpha_0,G_0})$ with
the following property: if $p \in \ol U_i$, its forward orbit
by~$\PP$ visits the balls $\ol
B_{\delta_k}(\Lambda_{\alpha_k,G_k})$ and  $B_{\tilde
\delta_k}(p_k)$ for $0 \le k \le i$.

Let $x_0 \in B_{\delta_0}(\Lambda_{\alpha_0,G_0}) \cap
W^s(\Lambda_{\alpha_0,G_0})$. We can choose an open neighborhood
$U_0$ of $x_0$ such that $U_0 \subset \ol U_0 \subset
B_{\delta_0}(\Lambda_{\alpha_0,G_0})$. By
Lemma~\ref{lem:LambdaLemma} (see Remark \ref{rmk:LambdaReversed}),
there exist a point $y_0 \in B_{\tilde \delta_0}(p_0)\cap
W^s(\Lambda_{\alpha_1,G_1})$ and an integer $n_0 >0$ such that
$\PP^{-n_0}(y_0) \in U_0$. By continuity of the map $\PP$, there
exists an open neighborhood $V_0$ of $y_0$ such that
$\overline{\PP^{-n_0}(V_0) } \subset U_0$. Since $y_0 \in
W^s(\Lambda_{\alpha_1,G_1})$, there exists $n_1>0$ such that
$\PP^{n_1} (y_0) = x_1 \in B_{\delta_1}(\Lambda_{\alpha_1,G_1}) \cap
W^s(\Lambda_{\alpha_1,G_1})$. By continuity, there exists an open
neighborhood $\tilde U_1$ of $x_1$, such that $\PP^{-n_1}(\tilde
U_1) \subset V_0$. We define $U_1 = \PP^{-n_0-n_1}(\tilde U_1)$. By
construction, $\overline{U_1}\subset U_0 \subset \overline{U_0}$ and
if $x\in \overline{U_1}$, $\PP^{n_0}(x) \in V_0 \subset B_{\tilde
\delta_0}(p_0)$ and $\PP^{n_0+n_1}(x) \in \overline{ U_1} \subset
B_{ \delta_1}(\Lambda_{\alpha_1,G_1})$. We take $N_0 =0$,
$\tilde N_0 = n_0$ and $N_1 = n_0+n_1$.

The proof follows by induction. Then, any point in $\cap_{i\geq 0} \ol U_i\neq
\emptyset$, satisfies the claim.

\end{proof}

Proof of Theorem \ref{thm:Main} is a direct consequence of this proposition.

\begin{proof}[Proof of Theorem \ref{thm:Main}]
Let $\{\Lambda_{\al_k,G_k}\}_{k\geq 1}$ be one of the bounded orbits
given in Corollary
\ref{coro:boundedorbitscattering}. Since
$\La_{\al_k,G_k}=\SSS^+(\La_{\al_{k-1},G_{k-1}})$, $k\geq 1$, the unstable
manifold of $\La_{\al_{k-1},G_{k-1}}$ intersects transversally
the stable manifold of the cylinder in a point $p_k$ in the stable
fiber of $\La_{\al_k,G_k}$.

Now we apply Proposition~\ref{prop:shadowing}.  We take $\de_k=1/k$
(or any other positive sequence with limit $0$) and $\tilde
\delta_k=\tilde \delta$ small enough so that the balls $B_{\wt\de}(p_k)$ do
not intersect the cylinder $\Lambda^{[G_1,G_2]}$. Let $N_k$ and $\tilde N_k$ be
the natural numbers and $a\in B_{\delta_0}(\Lambda_{\alpha_0,G_0})$ be the point
given by Proposition~\ref{prop:shadowing}. Then, the orbit of $a$
is oscillatory. Indeed, $\liminf_{k \rightarrow+\infty}\text{\rm
dist}( \PP^{N_k}(a),\Lambda_{\alpha_k,G_k})=0$ since
$\delta_k\rightarrow 0$ as $k\rightarrow +\infty$ and
$\limsup_{t\rightarrow+\infty}\text{\rm dist}( \PP^{\tilde
N_k}(a),\Lambda_{\alpha_k,G_k})>0$ since the balls $B_{\wt\de}(p_k)$
do not intersect $\{x=y=0\}$. Pulling back the McGehee change of
variables $r=2/x^2$ and considering the original coordinates, we obtain
oscillatory orbits.
\end{proof}

\section{A $\CCC^0$ Lambda lemma: proof of Lemma~\ref{lem:LambdaLemma}}
\label{sec:proof_of_lem:LambdaLemma}

\subsection{Local behavior close to infinity}

System~\eqref{eq:3bpInMcGehee} can be written as
 $r=2/x^2$,
\[
 \begin{split}
 \dot x&= -\frac{1}{4} x^3 y\\
  \dot y&=\frac{1}{8}G^2x^6+\pa_r
U\left(2x^{-2},\al,t\right)=-\frac{1}{4}x^4+x^6\OO_1\\
\dot \al&=  \frac{1}{4} G x^4\\
\dot G&=\pa_\al U\left(2x^{-2},\al,t\right)=\beta(\al,t)x^6+x^8\OO_1,
 \end{split}
\]
where $\beta$ is a function which is $2\pi$-periodic in its variables and
$\OO_k$ stands for $\OO(\|(x,y)\|^k)$.

To straighten the lowest order terms, we make the change
\[
 \begin{split}
q&=\frac{1}{2}(x-y)\\
p&=\frac{1}{2}(x+y)
 \end{split}
\]
and to make $\al$ a central variable, we consider the new variable
$\theta=\al+Gy$. Then, we have the new system
\begin{equation}
\label{eq:system_in_McGehee_coordinates}
 \begin{split}
 \dot q&= \frac{1}{4} (q+p)^3(q+(q+p)^3\OO_0)\\
 \dot p&= -\frac{1}{4} (q+p)^3(p+(q+p)^3\OO_0)\\
 \dot \theta&=(q+p)^6\OO_0\\
\dot G&=(q+p)^6\OO_0\\
 \dot t &=1.
 \end{split}
\end{equation}
This system is a particular case of a system of the form
\begin{equation}
\label{eq:general_system_in_McGehee_coordinates}
 \begin{split}
 \dot q&= \frac{1}{4} (q+p)^3(q+(q+p)^3\OO_0)\\
 \dot p&= -\frac{1}{4} (q+p)^3(p+(q+p)^3\OO_0)\\
 \dot z &=(q+p)^6\OO_0\\
 \dot t &=1
 \end{split}
\end{equation}
where $(q,p,z)\in \RR\times \RR \times K$, $K\subset \RR^n$ a compact set. Now
$\OO_k =
\OO(\|(q,p)\|^k)$ are $T$-periodic functions on~$t$ and the estimate is uniform
for $z\in K$.

Notice that
for any $z_0\in \RR^n$, the set
\[
%\begin{equation}\label{def:Lambda_alpha_0-G_0}
\wt \Lambda_{z_0} = \{q = p = 0, \; z = z_0,\;t\in\TT\}
\]
%\end{equation}
is a periodic orbit of
system~\eqref{eq:general_system_in_McGehee_coordinates}. In this section we
study its invariant manifolds. The set $\wt \Lambda =
\cup_{z_0\in K} \wt
\Lambda_{z_0}$ is invariant. In the case case of
system~\eqref{eq:system_in_McGehee_coordinates}, when $z_0 =
(\alpha_0,G_0)$, the set $\wt \Lambda$ corresponds to the
``parabolic infinity''.

\begin{proposition}
\label{prop:local_invariant_manifolds} Consider the
system~\eqref{eq:general_system_in_McGehee_coordinates}. The set
$\wt \Lambda_{z_0}$ possesses invariant stable and unstable
manifolds, $W^u_{z_0}$ and $W^s_{z_0}$. More concretely,
\[
W^u_{z_0} = \{(q,p,z,t)\mid (p,z) = \gamma^u(q,z_0,t), \; q \in
[0,q_0),t\in\TT\}
\]
where
\begin{enumerate}
\item $\gamma^u$ is $\CCC^{\infty}$ with respect to~$q$ and analytic with
respect to $z_0$ and $t$,
\item $\pi_p \gamma^u(q,z_0,t) =
\OO(q^2)$, $\pi_{z} \gamma^u(q,z_0,t) = z_0 + \OO(q^3)$, where $\pi_p$
and $\pi_{z}$ are the corresponding projections.
\end{enumerate}
The analogous statement holds for $W^s$, as a graph over~$p$.
\end{proposition}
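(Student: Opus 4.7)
My plan is to prove this by the parameterization method for parabolic invariant manifolds, following \cite{CabreFL03a,BaldomaFdlLM07}, adapted to the periodic time $t\in\TT$ and parameters $z_0\in K$. Writing $\gamma^u=(\gamma_p,\gamma_z)$, the requirement that the graph $\{(q,\gamma_p(q,z_0,t),\gamma_z(q,z_0,t),t)\}$ be invariant under the flow of \eqref{eq:general_system_in_McGehee_coordinates} translates into the quasilinear system
\begin{align*}
\partial_t\gamma_p + (\partial_q\gamma_p)\,F_q(q,\gamma_p,t) &= F_p(q,\gamma_p,\gamma_z,t),\\
\partial_t\gamma_z + (\partial_q\gamma_z)\,F_q(q,\gamma_p,t) &= F_z(q,\gamma_p,\gamma_z,t),
\end{align*}
where $F_q,F_p,F_z$ denote the right-hand sides of \eqref{eq:general_system_in_McGehee_coordinates}. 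The unstable manifold is singled out by the asymptotic condition $\gamma_p(0,z_0,t)=0$, $\gamma_z(0,z_0,t)=z_0$, describing the attractor in backward time.

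First I would solve this system formally by a power series in $q$,
\begin{equation*}
\gamma_p(q,z_0,t)=\sum_{k\ge 2} a_k(z_0,t)\,q^k,\qquad \gamma_z(q,z_0,t)=z_0+\sum_{k\ge 3} b_k(z_0,t)\,q^k,
\end{equation*}
with the minimal orders dictated by the vanishing of $F_p$ and $F_z$ at $(p,z)=(0,z_0)$. Because $F_q=\tfrac14 q^4+\OO(q^6)$ on $p=0$, substituting and matching powers of $q$ produces at each order $k$ a first-order linear equation of the shape $\partial_t a_k = G_k(z_0,t;a_2,\dots,a_{k-1},b_3,\dots,b_{k-1})$ (and analogously for $b_k$), whose inhomogeneity is analytic in $z_0$ and $2\pi$-periodic in $t$. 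Either the equation is directly solvable or its mean-value solvability condition fixes a constant of integration left free in a previous step; this standard bookkeeping is what singles out the unstable-manifold expansion and yields the claimed leading orders in~(2).

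To upgrade the formal series to a genuine $\CCC^\infty$ solution, I would truncate at a large order $N$, writing $\gamma^u = \gamma^u_{(N)} + \eta$, so that $\eta$ satisfies an equation whose inhomogeneity is of order $\OO(q^{N+1})$. Integrating along the characteristics of $\dot q = F_q$, and exploiting the polynomial escape rate $q(t)\sim c|t|^{-1/3}$ as $t\to -\infty$ of orbits on $W^u$, one recasts the invariance equation as a fixed-point problem on a Banach space of continuous functions on $[0,q_0)\times K\times\TT$ equipped with the weighted norm $\|\eta\|_N=\sup q^{-N-1}|\eta|$. For $q_0$ small enough the associated operator is a contraction; its fixed point agrees with the formal series to order $N$, and since $N$ is arbitrary this gives $\CCC^\infty$ smoothness in $q$ at $q=0$. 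Analyticity in $(z_0,t)$, and in $q$ for $q>0$, is then obtained by complexifying the fixed-point argument and invoking the analyticity of $F$.

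The main obstacle is the parabolic character of $\wt\Lambda_{z_0}$: with vanishing linearization one cannot invoke the classical stable-manifold theorem, the rates of approach are polynomial rather than exponential, and the formal series is typically only Gevrey-$1$, so $\gamma^u$ is $\CCC^\infty$ but generically not analytic at $q=0$. These features force the tailored weighted norms and the formal-plus-remainder scheme described above, in the spirit of McGehee's original construction \cite{McGehee73} and the parameterization framework of \cite{CabreFL03a,BaldomaFdlLM07} cited by the authors.
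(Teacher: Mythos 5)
Your plan differs genuinely from the paper's proof and, as written, has a quantitative gap. The paper does not solve the graph equations directly: it reverses time and introduces the auxiliary variable $\tilde z=(z-z_0)/(q+p)$, which transforms the \emph{center} direction $z$ of \eqref{eq:general_system_in_McGehee_coordinates} (whose $z$-linearization is identically zero, $\dot z=(q+p)^6\OO_0$) into a weakly expanding one, $\dot{\tilde z}=\tfrac14(q+p)^2(q-p)\tilde z+(q+p)^5\OO_0$. Only after this blow-up does the system satisfy the spectral hypothesis of the abstract Theorem~\ref{thm:invariantmanifoldsflows} (that $\partial^N X^2/\partial x^{N-1}\partial y$ have spectrum in $\{\Re z>0\}$ with $N=4$), and the proposition is then an immediate corollary. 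Without the rescaling this abstract theorem does not apply: the $z$-block of that matrix is zero, and your proposal neither notices this obstruction nor substitutes an argument that handles a genuine center direction.

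Even taken on its own terms, your graph-transform scheme does not close with the stated weight. Inverting the transport operator $\partial_t+F_q\partial_q$ along the backward characteristics $q(s)\sim c|s|^{-1/3}$ loses three powers of $q$: if $g=\OO(q^m)$ then $\eta(q)=\int_0^q g(q')\,\tfrac{4}{q'^4}dq'=\OO(q^{m-3})$. This is exactly the loss the paper quantifies in Lemma~\ref{lem:Gbounded}, where $\GG:\XX_{k+N}\to\XX_{k+1}$ with $N=4$. So if you truncate at order $N$ and the residual is $\OO(q^{N+1})$, the fixed point lands in $\{\eta=\OO(q^{N-2})\}$, not in the space normed by $\|\eta\|_N=\sup q^{-N-1}|\eta|$ that you propose; the map is not a self-map of the stated ball. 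You must either shift the weight by three or pre-solve the formal problem to order $k$ large enough that the error lies in $\XX_{k+N}$ while the fixed point is sought in $\XX_{k+1}$, as the paper does. Relatedly, the ``standard bookkeeping'' for the formal series is subtler than you suggest in the center direction: the coefficient $b_m$ of $q^m$ in $\gamma_z$ is only pinned down by the mean-value solvability condition arising three orders later, so that $b_1=b_2=0$ and the nonzero $b_3$ come from orders $m=4,5,6$, and tracking this is needed to justify the leading order $\gamma_z=z_0+\OO(q^3)$. Finally, the analyticity in $(z_0,t)$ and for $q>0$ is obtained in the paper by running the fixed point on complex sectorial domains $V=\{|\arg t|\le a,\ 0<|t|\le\rho\}$; merely invoking ``complexification'' leaves this unestablished.
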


\begin{proof} It is an immediate consequence of
Theorem~\ref{thm:invariantmanifoldsflows} in the Appendix.
Indeed, after changing the sign of time and
introducing the new variables
\[
\tilde z = \frac{1}{q+p}(z-z_0),
\]
%\[
%\tilde \theta = \frac{\theta-\theta_0}{q+p}, \quad \tilde G =
%\frac{G-G_0}{q+p}
%\]
system~\eqref{eq:system_in_McGehee_coordinates} becomes
%\begin{equation}
%\label{eq:system_in_McGehee_coordinates_2}
% \begin{split}
% \dot q&= -\frac{1}{4} (q+p)^3(q+(q+p)^3\OO_0)\\
% \dot p&= \frac{1}{4} (q+p)^3(p+(q+p)^3\OO_0)\\
% \dot{\tilde \theta}&=\frac{1}{4}(q+p)^2(q-p) \tilde \theta + (q+p)^5\OO_0\\
%\dot{\tilde  G}&=\frac{1}{4}(q+p)^2(q-p) \tilde G + (q+p)^5\OO_0
% \end{split}
%\end{equation}

\begin{equation}
\label{eq:system_in_McGehee_coordinates_2}
 \begin{split}
 \dot q&= -\frac{1}{4} (q+p)^3(q+(q+p)^3\OO_0)\\
 \dot p&= \frac{1}{4} (q+p)^3(p+(q+p)^3\OO_0)\\
 \dot{\tilde z}&=\frac{1}{4}(q+p)^2(q-p) \tilde z + (q+p)^5\OO_0\\
 \dot t &=1
 \end{split}
\end{equation}
and depends analytically on $z_0$ and $t$. It satisfies the hypotheses of
Theorem~\ref{thm:invariantmanifoldsflows} with $N=4$ and $c = 1/4$.
Then, the origin of~\eqref{eq:system_in_McGehee_coordinates_2} has
an invariant stable manifold parameterized by
\[
\tilde \gamma (u,z_0,t) = (u,0,0,0) + \OO(u^2).
\]
From this parametrization, we have that the invariant manifold is
the graph of a function over~$q$ satisfying $(p,\tilde z) =
\OO(q^2)$. The claim follows simply restoring the original
variables.

The proof for the stable manifold is analogous.
\end{proof}

Once we have the existence and regularity of the invariant
manifolds, their straightening can be easily accomplished.

\begin{proposition}
\label{prop:straighteningofthemanifolds} There exists a $C^{\infty}$
change of variables which transforms
system~\eqref{eq:general_system_in_McGehee_coordinates} into
%\begin{equation}\label{def:DefModel}
%\begin{split}
%\dot q&=f(q,p)^3q(1+\OO_1)\\
%\dot p&=-f(q,p)^3p(1+\OO_1)\\
%\dot \theta &= f(q,p)^2 q p \OO_1\\
%\dot G &= f(q,p)^2 q p \OO_1\\
%\dot t &=1
%\end{split}
%\end{equation}
\begin{equation}\label{def:DefModel}
\begin{split}
\dot q&=f(q,p)^3q(1+\OO_1)\\
\dot p&=-f(q,p)^3p(1+\OO_1)\\
\dot z &= f(q,p)^2 q p \OO_1 \\
\dot t &=1
\end{split}
\end{equation}
defined for $q,p \ge 0$, where $f(q,p) = q+p+\OO_2$ is a
$\CCC^{\infty}$ function in $\{q,p \ge 0\}$ with bounded derivatives.
\end{proposition}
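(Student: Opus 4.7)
The plan is to construct a single near-identity change of coordinates that straightens both $W^u_{z_0}$ and $W^s_{z_0}$ for every $z_0$ simultaneously, and then to read off the target form of the ODE from the resulting invariance of the coordinate planes. Using the graph parameterizations $\gamma^u(q,z_0,t)$ of $W^u_{z_0}$ and $\gamma^s(p,z_0,t)$ of $W^s_{z_0}$ supplied by Proposition~\ref{prop:local_invariant_manifolds}, I would define $\Phi:(Q,P,Z,t)\mapsto(q,p,z,t)$ by
\[
q = Q + \pi_q\gamma^s(P,Z,t),\qquad p = P + \pi_p\gamma^u(Q,Z,t),\qquad z = \pi_z\gamma^u(Q,Z,t) + \pi_z\gamma^s(P,Z,t) - Z.
\]
The estimates $\pi_q\gamma^s=\OO(P^2)$, $\pi_p\gamma^u=\OO(Q^2)$, $\pi_z\gamma^u-Z=\OO(Q^3)$, $\pi_z\gamma^s-Z=\OO(P^3)$ imply that the Jacobian of $\Phi$ at $(Q,P)=(0,0)$ is the identity, so by the Implicit Function Theorem $\Phi$ is a local $\CCC^\infty$ diffeomorphism on a neighborhood of $\{Q=P=0\}$. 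Restricting $\Phi$ to $P=0$ (respectively $Q=0$) recovers the unstable (respectively stable) parameterization, so in the new coordinates one has $W^u_{z_0}=\{P=0,\,Z=z_0\}$ and $W^s_{z_0}=\{Q=0,\,Z=z_0\}$.

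With both manifold families straightened, the structure of the pulled-back vector field is forced by their invariance: $\dot P\equiv 0$ and $\dot Z\equiv 0$ on $\{P=0\}$ (invariance of every $W^u_{z_0}$), and $\dot Q\equiv 0$ and $\dot Z\equiv 0$ on $\{Q=0\}$ (invariance of every $W^s_{z_0}$). Hadamard's lemma (smooth division by a coordinate function) then yields
\[
\dot Q = Q\,h_Q(Q,P,Z,t),\qquad \dot P = -P\,h_P(Q,P,Z,t),\qquad \dot Z = QP\,h_Z(Q,P,Z,t)
\]
with $h_Q,h_P,h_Z$ of class $\CCC^\infty$.

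To extract the required leading orders, I would substitute $\Phi$ into \eqref{eq:general_system_in_McGehee_coordinates} and solve for $(\dot Q,\dot P,\dot Z)$ using the inverse Jacobian of $\Phi$. Since $\Phi$ is near-identity one has $q+p=Q+P+\OO_2$; naming this smooth function $f(Q,P)$ (with the $(Z,t)$-dependence suppressed) one identifies the first of the stated target equations, as the dominant contribution $\tfrac14(q+p)^3 q$ to $\dot q$ divided by $Q$ gives $h_Q=f^3(1+\OO_1)$. The computation for $\dot P$ is identical up to sign, and for $\dot Z$ the bound $\dot z=\OO((q+p)^6)$ combined with the already established factorization by $QP$ produces the remaining $f^2 QP\,\OO_1$.

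The main obstacle will be the last step: controlling how the higher-order corrections contributed by the off-diagonal entries of the Jacobian of $\Phi$ (which involve derivatives of $\gamma^{u}$ and $\gamma^{s}$) combine with the $\OO_0$-size remainder $(q+p)^3\OO_0$ in the original equations, and verifying that the result still has the clean form $f^3(1+\OO_1)$ and $f^2 QP\,\OO_1$. The orders of vanishing $\pi_p\gamma^u=\OO(Q^2)$, $\pi_q\gamma^s=\OO(P^2)$, $\pi_z\gamma^u-Z=\OO(Q^3)$, $\pi_z\gamma^s-Z=\OO(P^3)$ from Proposition~\ref{prop:local_invariant_manifolds} are exactly what make these cancellations work. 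The function $f$ inherits only $\CCC^\infty$ regularity (not analyticity) at $\{Q=P=0\}$, in accordance with the limited smoothness of the parabolic invariant manifolds at $\wt\Lambda$.
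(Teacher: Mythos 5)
Your construction follows essentially the same idea as the paper's: use the graph parameterizations $\gamma^u,\gamma^s$ from Proposition~\ref{prop:local_invariant_manifolds} to build a near-identity $\CCC^\infty$ change of variables that straightens the stable and unstable leaves, then read off the factored form of the vector field from the invariance of the coordinate planes. The only difference is cosmetic — the paper composes two consecutive changes (one per foliation, the second fixing the already-straightened unstable manifold) whereas you straighten both in a single step — and both treatments leave the final order-bookkeeping for $f^3(1+\OO_1)$ and $f^2 qp\,\OO_1$ at the same level of detail.
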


\begin{proof}
The change is the composition of two consecutive changes of
variables. The first one is defined as follows. By
Proposition~\ref{prop:local_invariant_manifolds}, the map
\[
\Gamma:(q,z_0,t) \mapsto (q,\pi_{z} \gamma^u(q,z_0,t),t)
\]
is a $\CCC^{\infty}$ diffeomorphism from $\{(q,z_0,t)\mid 0<q<\delta,t\in\TT\}$
onto its image, for some $\delta>0$. By item 2 of
Proposition~\ref{prop:local_invariant_manifolds}, there exists
$\delta'>0$ such that  $\{(q,z,t)\mid 0<q<\delta'\} \subset
\Gamma(\{(q,z_0,t)\mid 0<q<\delta\})$. Hence,
\[
(q,z_0(q,z,t),t) = \Gamma^{-1}(q,z,t) = (q, z,t) + \OO(q^3)
\]
is a diffeomorphism.

The first change that we perform is given by
%\[
%\tilde q = q, \quad \tilde p = p - \pi_p
%\gamma^u(q,\theta_0(q,\theta,G),G_0(q,\theta,G)), \quad \tilde
%\theta = \theta_0(q,\theta,G), \quad \tilde G = G_0(q,\theta,G).
%\]
\[
\tilde q = q, \quad \tilde p = p - \pi_p \gamma^u(q,z_0(q,z,t)), \quad
\tilde z = z_0(q,z,t).
\]
It is clearly a change of variables if $q$ is small, and straightens
the unstable leaves. Analogously, one straightens the stable leaves.
The second change is the identity on the unstable manifold.
\end{proof}

\subsection{The Lambda lemma}

To state the Lambda Lemma \ref{lem:LambdaLemma} it is more convenient to work
with  the Poincar\'e map
\begin{equation}\label{def:PoincareMap}
\PP: \{t=t_0\} \longrightarrow \{t=t_0+2\pi\}
\end{equation}
 associated to the flow of the system~\eqref{def:DefModel}.

Lemma \ref{lem:LambdaLemma} follows from the
next statement.

\begin{proposition}\label{prop:Transition} Consider a point
$Q=(q_f,0,z_0)\in W^u(0,0,z_0)$ and  a $\CCC^1$ curve $C$
parameterized by $\al(\de)=(\delta, \al_p(\de), \al_z(\delta))$, $\de\in
[0,\de_0]$ for some $\delta_0>0$, which is transversal to $\{q=0\}$
in a point $P=\al(0)=(0, p_0, z_0)$. For any $\rr\ll
1$, there exists a sequence $\{\de_k\}_{k\geq 1}\searrow 0$ as
$k\rightarrow \infty$ with $0<\de_k<\de_0$, and an increasing
sequence of $\{n_k \}_{k\geq 1}\subset \NN$, $n_k\rightarrow+\infty$
as $k\rightarrow+\infty$, such that
\[
 \PP^{n_k}(\al(\de_k))\in B_\rr(Q).
\]
\end{proposition}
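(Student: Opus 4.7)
The plan is to work entirely with the model flow \eqref{def:DefModel} given by Proposition~\ref{prop:straighteningofthemanifolds}, in which the local unstable manifold of $(0,0,z_0)$ is the straight segment $W^u_{\loc}=\{p=0,\,z=z_0\}$, the local stable manifold is $W^s_{\loc}=\{q=0,\,z=z_0\}$, and in which the leading-order equations are $\dot q = f^3 q$, $\dot p = -f^3 p$, $\dot z=0$, with $f(q,p)=q+p+\OO_2$. The curve $C$ is transverse to the local stable manifold at $P$, so a point $\alpha(\de)$ with small $\de>0$ starts with $q=\de>0$, $p\approx p_0>0$, $z\approx z_0$. The idea is to show that the forward orbit of $\alpha(\de)$ under the flow $\varphi^t$ of \eqref{def:DefModel} eventually hits the section $\{q=q_f\}$ at some time $T(\de)$; that $T(\de)\to\infty$ continuously as $\de\to 0^+$; and that $\varphi^{T(\de)}(\alpha(\de))\to Q=(q_f,0,z_0)$. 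An intermediate-value argument then furnishes $\de_k\to 0$ with $T(\de_k)=2\pi n_k\in 2\pi\NN$, for which $\PP^{n_k}(\alpha(\de_k))\in B_\rho(Q)$.

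The quantitative analysis will be carried out with the slow time $d\tau = f(q,p)^3\,dt$. In this time the leading system is $q'=q(1+\OO_1)$, $p'=-p(1+\OO_1)$, $z'=pq\,\OO_1/f$, so $qp$ is almost a first integral and the trajectory essentially traces an orbit $q(\tau)\approx \de e^{\tau}$, $p(\tau)\approx p_0 e^{-\tau}$. The section $\{q=q_f\}$ is reached at $\tau_f\approx\log(q_f/\de)$, at which point $p\approx p_0\de/q_f\to 0$. Reverting to the original time,
\[
T(\de)=\int_0^{\tau_f}\frac{d\tau}{f(q(\tau),p(\tau))^3}
\]
is dominated by the transition region $\tau\sim\tfrac12\log(p_0/\de)$, giving $T(\de)\asymp(\de p_0)^{-3/2}\to\infty$. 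For the $z$-coordinate, $|\dot z|\le C f^3 pq$ so
\[
|z(t)-z(0)|\le C\int_0^{\tau_f}pq\,d\tau\lesssim \de p_0\,\tau_f\lesssim \de\log(1/\de)\longrightarrow 0.
\]
Together with $p\to 0$ and $q\to q_f$, this yields $\varphi^{T(\de)}(\alpha(\de))\to Q$ as $\de\to 0^+$.

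To convert flow times into Poincar\'e iterations, note that $\de\mapsto T(\de)$ is continuous on $(0,\de_0]$ by smooth dependence on initial conditions and by $\dot q>0$ at $\{q=q_f\}$ (implicit function theorem), and $T(\de)\to\infty$ monotonically up to lower-order corrections. Hence for every sufficiently large integer $n$, the equation $T(\de)=2\pi n$ admits a solution $\de_n\in(0,\de_0]$, and by monotonicity $\de_n\to 0$ as $n\to\infty$. For these values, $\PP^n(\alpha(\de_n))=\varphi^{2\pi n}(\alpha(\de_n))=\varphi^{T(\de_n)}(\alpha(\de_n))$, which lies in $B_\rho(Q)$ once $n$ (equivalently, $\de_n$) is chosen small enough. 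Extracting an increasing subsequence $n_k$ gives the desired sequences.

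The main technical obstacle is controlling the effect of the $\OO_1$ corrections in \eqref{def:DefModel} during the long interval $[0,T(\de)]$: the trajectory spends time $\asymp\de^{-3/2}$ near the parabolic fixed point, and one must show that the near-conservation of $qp$ and the smallness of $z$-drift survive these corrections uniformly in $\de$. This will be handled by Gronwall-type estimates in the slow time $\tau$, exploiting that the $\OO_1$ terms carry an extra factor $q+p$, which keeps the relative error over $\tau\in[0,\tau_f]$ at most of order $\sqrt{\de p_0}\cdot\tau_f=o(1)$, so that the leading-order picture governs the orbit uniformly. Once this uniform control is in place, the continuity of $T(\de)$ and the limit $\varphi^{T(\de)}(\alpha(\de))\to Q$ follow by compactness and standard continuation of the flow up to the section $\{q=q_f\}$, completing the proof of Proposition~\ref{prop:Transition} and, via the remark that $\PP^n$ acts on such transverse curves, of Lemma~\ref{lem:LambdaLemma}.
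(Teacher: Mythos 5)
Your proposal is correct and follows essentially the same route as the paper's proof: straighten the invariant foliations, pass to the slow time $d\tau=f(q,p)^3\,dt$, run Gronwall estimates to track $(q,p,z)$ until the section $\{q=q_f\}$ is reached, show $p\to 0$ and $z\to z_0$ as $\de\to 0^+$, and then use continuity and divergence of the original transition time $T(\de)$ to hit multiples of $2\pi$. Two small remarks: the $z$-drift bound $\de\log(1/\de)$ presumes exact conservation of $qp$, and with the $\OO_1$ corrections Gronwall gives the slightly weaker $\de^{1-2\tilde\eps}$ (still vanishing), which is what the paper actually uses; and monotonicity of $T(\de)$ is not needed — continuity together with boundedness of $T$ on compact subintervals $[\de',\de_0]$ already forces $\de_n\to 0$.
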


\begin{proof}
We first study the  associated
equation \eqref{def:DefModel} and later we show how to deduce the statement for
the
 Poincar\'e map.

As $\{q=0\}$ and $\{p=0\}$ are invariant manifolds for system
\eqref{def:DefModel}, the region $\{q>0,p>0\}$ is invariant. In this region,
by Proposition \ref{prop:straighteningofthemanifolds}, $f$  is strictly positive
for $(q,p)$ small enough and satisfies $|f(q,p)|\ge K \|(q,p)\|$, for some $K>0$
depending only on the
domain.

We  rescale time by setting $\frac{ds}{dt}=f(q,p)^3$.  In the new
time $s$, in the region  $\{q>0,p>0\}$, the system becomes
\begin{equation}\label{ModResc}
\begin{split}
\dot q&=q(1+\mathcal{O}_1)\\
\dot p&=-p(1+\mathcal{O}_1)\\
\dot z &= q p \OO_0\\
%\dot G &= q p\OO(1).\\
\dot t &=(f(q,p))^{-3}.
\end{split}
\end{equation}
Call $\Psi^s$ the flow associated with this equation.

Fix $\eps_0>0$ and $\zeta_0 >4$ such that the equation
\eqref{ModResc} is well defined for
%\[
% (q,p,G,\theta,t)\in D= [0,\eps_0]^2\times
%[G_1,G_2]\times\TT^2\subset\RR^2\times [G_1,G_2]\times\TT^2.
%\]
\[
 (q,p,z,t)\in D= (0,\eps_0]^2\times
B_{\zeta_0}(z_0)\times\TT\subset\RR^2\times \RR^n\times\TT.
\]
There  exists $K>0$ such that  the terms $\OO_0$ and $\OO_1$ appearing in
\eqref{ModResc} satisfy $|\OO_0|\leq K$ and $|\OO_1|\leq
K\|(q,p)\|\ll 1$. Choose $\eps\in (0,\eps_0)$ such that
$\wt\eps=K\eps\in (0,1/10)$. Then, for any point in
$(0,\eps]^2\times [G_1,G_2]\times\TT^2$,
\begin{equation}\label{eq:RescaledEqEstimates}
\begin{array}{cccc}
(1-\tilde\varepsilon)q&\leq \dot q\leq &(1+\tilde\varepsilon)q & \\
-(1+\tilde\varepsilon)p&\leq \dot p\leq &-(1-\tilde\varepsilon)p& \\
%-\wt\eps q p &\leq \dot \theta \leq&  \wt\eps q p\\
- K q p &\leq \dot z_i \leq& K q p,&  \quad i =1,\dots,n
\end{array}
\end{equation}
where $z= (z_1,\dots,z_n)$.

The points $P$ and $Q$ introduced in the statement of the
proposition can be chosen  such that $0<q_f,p_0<\eps/10$. Fix
$\wt\delta_0>0$ with $\wt \delta_0<\min\{\delta_0, \varepsilon/10\}$.
Fix~$\rho>0$ and define $\mathcal{U}=B_\rho(Q)\subset\RR^4$. For any
$0<\de<\wt\delta_0$, there exists $T^*=T^*(\varepsilon,\delta)>0$
such for any time in $[0,T^*]$, the orbit  of the point~$\al(\de)$
under the flow~$\Psi^s$ of system~\eqref{ModResc}  does not leave
the domain~$D$.

Applying Gronwall estimates to the first two equations of
\eqref{eq:RescaledEqEstimates} with initial condition $(q(0),p(0),
z(0))=\al(\de)$
and $s\in [0,T^*]$, one obtains the following inequalities
\begin{equation}\label{q(s)p(s)}
\begin{array}{ccc}
q(0)e^{(1-\tilde\varepsilon)s}&\leq  q(s) \leq &q(0)e^{(1+\tilde\varepsilon)s}\\
p(0)e^{-(1+\tilde\varepsilon)s}&\leq  p(s) \leq
&p(0)e^{-(1-\tilde\varepsilon)s}.
\end{array}
\end{equation}
Moreover,
%\begin{equation}\label{eq:GandTheta}
%\begin{split}
%|\theta(s)-\theta(0)|\leq&\left|\int_0^s q p \OO(1)\right|\leq
%\frac{K}{2}e^{2\wt\eps s} q(0)p(0)\\
%|G(s)-G(0)|\leq&\left|\int_0^s q p \OO(1)\right|\leq
%\frac{K}{2}e^{2\wt\eps s} q(0)p(0).
%\end{split}
%\end{equation}
\begin{equation}\label{eq:GandTheta}
\|z(s)-z(0)\|\leq \left\|\int_0^s q p \OO_0\right\|\leq
5Ke^{s/5} q(0)p(0).
\end{equation}
It is clear that the variable that leaves first the domain $D$ is
$q$ and therefore $T^*$ satisfies
\[
 \frac{1}{1+\wt\eps}\ln\frac{\eps}{\de}\leq
T^*\leq\frac{1}{1-\wt\eps}\ln\frac{\eps}{\de}.
\]
%On the other hand, since the curve $\Gamma$ is transverse to the
%$p$-direction, and $\delta$ can be taken small enough, we can assume
%that $\Gamma$ is a graph over $q\in[0,\delta]$.
Since $q_f\leq \eps/10$, by continuity, for $\de\leq \wt\de_0$ there
exists $S=S(\de)\leq T^*$ such that
$\Psi^S(\al(\de),0)\in\{q=q_f\}$. Moreover, by the
Gronwall
estimates in \eqref{q(s)p(s)},
\begin{equation}\label{def:RecaledTimeTransition}
  \frac{1}{1+\wt\eps}\ln\frac{q_f}{\de}\leq
S\leq\frac{1}{1-\wt\eps}\ln\frac{q_f}{\de}.
\end{equation}
%Take the point $a=\ga(\de/2)$.  We denote by $S$ the time
%needed for $a$ to get to $q=q_0$. We will use a Bootstrap-like
%argument to prove that under $S$ the orbit of $a$ reaches
%$\mathcal{U}$.
%We first get some a priori bounds. Note that replacing $q(S)=q_0$
%in the first inequality in \eqref{q(s)p(s)}),  gives us a bound
%for $S$  as follows
%\[
%\begin{array}{ccc}
%\frac1{(1+\tilde\varepsilon)}\log  \frac{2q_0}\delta&\leq  S \leq
%\frac1{(1-\tilde\varepsilon)}\log \frac{2q_0}\delta.
%\end{array}
%\]
For such $S$, using the second inequality in \eqref{q(s)p(s)} we get
that
\[
p(S)\leq p(0)e^{\frac{1-
\wt\eps}{1+\wt\eps}\log\frac{\delta}{q_f}}\leq
p(0)\left(\frac{\delta}{q_f}\right)^{\frac{1-
\tilde\varepsilon}{1+\tilde\varepsilon}}.
\]
Note that, since $\wt\eps\ll1$,  the right hand side converges to zero with
$\delta$.
%Now we estimate $\theta(S)$. One can estiamte $G(S)$ analogously.
%\[
%\theta(S)=\theta(0)+\int_0^Sxy\OO(1)\leq{\theta}_0+\delta+\int_0^Sxy\OO(1).
%\]
Now we use \eqref{eq:GandTheta}, to bound $\|z(S)-z(0)\|$.  We have
that
\[
 \begin{split}
  \|z(S)-z_0\|&\leq \|z(S)-z(0)\| +\|z(0)-z_0\|\\
 &\leq 5Ke^{ S/5} q(0)p(0)+\|\al\|_{\CCC^1}\de\\
&\leq 5K
p(0)q_f^{\frac{1}{5(1-\tilde\varepsilon)}}\delta^{1-\frac{1}{5(
1-\tilde\varepsilon)}}+\|\al\|_{\CCC^1}
\de
 \end{split}
\]
where we have used that $q(0)=\de$.

We want the final point in $\{q=q_f\}$ to belong to $B_\rr(Q)$. We
choose $\de$ such that this is true. We need
\[
 |p(S)-p_0|\leq\frac{\rr}{4},\quad \text{and} \quad \|z(S)-z_0\|\leq\frac{\rr}{4}.
\]
Then, we need to choose $\de$ such that
\[
 \|\ga\|_{\CCC^0}\left(\frac{\delta}{q_f}\right)^{\frac{1-
\tilde\varepsilon}{1+\tilde\varepsilon}}\leq\frac{\rr}{4}\,\, \text{
and }\,\,
5K
\|\ga\|_{\CCC^0}q_f^{\frac{1}{5(1-\tilde\varepsilon)}}\delta^{1-\frac{
1}{5(1-\tilde\varepsilon)}}+\|\ga\|_{\CCC^1}\de\leq
\frac{\rho}{4}.
\]

%It follows then that choosing $\delta$ small enough so that
%\begin{equation}\label{delta1} (p_0+\delta)e^{\frac{1-
%\tilde\varepsilon}{1+\tilde\varepsilon}\log \frac \delta{2q_0}}\leq
%\rho\end{equation}
%\begin{equation}\label{delta2} -\frac1{(1-\tilde\varepsilon)}
%q_0(p_0+\delta)\log \frac\delta{2q_0}
%.(\frac\delta{2q_0})^{1-\frac{2\tilde\varepsilon}{1-\tilde\varepsilon}}+\delta
%\leq \rho\end{equation} guarantees that $\phi^S(a)$ $\in$ $\mathcal{U}$.

All is left to prove now is that, in the original time variable
(before rescaling), that is for the flow associated to the equation
\eqref{def:DefModel}, the time $T$ needed for the transition from
$\ga(\de)$ to $B_\rr(Q)$ can be chosen as a multiple of $2\pi$. This
is achieved by taking $\de$ small enough and allows us to say that
what we obtained makes sense for the Poincar\'e map. The argument
goes as follows. Fix the starting section $p=p(0)$ and the final
final section $q=q_f$. The transition time $S$  in the rescaled time
depends continuously on $\de$ and, by
\eqref{def:RecaledTimeTransition}, satisfies that
$S(\delta)\rightarrow\infty$ as $\delta \rightarrow 0$. Since the
orbit never leaves the domain $D$, we have that $dt/ds\geq
1/\varepsilon^3$. Therefore, the transition time
$T(\delta)\rightarrow \infty$ as $\delta\rightarrow 0$. By
continuity, it must pass through a multiple of $2\pi$. In fact, it
must pass through a multiple of $2\pi$ infinitely many times
$T(\delta_k)$ with $\delta_k\rightarrow 0$ as stated in the
proposition.
\end{proof}

\section*{Acknowledgements:}
M.G., P. M. and T. S. are partially supported by the  Spanish
MINECO-FEDER Grant MTM2012-31714 and the Catalan Grant 2014SGR504. L. S. is
partially supported by the EPSRC grant EP/J003948/1.

\appendix

\section{Invariant manifolds of parabolic points and dependence with
respect to parameters} \label{sec:invman}

Here we present a version of Theorem~2.1 in~\cite{BaldomaFdlLM07} on
the invariant manifolds of parabolic fixed points where we also include
their dependence with respect to parameters. We need this statement
for Proposition~\ref{prop:local_invariant_manifolds}, which provides
the proper set of coordinates in which we derive de Lambda Lemma.
Here we  deal with the analytic case, which is the one relevant
in the present work, unlike the setting of~\cite{BaldomaFdlLM07},
where $\CCC^k$ maps were considered. The proof of the theorem we
present here follows the same lines of the one
in~\cite{BaldomaFdlLM07}, but is simpler and shorter. The main difference is
that, to deal with analyticity, we work with complex domains.

We first present the statement for maps and then we deduce the
analogous result for time periodic flows.

Given a local diffeomorphism $F$ from a neighborhood of $(0,0) \in
\RR\times \RR^n$ to $\RR\times \RR^n$ such that $F(0,0) = (0,0)$, an
open convex set $\widetilde V$ such that $(0,0) \in \partial
\widetilde V$ and $r>0$, we define the \emph{local stable set of
$(0,0)$ with respect to $\widetilde V$} as
%\begin{equation}
%\label{def:localstableset}
\[
W^s_{\widetilde V,r} = \{(x,y)\in
\widetilde V \cap B_r \mid F^k(x,y) \in \widetilde V \cap B_r, \;
\text{for all $k\ge 0$}, \; \lim_{k\to \infty} F^k(x,y) = (0,0)\},
\]
%\end{equation}
where $B_r$ denotes the ball of radius~$r$ in $\RR\times \RR^n$.

\begin{theorem} \label{thm:invariantmanifoldsmaps}
Let $U \subset \RR\times \RR^n$ be a neighborhood of the origin
$(x,y)=(0,0)$, where $x\in\RR$ and $y\in \RR^n$, $\AAA \subset
\RR^p$ an open set and $F=(F^1, F^2):U \times \AAA \rightarrow
\RR\times \RR^n $ a real analytic map such that its derivatives up
to order $N\ge 2$ at the origin are independent of the parameters~$\lambda\in
\AAA$, $F(0,0,\lambda) = 0$, $DF(0,0,\lambda) = \Id$
\[
%\begin{equation}
D^jF(0,0,\lambda) = 0, \qquad \mbox{ for } \quad 2\le j\le N-1
%\label{condicio1}
%\end{equation}
\]
and
%\begin{equation}
%\label{condicio3}
\[
\frac{\partial ^N F^1 }{\partial x^N}(0,0,\lambda)
=: -c <0, \qquad \frac{\partial ^N F^2}{\partial x^N}(0,0,\lambda) =
0
%\end{equation}
\]
and
%\begin{equation}
%\label{condicio4}
\[
\text{\rm spec}\, \frac{\partial^N F^2}{\partial
x^{N-1}
\partial y}(0,0,\lambda) \subset \{z\in \CC \mid \Re z > 0\}.
%\end{equation}
\]

Then there exists $t_0>0$ and a $\CCC^{\infty}$ map $K: [0,t_0)\times
\AAA \subset \RR \times \RR^p \rightarrow \RR\times \RR^n$ of the
form $K(t) = (t,0) + O(t^2)$, analytic in $(0,t_0)\times \AAA$,
and a polynomial $R(t) = t-c t^N + \tilde c (\lambda) t^{2N-1}$,
with $\tilde c$ analytic in $\AAA$, such that
\begin{equation} \label{mainformula}
F\circ K = K\circ R.
\end{equation}
Furthermore, there exists an open convex set $\widetilde V$, with
$(0,0) \in \partial \widetilde V$, containing the line $\{x>0, y=
0\}$, $r>0$ and $t_0$ such that the  range of $K$ is
$W^s_{\widetilde V,r}$.
\end{theorem}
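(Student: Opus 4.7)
The plan is to use the parameterization method, following the approach of \cite{BaldomaFdlLM07} but adapted to the analytic category with parameters. The goal is to solve the conjugacy equation
\begin{equation*}
F(K(t,\lambda),\lambda) = K(R(t,\lambda),\lambda)
\end{equation*}
for a map $K:[0,t_0)\times\AAA\to\RR\times\RR^n$ satisfying $K(t,\lambda)=(t,0)+\OO(t^2)$, with $R(t,\lambda)=t-ct^N+\tilde c(\lambda)t^{2N-1}$. Once this is solved, the image of $K$ will automatically be contained in the stable set, because the scalar map $R$ has $0$ as a one-sided attracting parabolic fixed point on $[0,t_0)$, so $R^k(t)\to 0$ and hence $F^k(K(t,\lambda))\to 0$.

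First I would construct a polynomial approximation $K_{\le M}(t,\lambda)=(t,0)+\sum_{2\le j\le M}K_j(\lambda)t^j$ by plugging the ansatz into the conjugacy equation and matching powers of $t$. Thanks to the normal form hypotheses ($DF=\Id$, vanishing of intermediate derivatives, non-zero $N$-th $x$-derivative, and the spectral hypothesis on $\partial^N_{x^{N-1}y}F^2$), the coefficients $K_j(\lambda)$ can be solved recursively for $j<2N-1$, and the obstruction that appears at the resonant order $j=2N-1$ in the first component is exactly what fixes $\tilde c(\lambda)$. Since $F$ is analytic in $\lambda$, each $K_j(\lambda)$ and $\tilde c(\lambda)$ is analytic on~$\AAA$. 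I choose $M$ large enough (depending on $N$, to be specified by the estimates below) so that the residual $E:=F\circ K_{\le M}-K_{\le M}\circ R$ is a function which vanishes at $t=0$ to some high order $t^{M+L}$.

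Next I would pose a fixed point problem for the remainder $K_{>M}=K-K_{\le M}$. Substituting $K=K_{\le M}+K_{>M}$ into the conjugacy equation and linearizing around $K_{\le M}$, I obtain an equation of the schematic form
\begin{equation*}
K_{>M}\circ R - DF(K_{\le M},\lambda)\,K_{>M}=E+\NNN(K_{>M}),
\end{equation*}
where $\NNN$ collects the superlinear terms. To work in the analytic category I would introduce a family of complex cusp-like domains $\DDD_\rho=\{t\in\CC:|t-\rho|<\rho,\ \Re t>0\}$ (or a suitable sector tangent to the positive real axis at the origin) together with the Banach space of bounded analytic maps $h:\DDD_\rho\times\AAA\to\RR\times\RR^n$ with norm $\|h\|_\nu=\sup|h(t,\lambda)|\,|t|^{-\nu}$. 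The crucial point is that $R$ maps $\DDD_\rho$ into itself and $|R(t)|\le |t|\bigl(1-c'|t|^{N-1}\bigr)$ on the real part of the domain, which translates into a contraction factor $1-c''|t|^{N-1}$ for the operator $h\mapsto h\circ R$ acting on functions vanishing at the origin. Combined with the fact that $DF(K_{\le M},\lambda)=\Id+\OO(t^{N-1})$, one shows that the linear operator on the left hand side is invertible on the space with weight $\nu$ in a suitable range, and its inverse gains a factor $|t|^{-(N-1)}$. Choosing $M$ so that the residual $E$ has weight $\nu+N-1$ or higher, the right hand side becomes small in the weighted norm, and a standard Banach fixed point argument yields a unique analytic solution $K_{>M}$ with $\|K_{>M}\|_\nu$ as small as desired.

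The analyticity of the solution in $\lambda\in\AAA$ follows because the fixed point operator depends analytically on $\lambda$. The $\CCC^\infty$ extension to $t=0$ is a consequence of having the formal asymptotic expansion $K_{\le M}$ for every $M$: uniqueness of the solution for the conjugacy equation on the one-sided real interval forces the different analytic solutions to agree and to admit the common Taylor expansion, hence $K\in\CCC^\infty([0,t_0)\times\AAA)$. Finally, for the characterization of the image as $W^s_{\widetilde V,r}$, one inclusion is immediate from $F\circ K=K\circ R$ and $R^k(t)\to 0$. The reverse inclusion is the standard step: given $(x,y)\in W^s_{\widetilde V,r}$, one uses the graph transform (or a direct argument using uniqueness of orbits tending to $0$ at the parabolic rate $t^k\sim 1/k^{1/(N-1)}$) to show that there exists a unique $t$ such that $K(t,\lambda)=(x,y)$. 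The main obstacle is the third step: the linearized operator has no spectral gap, and obtaining its invertibility together with the required gain of a factor $|t|^{N-1}$ relies delicately on the geometry of the complex domain $\DDD_\rho$ and on the precise matching between the order $N-1$ of parabolic contraction of $R$ and the weight exponents used in the norm.
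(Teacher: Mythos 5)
Your proposal is essentially the paper's proof: both rely on the parameterization method, constructing a polynomial approximation $K^{\le k}$ by matching powers of $t$ (with $\tilde c(\lambda)$ fixed at the resonant order $2N-1$), an $R$-invariant complex sector $V=\{|\arg t|\le a,\ 0<|t|\le\rho\}$ near the positive real axis (the paper uses a true sector, one of the two options you mention, rather than a disk tangent to the imaginary axis), weighted Banach spaces of functions analytic on $V\times\widehat{\AAA}$, a bounded right inverse for the linearized conjugacy operator that gains one parabolic weight $t^{N-1}$, a Banach fixed point for the remainder, and $\CCC^\infty$ regularity at $t=0$ by letting the degree $k$ be arbitrary. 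The only place where the paper is more explicit than your sketch is the identification of the range of $K$ with $W^s_{\widetilde V,r}$, which it settles by invoking a prior Lipschitz-graph characterization of the parabolic local stable set due to Baldom\'a--Fontich and matching graphs over the $x$-axis, rather than the graph-transform or uniqueness argument you suggest; both are viable.
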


\begin{proof}
Along the proof we  skip the dependence of the functions
on~$\lambda$.

First of all, we remark that the hypotheses of
Theorem~\ref{thm:invariantmanifoldsmaps} imply those of Theorem~3.1
in~\cite{BaldomaF04}. Hence, there exists an open convex set
$\widetilde V$, with $(0,0) \in \partial \widetilde V$, containing
the line $\{x>0, y= 0\}$ and $r>0$ such that the local stable set
$W^s_{\widetilde V,r}$ is the graph of a Lipschitz function $\psi:
[0,s_0) \to \RR^n$. The fact that the set $\widetilde V$ can be
chosen independently of~$\lambda$ follows from the estimates of
Theorem~3.1 in~\cite{BaldomaF04}, where it is proven that it
contains a cone whose size only depends on the derivatives of $F$ up
to order~$N$. Then, if $K$ is the solution of~\eqref{mainformula}
given by the theorem, since $K(t,0) = (t,0) + O(t^2)$, its range is
the graph of a Lipschitz function, which must coincide with
$W^s_{\widetilde V,r}$. This proves the last statement of the
theorem, assuming that the previous ones are true.

Now we prove the existence and properties of such a~$K$.

Under the current hypotheses, we can apply Lemma~3.1
of~\cite{BaldomaFdlLM07} and we obtain that for any $k\ge N$ there
exist polynomials $K^{\le k} : \RR \to \RR^{1+n}$, of degree~$k$,
and  $R(t) = t-c t^N + \tilde c (\lambda) t^{2N-1}$ such that
\begin{equation}\label{eq:formal}
F\circ K^{\le k}(t) - K^{\le k}\circ R (t) = \OO(t^{k+N}).
\end{equation}
It is straightforward from the proof of Lemma~3.1 that the
coefficients of $K^{\le k}$ and $R$ are analytic functions of the
coefficients of the Taylor expansion of $F$ at $(0,0)$ up to
order~$k$. Hence, they depend analytically on~$\lambda$.

We claim
\begin{lemma}
\label{lem:Vinvariant} Let $\alpha = 1/(N-1)$. Let $\widehat
\AAA$ be a neighborhood of $\AAA$ in $\CC^p$ in which the
map~$F$ is analytic. There exists $a_0>0$ such that for any $0 < a <
a_0$, there exists $\rho'
>0$ such that the set $V = \{t \in \CC\mid |\arg t | \le a, \; 0<|t|
\le \rho\}$ satisfies $R(V) \subset V$, for any $0<\rho<\rho'$ and
$\lambda \in \widehat \AAA$.
\end{lemma}
\begin{proof}[Proof of Lemma~\ref{lem:Vinvariant}]
We remark that, for $t\in V$,
\[
\begin{aligned}
\arg (1-ct^{N-1} + \tilde c t^{2N-2}) & = - i \log \frac{(1-c t^{N-1}
+ \tilde c t^{2N-2})}{|(1-c t^{N-1} + \tilde c t^{2N-2})|} \\
 & = -c |t|^{N-1} \sin((N-1) \arg t) + \OO(|t|^{2N-2}) \\
 & = -(N-1) c|t|^{N-1} \arg t + |t|^{N-1} \OO(a^2) +
 \OO(|t|^{2N-2}).
\end{aligned}
\]
If $\rho$ is such that $1 -(N-1)
c\rho^{N-1}>0$, for some   $M>0$  we have
\[
\begin{aligned}
|\arg R(t)|& = |\arg (t(1-c t^{N-1} + \tilde c t^{2N-2}))| \\
& =  |\arg t + \arg (1-c t^{N-1} + \tilde c t^{2N-2})| \\
& = |(1 -(N-1) c|t|^{N-1}) \arg t + |t|^{N-1} \OO(a^2) +
 \OO(|t|^{2N-2})| \\
 & \le (1 -(N-1) c|t|^{N-1}) |\arg t| + M |t|^{N-1} a^2 +
 M|t|^{2N-2}.
\end{aligned}
\]
From the last inequality we obtain that, if $\rr$ satisfies,
\[
\rho^{N-1}<(N-1)c a\left(1-\frac{Ma}{(N-1)c}\right),
\]
then $|\arg R(t) | < a$, which proves the claim.
\end{proof}

We choose the constants $a$ and $\rho$ accordingly to
Lemma~\ref{lem:Vinvariant}. Taking $a$ and $\rho$ smaller if
necessary, it is clear that there exists $0< b < c < d$ such that
for all $t\in V$ and any $\lambda \in\widehat \AAA$,
\begin{equation}
\label{bound:contraction} R_d(|t|):= |t|-d|t|^N \le |R(t)| \le |t| -
b|t|^N =:R_b(|t|).
\end{equation}
Notice that $b$ and $d$ can be chosen arbitrarily close to $c$
taking $a$ and $\rho$ small enough.

Next lemma describes the contraction provided by the nonlinear
terms.

\begin{lemma}
\label{lem:contraction} Let $b_0, d_0$ and $s>0$ such that
$b_0^{N-1} = \alpha b^{-1}$, $d_0^{N-1} = \alpha d^{-1}$ and $b_0
s^{-\alpha} = \rho$. Then there exist two sequences, $(b_i)_{i\ge
0}$ and $(d_i)_{i\ge 0}$ such that for any $0<\beta< 1$,
\begin{equation}
\label{eq:limtsbkqk} b_i = b_0(1+\OO(i^{-\beta})),\quad d_i =
d_0(1+\OO(i^{-\beta})), \quad i \ge 0
\end{equation}
for $i\ge 0$,
\begin{equation}
\label{eq:bkdk} \frac{d_{i+1}}{(s+i+1)^{\alpha}} <
\frac{b_{i}}{(s+i)^{\alpha}},
\end{equation}
and
\begin{equation}
\label{eq:Rbkdk} R_b\left(\frac{b_{i}}{(s+i)^{\alpha}} \right) =
\frac{b_{i+1}}{(s+i+1)^{\alpha}},\quad
R_d\left(\frac{d_{i}}{(s+i)^{\alpha}} \right) =
\frac{d_{i+1}}{(s+i+1)^{\alpha}}.
\end{equation}
 Furthermore, the sets
\begin{equation}
\label{def:Vk} V_i = \left\{t\in V\mid
\frac{d_{i+1}}{(s+i+1)^{\alpha}} \le |t |  \le
\frac{b_{i}}{(s+i)^{\alpha}} \right\}
\end{equation}
satisfy  $V = \cup_{i\ge 0} V_i$ and $R(V_i) \subset V_{i+1}$.
Consequently, if $t\in V_i$, for any $j\ge 0$
\begin{equation}
\label{eq:contraction}
\frac{d_{0}(1+\OO(i^{-\beta}))}{(s+i+j+1)^{\alpha}} \le |R^j(t) |
\le \frac{b_{0}(1+\OO(i^{-\beta}))}{(s+i+j)^{\alpha}}.
\end{equation}
\end{lemma}
\begin{proof}[Proof of Lemma~\ref{lem:contraction}]
The two relations in~\eqref{eq:Rbkdk} define the numbers $b_k$ and
$d_k$ for $k\ge 1$. To prove~\eqref{eq:bkdk} we proceed by
induction. For the first step, since $b < d$, we have that $d_0 <
b_0$. Then, using~\eqref{bound:contraction}, the fact that $R_d(r),
R_b(r) < r$, for $0< r < r_0$ and are strictly increasing,
\[
\frac{d_1}{(s+1)^{\alpha}} = R_d\left( \frac{d_0}{s^{\alpha}}\right)
\le R_b\left( \frac{d_0}{s^{\alpha}}\right) < R_b\left(
\frac{b_0}{s^{\alpha}}\right) < \frac{b_0}{s^{\alpha}}.
\]
Since
\[
R_b\left(\frac{b_0}{s^{\alpha}}\right)= \frac{b_1}{(s+1)^{\alpha}},
\]
we have that $d_1 < b_1$ and we can perform the induction procedure.

Relations~\eqref{eq:limtsbkqk} follow from~Lemma~4.4 in~\cite{BaldomaFdlLM07}.

From~\eqref{eq:limtsbkqk} and~\eqref{eq:bkdk} follows that $V =
\cup_{k\ge 0} V_k$. Inclusion  $R(V_k) \subset V_{k+1}$ follows
from~\eqref{eq:Rbkdk} and inequality~\eqref{bound:contraction}.
\end{proof}

We choose $k>2N-1$.  Let
$K^{\le}= K^{\le k} : \RR \to \RR^{1+n}$, polynomial of degree~$k$ given in
\eqref{eq:formal}, and $R(t) =
t-c t^N + \tilde c (\lambda) t^{2N-1}$ be such that
\begin{equation}
\label{eq:errork} E(t):= F\circ K^{\le }(t) - K^{\le }\circ R (t) =
\OO(t^{k+N}).
\end{equation}
Since from now on $k$ is fixed, we skip the dependence on~$k$ of
$K^{\leq}$ and $E$. It is worth to remark that if one chooses $k'>k$
and finds the corresponding $K^{\leq}$ of degree~$k'$, its terms up
to degree~$k$ coincide with the former (since we keep~$R$ fixed in
the construction).

We want to find a solution of
%\begin{equation}
%\label{eq:invariance1}
\[
F\circ (K^{\le } + \varphi )- (K^{\le }+
\varphi) \circ R  =0.
%\end{equation}
\]
We rewrite this equation as
\begin{equation}
\label{eqK>lin} \LL(\varphi)
 = \FF(\varphi),
\end{equation}
where
\begin{equation}
\label{eq:linearoperator} \LL(\varphi) = (DF \circ K^{\le})\varphi -
\varphi \circ R
\end{equation}
and
\begin{equation}
\label{eq:nonlinearpart} \FF(\varphi) = -  E -F \circ
(K^{\le}+\varphi)+F \circ K^{\le} + (DF\circ K^{\le} )\varphi.
\end{equation}
The operator defined by~\eqref{eq:linearoperator} is linear. A
formal inverse is given by the formula
\begin{equation}
\label{def:G} \GG(\psi) = \sum_{j\ge 0} \left(\prod_{i=0}^j
(DF)^{-1} \circ K^{\le} \circ R^i\right) \psi \circ R^j.
\end{equation}
(see~\cite{BaldomaFdlLM07}).

As explained at the beginning of Section~4.4
in~\cite{BaldomaFdlLM07}, we make the rescaling
 $\ol y = \delta y$ for some parameter $\de$ and from now on we work in this
rescaled variable without changing the notation for $K^\leq$ and $F$.  For any
$\sigma>0$, choosing appropriately the norm in $\RR\times \RR^{n}$
and taking $a$, $\rho$ and $\delta$ small enough, we can assume that, for
some $C>0$,
\begin{equation}
\label{bound:inverseF} \|DF^{-1}(K^{\le}(t))\| \le 1+(N+\sigma) d
|t|^{N-1} + C|t|^N, \quad \text{for all $t \in V$.}
\end{equation}

\begin{lemma} \label{expandingterms} Let $\{V_i\}_{i\ge 0}$ be the family of
sets defined
in~\eqref{def:Vk} and $s>0$ the number defined in
Lemma~\ref{lem:contraction}. There exists $C>0$ such that for any
$l\ge 0$, $t\in V_l$, the following inequalities hold
\[
\left\|\prod_{i=0}^j (DF)^{-1} \circ K^{\le} \circ R^i (t) \right\|
\le C \left( \frac{s+l+j}{s+l}\right)^{(N+\sigma) \alpha d b^{-1}}
\]
\end{lemma}

\begin{proof}[Proof of Lemma~\ref{expandingterms}] By
Lemma~\ref{lem:contraction}, $R^i(V_l) \subset V_{l+i}$.
By~\eqref{bound:inverseF}, and using the definition and properties of
the sets $V_l$ in Lemma~\ref{lem:contraction}, if $t\in V_l$,
\[
\begin{aligned}
\left\| (DF)^{-1} \circ K^{\le} \circ R^i (t) \right\| & \le
1+(N+\sigma) d \left(\frac{b_{i}}{(s+i+l)^{\alpha}}\right)^{N-1} +
C\left(\frac{b_{i}}{(s+i+l)^{\alpha}}\right)^N \\
& \le 1+ \frac{(N+\sigma) \alpha d b^{-1}}{s+i+l} +
\frac{C}{(s+i+l)^{1+\gamma}},
\end{aligned}
\]
where $\gamma = \min\{\alpha,\beta\}$ (see \eqref{eq:limtsbkqk}). Then, redefining $C$,
\[
\begin{aligned}
\left\|\prod_{i=0}^j (DF)^{-1} ( K^{\le} ( R^i (t))) \right\| & \le
\exp \left(  \sum_{i=0}^j \log \left( 1+ \frac{(N+\sigma) \alpha d
b^{-1}}{s+i+l} + \frac{C}{(s+i+l)^{1+\gamma}} \right)
\right) \\
& \le \exp \left(  \sum_{i=0}^j  \left( \frac{(N+\sigma) \alpha d
b^{-1}}{s+i+l} + \frac{C}{(s+i+l)^{1+\gamma}} \right) \right) \\
& \le \exp \left(  \log \left(\frac{s+j+l}{s+l-1}\right)^{(N+\sigma)
\alpha d b^{-1}} + \frac{C}{\gamma}\left(
\frac{1}{(s+l-1)^{\gamma}}-\frac{1}{(s+l+j)^{\gamma}}
\right)\right),
\end{aligned}
\]
which implies the claim with a suitable $C$.
\end{proof}

In order to solve equation~\eqref{eqK>lin}, we introduce the space
\[
%\begin{equation}
%\label{def:X}
\XX_m = \{\varphi:V\times \wh \AAA \to \CC^{1+n}\mid
\text{$\varphi$ analytic},\; \sup_{t\in V,\lambda \in\wh \AAA}
|t^{-m} \varphi(t,\lambda)| < \infty\},
%\end{equation}
\]
which is a Banach space with the norm
\[
%\begin{equation}
\|\varphi\|_m = \sup_{t\in V,\lambda \in\wh \AAA} |t^{-m}
\varphi(t,\lambda)|.
%\end{equation}
\]
By \eqref{eq:errork}, the function~$E$ in~\eqref{eq:errork} belongs to $\XX_{k+N}$.

\begin{lemma}
\label{lem:Gbounded} Assume $m \in \NN$ satisfies $m > N+(N+\sigma)
\alpha d b^{-1}+2$. Let $\FF$ and $\GG$ be the operators defined in~\eqref{eq:linearoperator} and~\eqref{def:G}.
Then
%\begin{enumerate}
%\item \label{lem:Gboundeditem1} $\FF: \XX_{m} \to \XX_{m+N-1}$ is
%linear and injective, and
%\item
%\label{lem:Gboundeditem2}
$\GG:\XX_{m+N-1} \to \XX_{m}$ is linear,
bounded and $\LL \circ \GG = \Id$ on $\XX_{m+M-1}$.
%\end{enumerate}
\end{lemma}
\begin{proof}[Proof of Lemma~\ref{lem:Gbounded}]
%We start proving~\eqref{lem:Gboundeditem1}. Since $DF \circ K^{\le}
%(t) = \Id + \OO(t^{N-1})$ and $R(t) = t + \OO(t^N)$, if $\varphi \in
%\XX_{m}$, then $\FF(\varphi) \in \XX_{m+N-1}$. Now assume that
%$\LL(\phi)=0$, which is equivalent, if $\rho$ is small enough, to
%$\varphi = (DF)^{-1} \circ K^{\le}\varphi \circ R$. Hence, iterating
%this equality, we have that for any $j \ge 1$,
%\[
%\varphi = \prod_{i=}^j \left((DF)^{-1} \circ K^{\le} \circ
%R^i\right) \varphi \circ R^j.
%\]
%Now, it $t\in V_l$, using Lemma~\ref{expandingterms}, that $|\varphi
%\circ R^j(t)| \le |(R^j(t))^m| \|\varphi\|_m$
%inequality~\eqref{eq:contraction} and that $m>(N+\sigma)db^{1}$, we
%have
%\[
%|\varphi(t)| \le C \left( \frac{s+l+j}{s+l}\right)^{(N+\sigma)
%\alpha d b^{-1}}
%\left(\frac{b_{0}(1+\OO(l^{-\beta}))}{(s+i+j)^{\alpha}}\right)^m
%\|\varphi\|_m \xrightarrow[j\to \infty]{} 0
%\]
%which proves~\eqref{lem:Gboundeditem1}.
%
%As for~\eqref{lem:Gboundeditem2},
Let $\psi \in \XX_{m+N-1}$. For any $l\ge 0$, $t\in V_l$, by
Lemma~\ref{expandingterms}, and inequalities~\eqref{eq:contraction},
we have that for some constant $C$ independent of $\psi$ and $l$,
\[
\begin{aligned}
|\GG(\psi)(t)| & \le \sum_{j\ge 0} \left\|\prod_{i=0}^j
(DF)^{-1} \circ K^{\le} \circ R^i (t)\right\| \left| \psi \circ R^j (t)\right| \\
& \le  C  \sum_{j\ge 0} \left( \frac{s+l+j}{s+l}\right)^{(N+\sigma)
\alpha d b^{-1}} \left(
\frac{b_{0}(1+\OO(l^{-\beta}))}{(s+l+j)^{\alpha}}\right)^{m+N-1}
\|\psi\|_{m+N-1} \\
& \le \frac{C}{(s+l)^{\alpha (m+N-1) -1}}\|\psi\|_{m+N-1}
\end{aligned}
\]
Hence, since $V = \cup_{l\ge 0} V_l$,
\[
\|\GG(\psi)\|_{m}  = \sup_{l\ge 0 }\sup_{t\in V_l,\lambda \in\wh
\AAA} |t^{-m} \GG(\psi)(t)|  \le C \sup_{l\ge 0 }
\frac{(s+l+1)^{\alpha m}}{(s+l)^{\alpha (m+N-1) -1}}\|\psi\|_{m+N-1}
 \le C \|\psi\|_{m+N-1}.
\]
The above calculations show that the sums that define $\GG$ are
absolutely convergent and can be reordered if $\psi \in
\XX_{m+N-1}$. Then a simple computation shows that $\LL \circ \GG =
\Id$
\end{proof}

To complete the proof it is enough to check that the operator $\FF$
defined in~\eqref{eq:nonlinearpart} is well defined and Lipschitz
between appropriate Banach spaces with Lipschitz constant small
enough. Recall that $\FF(0) = -E \in \XX_{k+N}$.
\begin{lemma}
\label{lem:F} Let $r>0$ be such that the map $F$ is analytic  and
bounded at $B_r(0,0) \times \AAA\subset \RR\times \RR^n\times
\RR^p$, where $B_r(0,0)$ is the ball of radius $r$ centered at the
origin in $\RR\times\RR^n$. Let $\BB\subset \XX_{k+1}$ the ball of
radius~$R>0$. Assume that $\rho< (r/R)^{1/(k+1)}$. Then $\FF:\BB \to
\XX_{k+N}$ is well defined and Lipschitz with
\[
\text{\rm lip}\; \FF \le C R \rho^{k-N+1}.
\]
for some $C>0$ independent of $R$ and $\rho$.
\end{lemma}

\begin{proof}[Proof of Lemma~\ref{lem:F}]
Let $\varphi\in \BB$. It satisfies $|\varphi(t)| \le
\|\varphi\|_{k+1} |t|^{k+1} \le R \rho^{k+1} <r$. Hence, the
function $\FF(\varphi)$ is well defined and
\[
\|\FF(\varphi)\|_{k+N} \le \|E\|_{k+N}+\sup_{t\in V, \lambda \in
\AAA} C |t|^{-k-N} |\varphi(t)|^2 \le  \|E\|_{k+N}+\sup_{t\in V,
\lambda \in \AAA} C |t|^{k-N+1} \|\varphi\|_{k+1}^2 < \infty.
\]
As for the Lipschitz constant of~$\FF$, for any $\varphi,\varphi'
\in \BB$, since $k>2N-1$,
\[
\begin{aligned}
\|\FF(\varphi)-\FF(\varphi')\|_{k+N} & \le \sup_{t\in V, \lambda \in
\AAA} C |t|^{-k-N}
\max\{|\varphi(t)|,|\varphi'(t)|\}|(\varphi-\varphi')(t)|\\
& \le \sup_{t\in V, \lambda \in \AAA} C |t|^{k-N+1}
R\|\varphi-\varphi'\|_{k+1} \\
& \le CR \rho^{k-N+1} \|\varphi-\varphi'\|_{k+1}.
\end{aligned}
\]
\end{proof}

We consider the fixed point equation
\begin{equation}
\label{eq:fixedpoint} \varphi = \GG \circ \FF( \varphi).
\end{equation}
Let  $k > \min\{ 2N-1,N+(N+\sigma) \alpha d
b^{-1}+1\}$ and choose  $m=k+1$ in Lemma~\ref{lem:Gbounded}. In Lemma~\ref{lem:F}, take $R = 2
\|\GG\|\|E\|_{k+N}$ and $\rho <\min\{(r/R)^{1/(k+1)},
(R\|\GG\|)^{-1/(k-N+1)}\}$. Then, the map $\GG
\circ \FF :\BB \to \BB$ is well defined and Lipschitz with
$\text{\rm lip}\,( \GG \circ \FF) <1$. Hence,
equation~\eqref{eq:fixedpoint} has a unique solution $\varphi^* \in
\BB\subset \XX_{k+1}$. By Lemma~\ref{lem:Gbounded},
$\varphi^*$ is a solution of equation~\eqref{eqK>lin}.
%Since $\LL$ is injective, this is the only
%solution of~\eqref{eqK>lin} in $\XX_{k+1}$.

Up to this point we have found a solution of
equation~\eqref{mainformula} of the form $K^{\le} + \varphi^*$.
Since $K^{\le}$ is a polynomial of degree~$k$, it is $\CCC^{k}$ at the
origin. Also, since $\varphi^* \in \XX_{k+1}$, $\lim_{t\to 0} D^j
\varphi^*(t) = 0$, for $0\le j \le k$. Hence, $\varphi^*$ is also
$\CCC^{k}$ at the origin. Since $k$ is arbitrary and a solution of the
equation~\eqref{eq:fixedpoint} for $k'> k$ also provides
(conveniently rewritten) a solution of the equation for~$k$, taking
$\rho$ smaller, if necessary, we have that $K$ is $\CCC^{\infty}$, for
real $t$, at the origin.

This completes the proof of Theorem~\ref{thm:invariantmanifoldsmaps}.

\end{proof}

In what follows, given $T>0$, $\TT$ stands for the torus $\RR/T$.

Given a local $T$-periodic vector field $X$ from a neighborhood of
$\{(0,0,t),\;t\in\TT\} \subset \RR\times \RR^n\times \TT$ to
$\RR\times \RR^n$ such that $X(0,0,t) = (0,0)$ for all $t\in \TT$,
an open convex set $\widetilde V \subset \RR\times \RR^n$ such that
$(0,0) \in
\partial \widetilde V$ and $r>0$, we define de \emph{local stable
set of $(0,0)$ with respect to $\widetilde V$} as
%\begin{equation}
%\label{def:localstableset}
\[
W^s_{\widetilde V,r} = \{(x,y)\in
\widetilde V \cap B_r \mid \phi_{t,\tau}(x,y) \in \widetilde V \cap
B_r, \; \text{for all $t\ge 0$}, \; \lim_{t\to \infty}
\phi_{t,\tau}(x,y) = (0,0),\; \tau \in \TT\},
%\end{equation}
\]
where $B_r$ denotes the ball of radius~$r$ in $\RR\times \RR^n$ and
$\phi_{t,\tau}$ is the flow of the vector field~$X$, that is,
\[
\frac{d}{dt} \phi_{t,\tau}(x,y) = X(\phi_{t,\tau}(x,y),t),\quad
\phi_{\tau,\tau}(x,y) = (x,y).
\]

\begin{theorem} \label{thm:invariantmanifoldsflows}
Let $U \subset \RR\times \RR^n$ be a neighborhood of the origin
$(x,y)=(0,0)$, where $x\in\RR$ and $y\in \RR^n$, $\AAA \subset
\RR^p$ an open set and $X=(X^1, X^2):U \times \TT \times \AAA
\rightarrow \RR\times \RR^n $ a real analytic $T$-periodic vector field,
such that its derivatives up to order $N\ge 2$ are independent of
the parameters~$\lambda\in \AAA$ and $t$ at $(x,y)=(0,0)$, $X(0,0,t,\lambda) = 0$
%\begin{equation}
\[
D^j X(0,0,t,\lambda) = 0, \qquad \mbox{ for } \quad 1\le j\le N-1
%\label{condicio1flows}
%\end{equation}
\]
and
%\begin{equation}
%\label{condicio3flows}
\[
\frac{\partial ^N X^1 }{\partial
x^N}(0,0,t,\lambda) =: -c <0, \qquad \frac{\partial ^N X^2}{\partial
x^N}(0,0,t,\lambda) = 0,
%\end{equation}
\]
and
\[
%\begin{equation}
%\label{condicio4flows}
\text{\rm spec}\, \frac{\partial^N
X^2}{\partial x^{N-1}
\partial y}(0,0,t,\lambda) \subset \{z\in \CC \mid \Re z > 0\}.
%\end{equation}
\]

Then there exists an open convex set $\widetilde V$, with $(0,0) \in
\partial \widetilde V$, containing the line $\{x>0, y= 0\}$, $r>0$
and a $\CCC^{\infty}$ map $K: [0,s_0)\times \TT \times \AAA \subset
\RR \times \TT \times \RR^p \rightarrow \RR\times \RR^n$ of the form
$K(s,\tau,\lambda) = (s,0) + O(s^2)$, analytic in $(0,s_0)\times
\AAA$ such that the  range of $K$ is $W^s_{\widetilde V,r}$.
\end{theorem}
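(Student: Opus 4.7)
My plan is to reduce Theorem~\ref{thm:invariantmanifoldsflows} to the map version, Theorem~\ref{thm:invariantmanifoldsmaps}, by passing to the stroboscopic Poincar\'e map at a fixed section and then saturating the resulting invariant manifold by the flow to introduce the $\tau$-dependence.

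First, I consider the time-$T$ map $F:(x,y,\lambda)\mapsto\phi_{T,0}(x,y,\lambda)$ at the section $\{t=0\}$. Using the integral representation $F(x,y)=(x,y)+\int_0^T X(\phi_{s,0}(x,y),s)\,ds$ together with the hypothesis that $X$ vanishes to order $N-1$ at the origin with $N$-th derivatives independent of $t$, a routine Picard expansion yields
\[
F(x,y)=(x,y)+T\,X_N(x,y)+\OO(\|(x,y)\|^{N+1}),
\]
where $X_N$ is the order-$N$ Taylor polynomial of $X$ at the origin. From this expansion one reads off that $F$ satisfies all the hypotheses of Theorem~\ref{thm:invariantmanifoldsmaps}: $F(0,0)=0$, $DF(0,0)=\Id$, $D^jF(0,0)=0$ for $2\le j\le N-1$, $\partial^NF^1/\partial x^N(0,0)=-Tc<0$, $\partial^NF^2/\partial x^N(0,0)=0$, and the spectrum of $\partial^NF^2/(\partial x^{N-1}\partial y)(0,0)$ lies in $\{\Re z>0\}$ (it is $T$ times the spectrum of the corresponding block of $X$). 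Applying that theorem produces constants $s_0,r>0$, an open convex set $\wt V\subset\RR\times\RR^n$ with $\{x>0,y=0\}$ on its boundary at the origin, a polynomial $R(s)=s-cTs^N+\tilde c(\lambda)s^{2N-1}$, and a map $K_0:[0,s_0)\times\AAA\to\RR\times\RR^n$, $K_0(s,\lambda)=(s,0)+\OO(s^2)$, of class $\CCC^{\infty}$ in $s$ and analytic in $(0,s_0)\times\AAA$, with $F\circ K_0=K_0\circ R$ and $\mathrm{Range}\,K_0=W^s_{\wt V,r}$.

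Next I saturate by the flow and define
\[
K(s,\tau,\lambda):=\phi_{\tau,0}(K_0(s,\lambda)).
\]
Since $X$ is real analytic and $T$-periodic in $t$, the flow $\phi_{\tau,0}$ is jointly analytic in $(\tau,(x,y),\lambda)$, so $K$ is analytic on $(0,s_0)\times\TT\times\AAA$ and inherits $\CCC^{\infty}$ regularity in $s$ up to $s=0$ from $K_0$. The identity $D\phi_{\tau,0}(0,0)=\Id$, which follows from $DX(0,0,t)=0$, keeps the normalization $K(s,\tau,\lambda)=(s,0)+\OO(s^2)$. The intertwining $F\circ K_0=K_0\circ R$ translates into $\phi_{T,0}\circ K(\cdot,0,\lambda)=K(R(\cdot),0,\lambda)$, which is the discrete shadow of the desired stable manifold.

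To identify the range of $K$ with the flow stable set, I shrink $\wt V$ and $r$ so that $\phi_{\tau,0}(\wt V\cap B_r)\subset\wt V$ for every $\tau\in[0,T]$; this is possible because $\phi_{\tau,0}$ is $\CCC^1$-close to the identity on small neighborhoods of the parabolic origin, as $X$ vanishes there to order $N\ge 2$. For $(x,y)=K(s,\tau,\lambda)$ the orbit at times $\tau+kT$ equals $\phi_{\tau,0}(K_0(R^k(s),\lambda))$, which converges to $(0,0)$ since $R^k(s)\to 0$; a Gronwall estimate between consecutive stroboscopic times, combined with the polynomial contraction $|R^k(s)|\asymp k^{-1/(N-1)}$ furnished by Lemma~\ref{lem:contraction}, extends this to the full continuous-time orbit and keeps it in $\wt V\cap B_r$. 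The converse inclusion follows by pulling any stable-set point back to $\{t=0\}$ via $\phi_{-\tau,0}$ and invoking Theorem~\ref{thm:invariantmanifoldsmaps}. The main obstacle is precisely this last step: ensuring that a single convex set $\wt V$ works simultaneously for all $\tau\in\TT$ and that continuous-time orbits remain inside it while decaying at the same polynomial rate as their stroboscopic samples. Both points are handled by shrinking $\wt V$ and $r$ and appealing to the estimates already developed for the map in the proof of Theorem~\ref{thm:invariantmanifoldsmaps}.
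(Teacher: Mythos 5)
Your proof is correct, but you take a genuinely different route from the paper. The paper's proof is a one-liner: it considers the family of Poincar\'e maps $F_\tau = \phi_{\tau+T,\tau}$ at each section $\{t=\tau\}$, notes that $F_\tau$ depends analytically on both $\tau$ and $\lambda$, and applies the parametrized map result (Theorem~\ref{thm:invariantmanifoldsmaps}) with $\tau$ absorbed into the parameter space alongside $\lambda$. The resulting solution $\widetilde K(s,\lambda,\tau)$ of the invariance equation $F_\tau\circ\widetilde K = \widetilde K\circ R$ is then directly the desired parametrization, with analyticity in $\tau$ delivered automatically by the parameter dependence built into the map theorem. You instead apply Theorem~\ref{thm:invariantmanifoldsmaps} once, to the single Poincar\'e map $F_0=\phi_{T,0}$, obtain $K_0(s,\lambda)$, and then saturate by the flow, $K(s,\tau,\lambda)=\phi_{\tau,0}(K_0(s,\lambda))$. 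This is legitimate because the maps $F_\tau$ are all conjugate to $F_0$ via $\phi_{\tau,0}$, so the stable set at section $\tau$ is exactly the $\phi_{\tau,0}$-image of the one at section $0$. Your route buys a more concrete description of the $\tau$-dependence, but it costs you the extra steps you correctly flag: verifying the normalization $(s,0)+\OO(s^2)$ survives composition with $\phi_{\tau,0}$ (it does, since $DX(0,0,t)=0$ forces $D\phi_{\tau,0}(0,0)=\Id$), and arguing that a single $\widetilde V,r$ works for the continuous-time stable set, which requires interpolating the polynomial decay between stroboscopic times. The paper avoids both by putting all the $\tau$-bookkeeping inside the parameter slot of the map theorem. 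One small notational slip on your side: the pullback to $\{t=0\}$ should be written $\phi_{0,\tau}$ (or $\phi_{\tau,0}^{-1}$) rather than $\phi_{-\tau,0}$, but this does not affect the argument.
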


\begin{proof}
Let $F_{\tau}$ be the Poincar\'{e} map of the vector field~$X$ associated to
the section $\{t = \tau\}$. It depends analytically on $\tau$ and
$\lambda$. A simple computation shows that it satisfies the
hypotheses of Theorem~\ref{thm:invariantmanifoldsmaps}. Let
$\widetilde K (s,\lambda,\tau)$ be the solution of the invariance
equation
\[
F_{\tau} \circ \widetilde K (s,\lambda,\tau) = \widetilde K
(R(s,\lambda,\tau),\lambda,\tau)
\]
provided by Theorem~\ref{thm:invariantmanifoldsmaps}. Its range is
precisely $W^s_{\widetilde V,r}$.
\end{proof}

\section{Formulas for the scattering maps of the circular
problem}\label{app:scattering}
In this section we obtain formulas for the scattering map of the
circular problem.
As the scattering maps $\wt \SSS ^\pm _0$ are defined in the extended phase space and send $s$ to $s$
 \cite{DelshamsLS08}, they can be restricted to the section $\Sigma$ (see
\eqref{def:Poincare_map}) giving rise to symplectic maps $ \SSS ^\pm _0$
 on the cylinder $\Lambda^{[G_1,+\infty]}$. Recall that the result in  \cite{DelshamsLS08} deals
with a normally hyperbolic cylinder, whereas
 in our case the manifold $\Lambda$ is a ``normally parabolic invariant cylinder''
with stable and unstable manifolds.
 The arguments in \cite{DelshamsKRS14} extend the proof to this case.
 Moreover, as it is explained
in \cite{DelshamsKRS14}, the conservation of the Jacobi constant implies that
$G$ is a first integral of the two scattering maps.

Since the scattering maps $ \SSS ^\pm _0$ are symplectic and $G$ is
preserved, they must be of the form~\eqref{def:scatteringcircular}.
Thus, it only remains to obtain formulas for the functions $f^\pm$.
To compute these functions, we use  \cite{DelshamsLS08}. Note that
this paper provides formulas for the scattering map in terms of the
Poincar\'e potential in a regular perturbation regime. The regular perturbative
approach used in~\cite{DelshamsKRS14} is to consider system~\eqref{eq:3bpInMcGehee} as an
order~$\mu$ perturbation of the two body problem. As has been explained in
Section \ref{sec:intro}, here we use a different nearly integrable regime,
also considered in~\cite{GuardiaMS14} for the
circular problem, which allow us to deal with  arbitrary values of~$\mu$. We
proceed as follows.

Recall that we already know that the scattering map is defined for
$\al\in\TT$ and $G\geq G_1$. To provide formulas for $f^\pm$ we need
to rescale the variables to transform the problem into a
perturbative setting. Consider the scaling
\[
 x = G_1^{-1} \wt x,\,\, y=G_1\ii \wt y,\,\,\al=\wt\al, \,\,G=G_1\wt G
\]
and the change of time $s = G_1^3 t$. It transforms
system~\eqref{eq:3bpInMcGehee} for $e_0=0$ into a system of the same
form with a new potential
\[
\wt \UU(\wt x, \wt \al,t; G_1) = G_1^2 \UU (G_1^{-1} \wt
x,\wt\al,G_1^3 t;0) = \frac{\wt x^2}{2} + \Delta \wt \UU(\wt x, \wt
\al, t; G_1)
\]
where $\Delta \wt \UU(\wt x, \wt \al, t; G_1)= G_1^2 \Delta \UU (G_1^{-1} \wt
x,\wt\al,t; G_1)$ and is given by
\[
\Delta \wt \UU(\wt x, \wt \al, t; G_1)=\frac{\wt x^2}{2}
\left(\frac{1-\mu}{\left(1-\frac{\mu }{G_1^2}\wt x^2
\cos \phi+(\frac{\mu }{2G_1^2}\wt x^2)^2\right)^{1/2}}
 +\frac{\mu}{\left(1+\frac{1-\mu }{G_1^2}\wt x^2\cos\phi+(\frac{1-\mu
}{2G_1^2}\wt x^2)^2\right)^{1/2}}
 -1\right)
\]
with $\phi =\wt \al-G_1^3 t$.
Expanding $\Delta \wt \UU$ in $G_1^{-1}$ one can see that it
%depends on $G_1$
can be written as $\Delta \wt \UU=G_1^{-4}\Delta \VV$ where
$\Delta \VV$ is bounded uniformly on~$G_1\to +\infty$. Therefore,
system~\eqref{eq:3bpInMcGehee} with $e_0=0$ in these rescaled variables can be
seen as a $\OO(G_1^{-4})$-perturbation of the two body problem.
Nevertheless, this limit is singular since $\Delta \wt \UU$, and
therefore $\Delta \VV$, is $2\pi/G_1^3$-periodic in time. When
$G_1\rightarrow+\infty$, the frequency of the perturbation blows
up. Hence, we need to adapt the theory developed in
\cite{DelshamsLS08} to our setting, along the lines
in~\cite{DelshamsLS00}

To transform this problem into a regular perturbation one, we introduce
$\de=G_1^{-4}$, which  we consider as an independent parameter.
Later we will recover the true value of $\delta$. We write the
potential $\wt \UU$ as
\[
\frac{\wt x^2}{2} +\de\Delta \VV
\]
%We define
%\begin{equation}\label{def:FirstOrder}
%L(v,\xi;\mu,G_0)=\int_{-\infty}^{+\infty} V(\wt r_\h(v+s),\xi-G_0^3
%s+ \wt \al_\h(v+s);\mu,G_0)ds,
%\end{equation}
%and we compute the generating
%functions associated to the scattering maps, which only depend on $G$. Since
%the
%circular problem ?? is not regular in $G_0$, we use the usual trick of
%considering two independent parameters. Calling $\delta=G_0^{-4}$. Then, The
%Hamiltonian ?? can be written as
%$H=H_0+\de \wt V$,
and, then, system~\eqref{eq:3bpInMcGehee} is a
$O(\delta)$-perturbation, $2\pi/G_1^3$-periodic in time, of an
integrable system with a ``normally parabolic'' invariant manifold
$\wt \Lambda^{[G_1,+\infty)}$ with a $3$-dimensional homoclinic manifold that can be
parameterized by the homoclinic orbits
\[
\begin{aligned}
%Z_h( t; \wt \al_0, \wt G_0, \wt s_0)
%
%& = (\wt x_\h(G_1^3t;\wt G_0), \wt y_\h(G_1^3t;\wt G_0),\wt
%\al_\h(G_1^3t;\wt \al_0, \wt G_0), \wt G_\h(G_1^3t;\wt G_0),\wt
%s_\h(G_1^3t;\wt s_0))
%\\
%& =
( x_\h(t;\wt G_0),  y_\h(t;\wt G_0), \al_\h(t;\wt \al_0, \wt
G_0), G_\h(t;\wt G_0),\wt s_0 +G_1^3 t),
\end{aligned}
\]
for any $\wt G_0$, $\wt \al _0$, $\wt s_0$. Here $(x_\h,y_\h,\al_\h,G_\h,s_\h)$ is the homoclinic orbit given
in Lemma~\ref{lemma:UnperturbedSeparatrix} and we have changed the
last component to take into account the period of the perturbation.
%\[
%\wt \gamma(t,\al_0,\wt G) = \left(\frac{2}{\wt
%G(1+\tau^2)^{1/2}},\frac{2\tau}{\wt G(1+\tau^2)},\al_0+ 2\arctan
%\tau\right), \quad t=\frac{\wt
%G^3}{2}\left(\tau+\frac{\tau^3}{3}\right).
%\]

Since now system~\eqref{eq:3bpInMcGehee} is a $\delta$-regular perturbation
of an integrable system with a homoclinic manifold, we can use the
perturbative arguments (using deformation theory) in Theorem 32
of~\cite{DelshamsLS08}. We consider the Poincar\'e function
associated to the homoclinic manifold
\[
\begin{aligned}
 \wt L\left(\wt \al_0,\wt G_0,\wt s_0, \sigma\right) & =
\int_{-\infty}^{\infty} \Delta \VV\left(x_\h(t+\sigma;\wt G_0),\al_\h(t+\sigma;\wt\al_0,\wt G_0),\wt s_0 + G_1^3t\right)\, dt \\
& = G_1^4\int_{-\infty}^{\infty} \Delta \wt \UU\left(x_\h(t+\sigma;\wt G_0), \al_\h(t+\sigma;\wt \al_0,\wt G_0),\wt s_0 + G_1^3t\right)\, dt
\\
& = G_1^6\int_{-\infty}^{\infty}
\Delta \UU\left(G_1 ^{-1}x_\h(t+\sigma;G_1\wt G_0) , \al_\h(t+\sigma;\wt\al_0,\wt G_0),\wt  s_0 + G_1 ^3 t\right)\, dt \\
& = G_1^3\int_{-\infty}^{\infty}
\Delta \UU\left(G_1 ^{-1}x_\h\left( \frac{r+G_1 ^3\sigma}{G_1^3} ;\wt G_0\right) ,
\al_\h\left(\frac{r+G_1 ^3\sigma}{G_1^3};\wt\al_0,\wt G_0\right),\wt  s_0 + r\right)\, dr \\
& = G_1^3\int_{-\infty}^{\infty}
\Delta \UU\left(x_\h\left(r+G_1 ^3\sigma ;G_1 \wt G_0\right) , \al_\h\left(r+G_1
^3\sigma;\wt\al_0,G_1\wt G_0\right),\wt  s_0 + r\right)\, dr \\
& = G_1^3 L\left(\wt \al_0,G_1\wt G_0,\wt s_0,G_1^3\sigma;0\right)=
 G_1^3 L\left(\wt \al_0,G_1\wt G_0,\wt s_0-G_1^3\sigma,0;0\right),
\end{aligned}
\]
where we have used that the formulas in  Lemma \ref{lemma:UnperturbedSeparatrix}
imply
\[
G_1 ^{-1}x_\h\left( \frac{u}{G_1^3};\wt G_0\right)=x_\h\left(u;G_1\wt G_0\right), \
\al_\h\left(\frac{u}{G_1^3};\al_0,\wt G_0\right)= \al_\h\left(u;\al_0,G_1\wt G_0\right),
\]
and $L$ is the function  introduced in~\eqref{def:poincarepotential}. The
corresponding reduced Poincar\'e functions are obtained by evaluating
the function $\sigma\mapsto \wt L(\wt \al_0,\wt G,\wt s_0, \sigma)$ at its
nondegenerate critical points $\sigma^*_{\pm}$, given by $\wt s_0-\wt \al _0-G_1^3\sigma^* _-=0$ and $\wt s_0-\wt \al _0 +\pi-G_1^3\sigma^* _+=0$
(see~\eqref{def:sigmastarpm}). They are defined as
\[
\begin{aligned}
\wt \LL_-^*(\wt  \al_0,\wt G_0)&=\wt L(\wt \al_0,\wt
G_0,\wt s_0,G_1^{-3}(\wt s_0-\wt  \al_0))= G_1^3 L(\wt \al_0,G_1\wt G_0,\wt  \al_0,0;0)\\
 \LL_+^*(\wt  \al_0,\wt G_0)&=\wt L(\wt \al_0,\wt G_0,\wt s_0,G_1^{-3}(\pi+\wt s_0-\wt  \al_0))= G_1^3 L(\wt \al_0,G_1\wt G_0,\wt  \al_0-\pi,0;0).
\end{aligned}
\]
Since $L$, and, consequently, $\wt L$,  only depend on the angles
through $\al_0-s_0+\sigma$, $\wt \LL_\pm^*$ only depend on $ G_1 \wt G_0$.
\[
\begin{aligned}
\wt \LL_-^*( G_1\wt G_0)&=  G_1^3 L(0,G_1\wt G_0,0,0;0)\\
 \LL_+^*( G_1 \wt G_0)&= G_1^3 L(\pi,G_1\wt G_0,0,0;0).
\end{aligned}
\]

Then, the generating function of the scattering map for the rescaled system is given by
\begin{equation}\label{def:GeneratingFunction}
 S_\mathrm{resc}^\pm(\wt \al_0,\wt G_0)= \wt G_0\wt \al_0+\de\wt \LL_\pm^*( G_1 \wt G_0)+\OO(\de^2).
\end{equation}
%where the functions $\wt \LL^*_\pm$ are the reduced Poincar\'e
%functions associated to the homoclinic manifolds $\Ga_\pm=\cup_{\wt
%\al,\wt G,G_1^3s_0}\Gamma_\pm^{\wt \al,\wt G,G_1^3s_0}$. They are
%defined by
%\[
%\begin{split}
% \LL_-^*(\wt G)=\LL_-^*(\wt \al,\wt G,G_1^3s_0)&=L(\wt \al,\wt
%G,G_1^3s_0,-(\wt \al-G_1^3s_0))\\
% \LL_+^*(\wt G)&=L(\wt \al,\wt G,G_1^3s_0,\pi-(\wt \al-G_1^3s_0))
%\end{split}
%\]
%Since $L$ only depends on the angles through $\al_0-s_0+\tau$,
%$\LL_\pm^*$ only depend on $G_0$. Namely, they do not depend on
%$\al_0$ nor on $s_0$.
%
Using the results in \cite{GuardiaMS14}, we have that
\[
 \LL_\pm^*( G_1 \wt G_0)=
 %\LL_\pm^*(\wt\al,\wt G,\wt s_0)=
G_1^3 L_0( G_1 \wt G_0)\pm G_1^3 L_{1,-1}( G_1 \wt G_0)+\OO\left(G_1^4(G_1\wt G_0)^{-\frac{3}{2}}e^{-2\frac{(G_1\wt
G_0)^3}{3}}\right)
\]
where
\[
\begin{split}
 L_0( G_1 \wt G_0)&=\mu(1-\mu)\frac{\pi}{2 ( G_1 \wt G_0)^3}\left(1+\OO((G_1\wt G_0)^{-4})\right)\\
L_{1,-1}( G_1 \wt G_0)&=%G_1^4
\mu(1-\mu)(1-2\mu)\sqrt{\frac{\pi}{8(G_1\wt
G_0)}}e^{-\frac{(G_1\wt G_0)^3}{3}}
\left(1+\OO((G_1\wt G_0)^{-1})\right).
\end{split}
\]
Using \eqref{def:GeneratingFunction}, we have that
the rescaled scattering map of the circular problem is of the form
\[
 \SSS_\mathrm{resc}^{\pm}\begin{pmatrix} \wt\al_0\\\wt G_0\end{pmatrix}=
 \begin{pmatrix}\wt\al_0+\wt f^\pm (\wt G_0)\\ \wt G_0\end{pmatrix}.
\]
with
\[
\wt f^\pm \left(\wt G_0\right)=\de\pa_{\wt G_0} \LL_\pm^*(G_1 \wt G_0)+\OO\left(\de^2\right)=
-\de\mu(1-\mu)\frac{3\pi}{2\wt G_0^4}+\OO\left(\de G_1^{-4}\wt
G^{-8}\right)+\OO\left(\de^2\right).
\]
where $\OO$ refers to uniform bounds $\wt G_0\in [1,+\infty)$.

We recover the value of $\de=G_1^{-4}$ and we scale back the
variables. Then, we obtain formula~\eqref{def:scatteringcircular}
with
\[
f^\pm (G)
%G_1^{4}\pa_{\wt G}\LL_\pm^*(G/G_1)+\OO\left(G_1^{-8}\right)
= -\mu(1-\mu)\frac{3\pi}{2 G^4}+\OO\left( G_1^{-8}\right).
\]
To recover the claim of
Proposition~\ref{prop:ScatteringMapCircular}, we need to replace the
error term~$\OO\left( G_1^{-8}\right)$  by~$\OO\left(
G^{-8}\right)$. To do so, we consider the above procedure restricted
to compact sets. That is, for any $k\in \NN$, we choose $G\in [k
G^*, (k+1)G^*]$ and thus we take $\delta_k = G_1^{-4} = (k G^*
)^{-4}$. Then, for any $G\in [k G^*, (k+1)G^*]$, $\delta_k < C
G^{-4}$, for some $C>0$ independent of~$k$.

\bibliography{references}
\bibliographystyle{alpha}
\end{document}